\theoremstyle{plain}
\newtheorem{theorem}{Theorem}[section]
\newtheorem{proposition}[theorem]{Proposition}
\newtheorem{lemma}[theorem]{Lemma}
\newtheorem{corollary}[theorem]{Corollary}
\theoremstyle{definition}
\newtheorem{definition}[theorem]{Definition}
\theoremstyle{remark}
\newtheorem{remark}[theorem]{Remark}
\newtheorem{example}[theorem]{Example}
\newcommand{\Mor}{\mathrm{Mor}}
\renewcommand{\lim}{\mathrm{lim}}
\newcommand{\Ext}{\mathrm{Ext}}
\newcommand{\Hom}{\mathrm{Hom}}
\newcommand{\Def}{\mathrm{Def}}
\newcommand{\Ob}{\mathrm{Ob}}
\newcommand{\AAA}{\mathfrak{a}}
\newcommand{\BBB}{\mathfrak{b}}
\newcommand{\CC}{\mathbf{C}}
\newcommand{\Mod}{\ensuremath{\mathsf{Mod}} }
\newcommand{\Fun}{\ensuremath{\mathsf{Fun}}}
\newcommand{\lra}{\longrightarrow}
\newcommand{\aaa}{\ensuremath{\mathcal{A}}}
\newcommand{\bbb}{\ensuremath{\mathcal{B}}}
\newcommand{\ccc}{\ensuremath{\mathcal{C}}}
\newcommand{\fff}{\ensuremath{\mathcal{F}}}
\newcommand{\GGG}{\ensuremath{\mathcal{G}}}
\newcommand{\nnn}{\ensuremath{\mathcal{N}}}
\newcommand{\ppp}{\ensuremath{\mathcal{P}}}
\newcommand{\uuu}{\ensuremath{\mathcal{U}}}
\newcommand{\h}{\mathrm{Hoch}}
\newcommand{\s}{\mathrm{simp}}
\newcommand{\cgs}[1]{\CC_\mathrm{GS}^{#1}(\aaa,M)}
\newcommand{\cgsn}[1]{\bar{\CC}_\mathrm{GS}^{#1}(\aaa,M)}
\newcommand{\cgsnr}[1]{\bar{\CC}_\mathrm{GS}'^{#1}(\aaa,M)}
\newcommand{\ctil}[1]{\CC_\uuu^{#1}(\tilde{\aaa},\tilde{M})}
\newcommand{\Part}[1]{\mathrm{Part}(#1)}
\newcommand{\Seq}{\mathrm{Seq}}
\newcommand{\Seqq}{\mathrm{Seqq}}
\newcommand{\simp}{\mathrm{simp}}
\numberwithin{equation}{section}
\title{The Gerstenhaber-Schack complex for prestacks}
\author{ Dinh Van Hoang}
\address[ Dinh Van Hoang]{Universiteit Antwerpen, Departement Wiskunde-Informatica, Middelheimcampus,
Middelheimlaan 1,
2020 Antwerp, Belgium}
\email{hoang.dinhvan@uantwerpen.be}
\author{Wendy Lowen}
\address[Wendy Lowen]{Universiteit Antwerpen, Departement Wiskunde-Informatica, Middelheimcampus,
Middelheimlaan 1,
2020 Antwerp, Belgium}
\email{wendy.lowen@uantwerpen.be}
\thanks{The authors acknowledge the support of the European Union for the ERC grant No 257004-HHNcdMir and the support of the Research Foundation Flanders (FWO) under Grant No G.0112.13N}
\begin{document}
\maketitle

\begin{abstract}
	Building on the work of Gerstenhaber and Schack for presheaves of algebras, we define a Gerstenhaber-Schack complex $\CC^{\bullet}_{\mathrm{GS}}(\aaa)$ for an arbitrary prestack $\aaa$, that is a pseudofunctor taking values in linear categories over a commutative ground ring. In the general case, the differential is no longer simply the sum of Hochschild and simplicial contributions as in the presheaf case, but contains additional higher components as well. If $\tilde{\aaa}$ denotes the Grothendieck construction of $\aaa$, which is a $\uuu$-graded category, we explicitly construct inverse quasi-isomorphisms between $\CC^{\bullet}_{\mathrm{GS}}(\aaa)$ and the Hochschild complex $\CC_{\uuu}(\tilde{\aaa})$. As the Homotopy Transfer Theorem applies to our construction, one can transfer the dg Lie structure present on the Hochschild complex in order to obtain an $L_{\infty}$-structure on $\CC^{\bullet}_{\mathrm{GS}}(\aaa)$, which controlls the higher deformation theory of the prestack $\aaa$.
\end{abstract}

\section{introduction}

Throughout the introduction, let $k$ be a field. In \cite{gerstenhaberschack}, \cite{gerstenhaberschack1}, \cite{gerstenhaberschack2} Gerstenhaber and Schack define the Hochschild cohomology of a presheaf $\aaa$ of $k$-algebras over a poset $\uuu$  as an Ext of bimodules
$HH^n(\aaa) = \Ext^n_{\aaa - \aaa}(\aaa, \aaa)$,
in analogy with the case of $k$-algebras. They construct a complex $\CC^{\bullet}_{\mathrm{GS}}(\aaa)$ which computes this cohomology, obtained as the totalization of a double complex with horizontal Hochschild differentials  and vertical simplicial differentials. From $\aaa$, they construct a single $k$-algebra $\aaa !$ such that 
\begin{equation}\label{aaa!}
	HH^n(\aaa) \cong HH^n(\aaa !)
\end{equation}
for the standard Hochschild cohomology of $\aaa !$ on the right hand side. Further, the authors construct two explicit cochain maps
\begin{equation}\label{tau}
	\tau: \CC^{\bullet}_{\mathrm{GS}}(\aaa) \lra \CC^{\bullet}(\aaa !) \hspace{0,5cm} \text{and} \hspace{0,5cm} \hat{\tau}:\CC^{\bullet}(\aaa !) \lra  \CC^{\bullet}_{\mathrm{GS}}(\aaa)
\end{equation}
relating their complex $\CC^{\bullet}_{\mathrm{GS}}(\aaa)$ to the Hochschild complex $\CC^{\bullet}(\aaa !)$, which they prove to be inverse quasi-isomorphisms. They present two essentially different approaches to \eqref{aaa!}, \eqref{tau} and the relationship between the two:
\begin{enumerate}
	\item[(A1)] In a first approach \cite{gerstenhaberschack}, \cite{gerstenhaberschack1}, \eqref{aaa!} follows from their (difficult) Special Cohomology Comparison Theorem (SCCT) which compares more general bimodule Ext groups. Both sides of \eqref{aaa!} are particular cases of such $\Ext$ groups, and a universal delta functor argument shows that the isomorphism \eqref{aaa!} is actually induced by the map $\tau$ in \eqref{tau}, whence the latter is a quasi-isomorphism. 
	\item[(A2)] In a second approach \cite{gerstenhaberschack2}, in case $\uuu$ is a finite poset, the authors focus on the compositions $\hat{\tau} \tau$ and $\tau \hat{\tau}$. They prove directly that $\hat{\tau} \tau = 1$, and based upon the $\Ext$ interpretation of the cohomology of $\CC(\aaa !)$, after extending $\hat{\tau} \tau$ to a natural transformation on $\CC^{\bullet}(\aaa !, -)$, a universal delta functor argument shows that $H^{\bullet}(\hat{\tau} \tau) = 1$. Thus, in this case the isomorphism \eqref{aaa!} follows without invoking the SCCT.
\end{enumerate}
Unlike in the algebra case, there is no perfect match between $HH^2(\aaa)$ and deformations of $\aaa$ as a presheaf. However, it turns out that $HH^2(\aaa)$ naturally parametrizes deformations of $\aaa$ \emph{as a twisted presheaf}, as is seen from direct inspection of the complex $\CC^{\bullet}_{\mathrm{GS}}(\aaa)$ \cite{dinhvanliulowen}.

Another way to understand the occurence of twists is by viewing a presheaf of algebras as a prestack, that is a pseudofunctor taking values in $k$-linear categories (algebras are considered as one object categories). If $\aaa$ is a prestack over a small category $\uuu$, then $\aaa$ has an associated $\uuu$-graded category $\tilde{\aaa}$, obtained through a $k$-linear version of the Grothendieck construction from \cite{SGA1}.  If $\aaa$ is a presheaf over a poset, then $\tilde{\aaa}$ and $\aaa!$ are closely related.  In \cite{lowenmap} it was shown based upon the construction of $\tilde{\aaa}$ that the appropriate $\uuu$-graded Hochschild complex $\CC^{\bullet}_{\uuu}(\tilde{\aaa})$ of $\tilde{\aaa}$ satisfies
\begin{equation}\label{Extuuu}
	H^n\CC_{\uuu}(\tilde{\aaa}) = \Ext^n_{\tilde{\aaa} - \tilde{\aaa}}(\tilde{\aaa}, \tilde{\aaa})
\end{equation}
and controls deformations of $\tilde{\aaa}$ as a $\uuu$-graded category ($\mathrm{Def}_{\uuu}(\tilde{\aaa})$) and, equivalently, deformations of $\aaa$ as a prestack ($\mathrm{Def}_{\mathrm{pre}}(\aaa)$). Further, in \cite{lowenvandenberghCCT}, Lowen and Van den Bergh prove a Cohomology Comparison Theorem (CCT) for prestacks $\aaa$. If we define $HH^n(\aaa) = \Ext^n_{\aaa - \aaa}(\aaa, \aaa)$ and $HH^n_{\uuu}(\tilde{\aaa}) = \Ext^n_{\tilde{\aaa} - \tilde{\aaa}}(\tilde{\aaa}, \tilde{\aaa})$, it follows in particular from the CCT that 
\begin{equation}\label{CCT2}
	HH^n(\aaa) \cong HH^n_{\uuu}(\tilde{\aaa}),
\end{equation}
that is, the analogue of \eqref{aaa!} holds. 

All of the above suggests that it is most natural to work at once in the context of arbitrary prestacks $\aaa$. In particular, it should be possible to define a Gerstenhaber-Schack complex $\CC^{\bullet}_{\mathrm{GS}}(\aaa)$ which is directly seen to controll prestack deformations of $\aaa$, and such that we can modify the inverse quasi-isomorphisms \eqref{tau} above to this setup. Realizing this is the main goal of this paper. In summary, we have the following picture of the references in which various relations are studied for a prestack $\aaa$, where $[\ast]$ stands for the present paper:
$$\xymatrix{
	{\Ext_{\aaa - \aaa}(\aaa, \aaa)} \ar@{~}[d]_{[11]} & {\CC^{\bullet}_{\mathrm{GS}}(\aaa)} \ar@{~}[r]^{[\ast]} \ar@{~}[d]_{[\ast]} & {\mathrm{Def}_{\mathrm{pre}}(\aaa)} \ar@{~}[d]^{[10]} \\
	{\Ext_{\tilde{\aaa} - \tilde{\aaa}}(\tilde{\aaa}, \tilde{\aaa})} \ar@{~}[r]_{[10]} & {\CC^{\bullet}_{\uuu}(\tilde{\aaa})} \ar@{~}[r]_{[10]} & {\mathrm{Def}_{\uuu}(\tilde{\aaa})} }$$

The content of the paper is as follows. After recalling basic terminology on prestacks and map-graded categories in \S \ref{parparpremap}, the complex $\CC^{\bullet}_{\mathrm{GS}}(\aaa)$ for a prestack $\aaa$ on a small category $\uuu$ is defined in \S \ref{parparGS}. As a graded module, according to \eqref{double}, $\CC^{\bullet}_{\mathrm{GS}}(\aaa)$ is the totalization of a double object which is a minor modification of the one in the presheaf case. Precisely, we put
$$\CC^{p,q}(\aaa) = \prod \Hom_k\big(\aaa(U_p)(A_{q-1}, A_{q}) \otimes \dots \otimes \aaa(U_p)(A_{0}, A_{1}),\aaa(U_0)( \sigma^{\star}A_0,\sigma^{\ast}A_q)\big).$$
Here, the product is taken over all $p$-simplices 
\begin{equation}\label{eqsigma00}
	\sigma = (\xymatrix{ {U_0} \ar[r]_-{u_1} & {U_{1}} \ar[r]_-{u_2} & {\dots} \ar[r]_-{u_{p-1}} & {U_{p-1}} \ar[r]_{u_p} & {U_p}})
\end{equation}
in the nerve of $\uuu$ and all $(q+1)$-tuples $(A_0, \dots, A_q)$ of objects in $\aaa(U_p)$. Further, if we denote, for $u: V \lra U$ in $\uuu$, the associated restriction functor by $u^{\ast}: \aaa(U) \lra \aaa(V)$, then we put $\sigma^{\ast} = (u_p \dots u_2u_1)^{\ast}$ and $\sigma^{\star} = u_1^{\ast}u_2^{\ast} \dots u_p^{\ast}$.

On the other hand, in \eqref{newdifin} we have to introduce a new, more complicated differential
\begin{equation}\label{newdifinin}
	d=d_0+d_1+\dots+d_n:\CC_{\mathrm{GS}}^{n-1}(\aaa)\lra \CC_{\mathrm{GS}}^n(\aaa)
\end{equation}
where $d_0=d_\mathrm{Hoch}$ for the horizontal Hochschild differential $d_\mathrm{Hoch}$ and $d_1=(-1)^nd_\mathrm{simp}$ for the vertical simplicial differential $d_\mathrm{simp}$.
The additional components $d_j$ of $d$, given in \eqref{defnewdif}, are necessary to make the differential square to zero, as is shown in Theorem \ref{propnewdif}. Note that the algebraic structure of the prestack $\aaa$ naturally corresponds to an element
$$(m, f, c) \in \CC^{0,2}(\aaa) \oplus \CC^{1,1}(\aaa) \oplus \CC^{2,0}(\aaa) = \CC^2_{\mathrm{GS}}(\aaa)$$
with $m$ encoding compositions, $f$ encoding restrictions, and $c$ encoding twists. Our definition of the components $d_j$ ensures the following desired result (Theorem \ref{proptwist}), of which the proof makes use of normalized reduced cochains as defined in \S \ref{parnormred}:
\begin{theorem}
The second cohomology group $H^2\CC_{\mathrm{GS}}(\aaa)$ classifies first order deformations of $\aaa$ as a prestack.
\end{theorem}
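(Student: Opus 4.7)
The plan is to match first-order deformations of $\aaa$ as a prestack, modulo equivalence, with cohomology classes in $H^2\CC_{\mathrm{GS}}(\aaa)$. A deformation of $\aaa$ over $k[\epsilon]/(\epsilon^2)$ consists of a triple $(m', f', c')$ deforming respectively the compositions, the restrictions, and the pseudofunctoriality 2-cells of $\aaa$. These three data live naturally in $\CC^{0,2}(\aaa)$, $\CC^{1,1}(\aaa)$, $\CC^{2,0}(\aaa)$, whose direct sum is $\CC^{2}_{\mathrm{GS}}(\aaa)$, mirroring the decomposition in which the undeformed structure $(m,f,c)$ of $\aaa$ itself sits.

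First I would write out, modulo $\epsilon^2$, the prestack axioms for $(m+\epsilon m',\, f+\epsilon f',\, c+\epsilon c')$: associativity in each fibre $\aaa(U)$, functoriality of each restriction $u^{\ast}$, naturality of the twists $c_{u,v}$ in their arguments, and the cocycle identity on $c$ for composable triples in $\uuu$. Since $(m,f,c)$ already satisfies these axioms, the zeroth-order parts vanish and the coefficients of $\epsilon$ give four linear equations on $(m',f',c')$, taking values in the summands $\CC^{0,3}$, $\CC^{1,2}$, $\CC^{2,1}$, $\CC^{3,0}$ of $\CC^{3}_{\mathrm{GS}}(\aaa)$. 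A careful matching shows that these four equations assemble into the single equation $d(m',f',c')=0$ for the differential \eqref{newdifinin}. In particular, the higher components $d_j$ with $j\ge 2$ --- absent in the classical Gerstenhaber--Schack presheaf case --- are exactly what is needed to absorb the first-order interaction of $(m',f',c')$ with the preexisting nontrivial twist $c$.

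For equivalences, an equivalence between two first-order deformations is a pseudonatural isomorphism of prestacks over $k[\epsilon]/(\epsilon^2)$ reducing to the identity modulo $\epsilon$. Writing its fibrewise and pseudonaturality components as $\mathrm{id}+\epsilon\phi$ and $\mathrm{id}+\epsilon\psi$ respectively, one extracts a pair $(\phi,\psi)\in \CC^{0,1}(\aaa)\oplus \CC^{1,0}(\aaa)=\CC^{1}_{\mathrm{GS}}(\aaa)$. Expanding the intertwining condition to first order in $\epsilon$ then produces exactly the equality $(m'_1, f'_1, c'_1)-(m'_2, f'_2, c'_2)=d(\phi,\psi)$. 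At this point it is technically convenient to pass to the reduced normalized subcomplex of \S \ref{parnormred}, so that one may assume $\phi$ vanishes on identity morphisms and $\psi$ on identity restrictions; this removes a layer of routine bookkeeping.

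The main obstacle is the meticulous verification that the first-order expansion of the nonlinear coherence axioms reproduces the components of $d=d_0+d_1+\dots+d_n$ term by term, with correct signs and proper attention to the twisting restrictions $\sigma^{\ast}$ and $\sigma^{\star}$ from \eqref{eqsigma00} attached to cochains of each bidegree. The delicate issue is the interference between $m'$ or $f'$ and the preexisting twist $c$, which is precisely what forces the higher $d_j$ to contribute; once all such contributions are correctly accounted for, both in the cocycle and in the coboundary calculations, the classification theorem follows formally.
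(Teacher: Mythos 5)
Your direct computation is the same as the paper's: expanding the prestack axioms of $(m+\epsilon m_1,f+\epsilon f_1,c+\epsilon c_1)$ to first order does produce four equations in the bidegrees $(0,3)$, $(1,2)$, $(2,1)$, $(3,0)$, which are exactly the components $d_\h m_1$, $d_\h f_1-d_\s m_1$, $d_\h c_1-d_\s f_1+d_2 m_1$, $-d_\s c_1+d_2 f_1+d_3 m_1$ of $d(m_1,f_1,c_1)$, and the first-order expansion of an equivalence $(1+g_1\epsilon,1+\tau_1\epsilon)$ gives the coboundary relation. However, there is a genuine gap in how you pass from this to the cohomology of the \emph{full} complex $\CC_{\mathrm{GS}}(\aaa)$. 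A first order deformation must also satisfy the unit axioms: $1^{U}$ must remain an identity for $\bar{m}^U$, $\bar{f}^u(1_A)=1_{u^*A}$ and $\bar{f}^{1_U}=1_U$, and $\bar{c}^{u,v}=1$ whenever $u$ or $v$ is an identity. These force $m_1$, $f_1$, $c_1$ to vanish on identities and on degenerate simplices, i.e.\ they force $(m_1,f_1,c_1)$ to be a \emph{normalized reduced} cochain, and similarly $(g_1,-\tau_1)$ for equivalences. So the direct argument only yields a bijection between deformation classes and $H^2\bar{\CC}'_{\mathrm{GS}}(\aaa)$, the second cohomology of the normalized reduced subcomplex --- not of $\CC_{\mathrm{GS}}(\aaa)$. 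An arbitrary $2$-cocycle of $\CC_{\mathrm{GS}}(\aaa)$ does not itself define a deformation, and two normalized reduced cocycles could a priori be cohomologous in the big complex without the cobounding $1$-cochain being normalized reduced.

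You dismiss this as ``technically convenient bookkeeping,'' but in this setting the comparison $\bar{\CC}'_{\mathrm{GS}}(\aaa)\hookrightarrow \CC_{\mathrm{GS}}(\aaa)$ being a quasi-isomorphism is a substantive separate ingredient (Propositions \ref{inclusion1} and \ref{normalreducefinal} in the paper): because of the higher components $d_j$, $j\ge 2$, of the new differential, the usual spectral sequence argument from the presheaf case breaks down and the paper has to use a double filtration together with Lemmas \ref{normalized0} and \ref{reduced1}. Without either proving this quasi-isomorphism or explicitly invoking it, your argument does not establish the statement as formulated, namely that $H^2\CC_{\mathrm{GS}}(\aaa)$ classifies first order deformations; it only establishes the classification by the normalized reduced cohomology. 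With that ingredient added, your outline coincides with the paper's proof.
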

The definition of the higher components $d_j$ is combinatorial in nature. It makes essential use of the following ingredients:
\begin{itemize}
	\item So called \emph{paths} of natural transformations between $\sigma^{\star}$ and $\sigma^{\ast}$, each path building up a $(p-1)$-simplex in the nerve of $\Fun(\aaa(U_p), \aaa(U_0))$ by using one twist isomorphism in each step (the precise definition is given in the beginning of \S \ref{parnewdif}).
	\item The natural action of shuffle permutations on nerves of categories, as discussed in \S \ref{parshuffle}.
\end{itemize}
In \S \ref{parparcomp} we go on to define cochain maps 
\begin{equation}\label{eqFG}
	\fff:\CC^{\bullet}_{\mathrm{GS}}(\aaa) \lra \CC^{\bullet}_{\uuu}(\tilde{\aaa}) \hspace{0,5cm} \text{and} \hspace{0,5cm} 
	\GGG: \CC^{\bullet}_{\uuu}(\tilde{\aaa}) \lra \CC^{\bullet}_{\mathrm{GS}}(\aaa)
\end{equation}
between $\CC^{\bullet}_{\mathrm{GS}}(\aaa)$ and the $\uuu$-graded Hochschild complex $\CC_{\uuu}^\bullet(\tilde{\aaa})$ as defined in \cite{lowenmap}. 
Although the existence of those maps is inspired by the existence of the maps in \eqref{aaa!}, due to our more complicated differential on $\CC_{\mathrm{GS}}(\aaa)$, the maps in \eqref{eqFG} are actually new and considerably more complicated. Our main theorem is the following (see Proposition \ref{propGF} and Theorem \ref{homotopy}):

\begin{theorem} \label{mainintro} The maps $\fff$ and $\GGG$ are inverse quasi-isomorphisms. More precisely
\begin{enumerate}
\item $\GGG\fff(\phi)=\phi$ for any normalized reduced cochain $\phi$;
\item there is an explicit homotopy $T: \fff \GGG \sim 1$.
\end{enumerate}
\end{theorem}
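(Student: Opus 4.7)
The plan is to treat parts (1) and (2) by quite different methods: part (1) is pursued by a direct, term-by-term computation on the subcomplex of normalized reduced cochains, while part (2) requires the construction of an explicit nullhomotopy.

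For part (1), I would unfold the definitions of $\fff$ and $\GGG$ from \S\ref{parparcomp} and evaluate $\GGG\fff(\phi)$ on a generic $(p,q)$-datum, namely a $p$-simplex $\sigma$ as in \eqref{eqsigma00} together with morphisms $A_{i-1}\to A_i$ in $\aaa(U_p)$. Substitution expands $\GGG\fff(\phi)$ into a sum indexed by ways of splitting $\sigma$, inserting composite restriction functors and absorbing twist isomorphisms. By restricting to the normalized reduced subcomplex of \S\ref{parnormred}, every summand containing an identity restriction or a degenerate tilde-insertion vanishes, so that only the tautological summand corresponding to the original argument survives. Thus part (1) should reduce to a bookkeeping argument on the indexing set of the substitution, and in fact give $\GGG\fff(\phi) = \phi$ on the nose (not merely up to homotopy), which is the analogue of Gerstenhaber--Schack's identity $\hat{\tau}\tau = 1$ from approach (A2).

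For part (2), I would define $T$ directly on $\CC^{\bullet}_{\uuu}(\tilde{\aaa})$. Given an $n$-cochain $\psi$, a natural ansatz is
$$T(\psi) \;=\; \sum_{k,\sigma} \pm\, \psi\bigl(\dots,\, \sigma^{\ast}(-),\, \dots\bigr),$$
where the sum runs over slots $k$ at which a composite restriction $\sigma^{\ast}$ is inserted and $\sigma$ varies over prefixes of the underlying simplex of the input. The twist $2$-cells along $\sigma$ are absorbed using the paths of natural transformations between $\sigma^{\star}$ and $\sigma^{\ast}$ introduced in \S\ref{parnewdif}, and the signs are generated by the shuffle action of \S\ref{parshuffle}. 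The homotopy identity $\fff\GGG - 1 = dT + Td$ is then verified componentwise by matching the Hochschild boundary of $T\psi$ and each of the $(d_0, d_1, \dots, d_n)$-pieces of $d(T\psi)$ against $T$ applied to the Hochschild differential of $\psi$; telescoping over the indices $k$ and $\sigma$ should collapse everything to $\fff\GGG(\psi) - \psi$.

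The main obstacle will clearly be part (2). Because the differential \eqref{newdifinin} carries the higher components $d_2, \dots, d_n$ of \eqref{defnewdif}, the homotopy identity cannot be checked in a short calculation; instead, the terms of $dT + Td$ must be organized by the same path-and-shuffle combinatorics used to \emph{define} $d$, and one has to exhibit an explicit sign-reversing involution on the subset of terms that are not already cancelled by neighbouring summands of $T$. Once this combinatorial matching is in place, the uncancelled contributions collapse to $\fff\GGG(\psi) - \psi$, which, combined with part (1), proves that $\fff$ and $\GGG$ are mutually inverse quasi-isomorphisms and completes the proof of Theorem \ref{mainintro}.
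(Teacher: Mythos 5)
Your part (1) is essentially the paper's own argument: Proposition \ref{propGF} expands $\GGG\fff(\phi)$ at a pair $(\sigma,a)$ and observes that the components $\fff_i$ with $i>p$ die because the underlying simplex is degenerate, those with $i<p$ die because the inserted argument is normal, and for $i=p$ only the trivial partition and trivial shuffle survive, giving $\GGG\fff(\phi)=\phi$ on the nose; combined with Propositions \ref{inclusion1} and \ref{normalreducefinal} this is exactly how the paper uses it.

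The gap is in part (2), and it is conceptual: you verify the homotopy identity against the wrong differential. The composite $\fff\GGG$ is an endomorphism of the map-graded Hochschild complex $\CC^{\bullet}_{\uuu}(\tilde{\aaa},\tilde{M})$, whose differential is the ordinary Hochschild differential $\delta$ of the $\uuu$-graded category $\tilde{\aaa}$; the identity to be proved is $\fff\GGG(\Psi)-\Psi=\delta T_n\Psi+T_{n+1}\delta\Psi$ as in Theorem \ref{homotopy}, and the higher components $d_2,\dots,d_n$ of \eqref{defnewdif} never occur in it. Writing the identity as $\fff\GGG-1=dT+Td$ and proposing to match the $(d_0,\dots,d_n)$-pieces of $d(T\psi)$ does not parse, since $T\psi$ lives on the $\tilde{\aaa}$-side, where $d$ is not defined. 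The higher $d_j$ enter the comparison only through the statements that $\fff$ and $\GGG$ are cochain maps at all (Propositions \ref{Fcommutes} and \ref{Gcommutes}); your proposal leaves that step implicit, although it is needed before the phrase inverse quasi-isomorphisms has content, and it is where the genuinely new differential has to be confronted.

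A second, related problem is that your ansatz for $T$ is too coarse to support the verification. In the paper one sets $(T_{n+1}\Psi)^{\sigma}(\tilde{a})=\Psi(\Omega_n(\sigma,\tilde{a}))$, where $\Omega_n$ is defined \emph{recursively}, $\Omega_n(\sigma,\tilde{a})=(-1)^{n+1}\omega_n(\sigma,\tilde{a})+\Omega_{n-1}(\partial_0\sigma,\partial_n\tilde{a})\sqcup\tilde{a}_n$, and the pieces $\omega_{n,p}$ are assembled from precisely the $\Seq$- and $\Seqq$-data (paths, partitions, conditioned shuffles) that define $\fff$ and $\GGG$, concatenated with the composed tail $\bar{a}_{n+1-p,\dots,n}$. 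The homotopy identity is then reduced to a universal identity in the free abelian group $\langle\Lambda\rangle$ (Lemma \ref{lemmahomoeqn1}), proved by induction on $n$ via the cancellation scheme of Lemma \ref{homomain1}, in which the terms of $\sum_i\pm\,\partial_i\omega_{n+1}$ are matched not only by a sign-reversing involution among themselves but also against $\omega_n(\partial_i\tilde{a})$, against $\partial_{n+1}\bigl(\omega_n\sqcup\tilde{a}_{n+1}\bigr)$, and against the terms $\Delta_n$ coming from $\fff\GGG-1$. Your sum over slots where a composite restriction is inserted, with a single involution and a telescoping over prefixes, does not capture the recursive tail $\Omega_{n-1}(\partial_0\sigma,\partial_n\tilde{a})\sqcup\tilde{a}_n$, which is exactly what lets the induction absorb the boundary terms; as stated there is no reason the uncancelled contributions collapse to $\fff\GGG(\Psi)-\Psi$. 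To complete part (2) you would need to specify $T$ at the level of detail of $\Omega_n$ and organize the cancellations along the lines of Lemma \ref{homomain1}.
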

In combination with \eqref{CCT2} and \eqref{Extuuu}, we thus obtain
\begin{corollary}\label{ExtGS}
	$H^n\CC_{\mathrm{GS}}(\aaa) = \Ext^n_{\aaa - \aaa}(\aaa, \aaa).$
\end{corollary}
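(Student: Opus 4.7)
The statement is an immediate consequence of Theorem~\ref{mainintro} combined with the two results \eqref{Extuuu} and \eqref{CCT2} already available in the literature, so the plan is simply to assemble the chain of isomorphisms rather than to produce any new argument. First I would apply Theorem~\ref{mainintro}: since $\fff$ and $\GGG$ are inverse quasi-isomorphisms between $\CC^{\bullet}_{\mathrm{GS}}(\aaa)$ and $\CC^{\bullet}_{\uuu}(\tilde{\aaa})$, passing to cohomology yields $H^n\CC_{\mathrm{GS}}(\aaa) \cong H^n\CC_{\uuu}(\tilde{\aaa})$. Second, the identification \eqref{Extuuu} from \cite{lowenmap} rewrites the right-hand side as the bimodule Ext group $\Ext^n_{\tilde{\aaa}-\tilde{\aaa}}(\tilde{\aaa},\tilde{\aaa}) = HH^n_{\uuu}(\tilde{\aaa})$. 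Third, the Cohomology Comparison Theorem of Lowen--Van den Bergh recorded as \eqref{CCT2} supplies $HH^n_{\uuu}(\tilde{\aaa}) \cong HH^n(\aaa) = \Ext^n_{\aaa-\aaa}(\aaa,\aaa)$. Concatenating the three isomorphisms delivers the corollary.

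There is no genuine obstacle here: all of the real work sits inside Theorem~\ref{mainintro}, where the combinatorial construction of $\fff$, $\GGG$ and the homotopy $T$ is carried out, and inside the external CCT of \cite{lowenvandenberghCCT}. The only thing worth double-checking is a bookkeeping matter, namely that the conventions for $\uuu$-graded bimodules and the associated $\Ext$-groups employed in \cite{lowenmap} and \cite{lowenvandenberghCCT} agree on the nose with the notation set up in the introduction, so that the three isomorphisms can genuinely be composed without any reindexing or sign adjustment. Once this is observed, no further argument is required.
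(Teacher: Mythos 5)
Your proposal is correct and coincides with the paper's own argument: Corollary~\ref{ExtGS} is obtained precisely by composing the quasi-isomorphism of Theorem~\ref{mainintro} with the identification \eqref{Extuuu} from \cite{lowenmap} and the Cohomology Comparison Theorem \eqref{CCT2} of \cite{lowenvandenberghCCT}. The only point to keep in mind is the paper's standing hypothesis that $k$ is a field wherever the $\Ext$ interpretations are invoked, which is exactly the bookkeeping caveat you already flag.
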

Note that, in contrast with \cite{gerstenhaberschack1}, in our setup we do not give a direct proof of Corollary \ref{ExtGS}, whence the approach (A1) is not available to us. 

Our construction of the homotopy $T: \fff \GGG \sim 1$ in Theorem \ref{mainintro}(2) is new even in the presheaf case and has the following important consequence. By the Homotopy Transfer Theorem \cite[Theorem 10.3.9]{lodayvallette},
using $T$ we can transfer the  dg Lie algebra structure present on $\CC_{\uuu}^\bullet(\tilde{\aaa})$ (see \cite{lowenmap}) in order to obtain an $L_{\infty}$-structure on $\CC^{\bullet}_{\mathrm{GS}}(\aaa)$. This $L_{\infty}$-structure determines the higher deformation theory of $\aaa$ as a prestack, which thus becomes equivalent to the higher deformation theory of the $\uuu$-graded category $\tilde{\aaa}$ described in \cite{lowenmap}. A more detailed elaboration of this $L_{\infty}$-structure, as well as a comparison with the $L_{\infty}$ deformation complex described in the literature in an operadic context \cite{fregiermarklyau},\cite{doubek},\cite{merkulovvallette} will appear in \cite{dinhvan}.

\vspace{0,5cm}

\noindent \emph{Acknowledgement.} The second author is very grateful to Michel Van den Bergh for many interesting discussions, and in particular for his proposal of map-graded Hochschild cohomology which was originally made in the context of a local-to-global spectral sequence \cite{lowenvandenberghlocglob}.

\section{Prestacks and Map-graded categories}\label{parparpremap}

Let $k$ be a commutative ground ring. Except for the $\Ext$ interpretations of cohomologies of complexes, which are of secondary importance in the paper, all our results hold true in this generality.

In this section, we recall the notions of prestacks and map-graded categories, thus fixing terminology and notations. As described explicitly in \cite{lowenmap}, prestacks and map-graded categories constitute two different incarnations of mathematical data that are equivalent in a suitable sense. A prestack is a pseudofunctor taking values in $k$-linear categories. The terminology is used as in \cite{lowenvandenberghCCT}, \cite{dinhvanliulowen}. 

Let $\uuu$ be a small category.

\begin{definition}\label{defpseudo}
	A \emph{prestack} $\aaa=(\aaa,m,f,c)$ on $\uuu$ consists of the following data:
	\begin{itemize}
		\item for every object $U \in \uuu$, a $k$-linear category $(\aaa(U),m^U, 1^U)$ where $m^U$ is the composition of morphisms in $\aaa(U)$ and $1^U$ encodes the identity morphisms on $\aaa(U)$;
		\item for every morphism $u\colon V \lra U$ in $\uuu$, a $k$-linear functor $f^u = u^{\ast}\colon \aaa(U) \lra \aaa(V)$. For $u=1_U$, we require that $f^{1_U}=1_U$.
		\item for every couple of morphisms $v\colon W \lra V$, $u\colon V  \lra U$ in $\uuu$, a natural isomorphism
		$$c^{u,v}\colon v^{\ast}u^{\ast} \lra (uv)^{\ast}.$$
		For $u=1$ or $v=1$, we require that $c^{u,v}=1$. Moreover the natural isomorphisms have to satisfy the following coherence condition for every triple $w\colon T\lra W$, $v\colon W\lra V$, $u\colon V\lra U$:
		\begin{equation}\label{compatibility}
			c^{u,vw}(c^{v,w} \circ u^{\ast})=c^{uv,w}(w^{\ast} \circ c^{u,v}).
		\end{equation}
			\end{itemize}
\end{definition}

\begin{remark}
	A presheaf of $k$-linear categories is considered as a prestack in which $c^{u,v}=1$ for every $v\colon W \lra V$, $u\colon V  \lra U$.
\end{remark}

A prestack being a pseudofunctor, we obviously define a morphism of prestacks to be a pseudonatural transformation. 

\begin{definition}
	Consider prestacks $(\aaa, m, f, c)$ and $(\aaa', m', f', c')$ on $\uuu$. A \emph{morphism of prestacks} $(g,\tau)\colon\aaa \lra \aaa'$ consists of the following data:
	\begin{itemize}
		\item for  each $U\in \uuu$,  a functor $g^U\colon\aaa(U)\lra\aaa'(U)$;
		\item for each $u\colon V\lra U$ and $A\in\aaa(U)$, an element $$\tau^{u,A}\in\aaa'(V)(u'^*g^U(A),g^V(u^*A)$$
	\end{itemize}
	These data further satisfy the following conditions: for any $v\colon W\lra V$, $u\colon V\lra U$ and $a\in\aaa(U)(A,B)$,
	\begin{enumerate}
		\item  $m'^V(g^Vu^*(a),\tau^u)=m'^V(\tau^u,u'^* g^U(a))$;
		\item  $m'^W(\tau^{uv},c'^{u,v})=m'^W(g^W(c^{u,v}),\tau^v,v'^*(\tau^u))$;
		\item  $m'^U(\tau^{1_U},1'_U)=g^U(1_U)$.
	\end{enumerate}
\end{definition}

Let $\Mod(k)$ be the category of $k$-modules and let $\underline{\Mod}(k)$ be the constant prestack on $\uuu$ with value $\Mod(k)$. We are mainly interested in modules and bimodules.

\begin{definition}\label{defmodule}
	Let $\aaa$ be a prestack on $\uuu$. An \emph{$\aaa$-module} is a morphism of prestacks $M:\aaa^{\mathrm{op}}\lra \underline{\Mod}(k)$. More precisely, an $\aaa$-module consists of the following data:
	\begin{itemize}
		\item for every $U\in\uuu$, an $\aaa(U)$-module $M^U: \aaa(U)^{\mathrm{op}} \lra \Mod(k)$;
		\item for every $u:V\lra U$, a morphism of $\aaa(U)$-modules $M^u:M^U\lra M^Vu^*$;\\
		such that the following coherence condition holds for every $u:V\lra U, v:W\lra V$: the morphism $M^{uv}$ equals the canonical composition
		$$\xymatrix{M^U\ar[r]^-{M^u} &M^Vu^*\ar[r]^-{M^vu^*}&M^Wv^*u^* \ \ \ar[r]^-{M^W(c^{u,v})}&\ \ M^{W}{(uv)^*} }.$$	
	\end{itemize}
\end{definition}

\begin{definition} Let $\aaa$, $\bbb$ be prestacks on $\uuu$.
	An \emph{$\aaa$-$\bbb$-bimodule} is a module over $\aaa^{\mathrm{op}}\otimes\bbb$. More concretely, an $\aaa$-$\bbb$-bimodule $M$ consists of abelian groups $$M^U(B,A)$$
	for $U\in\mathrm{Ob}(\uuu)$, $A\in\mathrm{Ob}(\aaa(U)), B\in\mathrm{Ob}(\bbb(U))$, together with restriction morphisms $$M^u(B,A):M^U(B,A)\lra M^V(u^*B,u^*A)$$
	for $u:V\lra U$ in $\uuu$ satisfying the natural coherence condition obtained from Definition \ref{defmodule}.
\end{definition}

Next we turn to map-graded categories in the sense of \cite{lowenmap}, where ``map'' stands for the maps in the underlying small category $\uuu$.

\begin{definition} A  \emph{$\uuu$-graded $k$-category} $\mathfrak{a} = (\mathfrak{a}, \mu, \mathrm{id})$ consists of the following data:
	\begin{itemize}
		\item for every object $U\in\uuu$ 
		, we have a set of \emph{objects} $\mathfrak{a}(U)$;
		\item for every morphism $u : V\lra U$ in $\uuu$ and objects $A\in \mathfrak{a}(V),
		B \in \mathfrak{a}(U)$,  we have a $k$-module $\mathfrak{a}_u(A, B)$ of \emph{morphisms}.
	\end{itemize} These data are further equipped with compositions and identity morphisms in the following sense. The composition $\mu$ on $\mathfrak{a}$ consists of operations
	\begin{equation*}
		\mu^{u,v,A,B,C}:\mathfrak{a}_u(B,C)\otimes\mathfrak{a}_v(A,B)\lra\mathfrak{a}_{uv}(A,C)
	\end{equation*}
	satisfying the associativity condition 
	$$\mu^{w,uv,A,C,D}(\mu^{u,v,A,B,C}\otimes 1_{\mathfrak{a}_w(C,D))}=\mu^{wu,v,A,B,D}(1_{\mathfrak{a}_v(A,B)}\otimes \mu^{w,u,B,C,D}).$$
	The identity $\mathrm{id}$ on $\mathfrak{a}$ consists of elements $\mathrm{id}^A\in\mathfrak{a}_1(A,A)$ satisfying the condition 
	$$\mu^{u,1,A,A,B}(1_{\mathfrak{a}_u(A,B)}\otimes \mathrm{id}^A)=1_{\mathfrak{a}_u(A,B)}=\mu^{1,u,A,B,B}(\mathrm{id}^B\otimes 1_{\mathfrak{a}_u(A,B)}).$$
\end{definition}

The most natural type of modules to consider over a map-graded category turn out to be a kind of bimodules:

\begin{definition}
	Let $\mathfrak{a}$ be a $\uuu$-graded $k$-category. An \emph{$\mathfrak{a}$-bimodule} $M$ consists of $k$-modules $$M_u(A,B)$$
	for $u:V\lra U, A\in\mathfrak{a}(V),B\in\mathfrak{a}(U)$ and compositions
	$$\rho:\mathfrak{a}_u(C,D)\otimes M_v(B,C)\otimes \mathfrak{a}_w(A,B)\lra M_{uvw}(A,D)$$
	satisfying the following associativity and identity conditions:
	\begin{enumerate}
		\item $\rho(\mu\otimes 1\otimes\mu)=\rho(1\otimes \rho \otimes 1)$;
		\item $\rho(\mathrm{id}\otimes 1\otimes \mathrm{id})=1$.
	\end{enumerate}
\end{definition}

Let $(\aaa,m,f,c)$ be prestack on $\uuu$. The associated $\uuu$-graded category $(\widetilde{\aaa}, \mu, \mathrm{id})$ is defined as a $k$-linear version of the Grothendieck construction from \cite{SGA1}, more precisely: 
\begin{itemize}
	\item for each object $U\in\uuu$, we put $\widetilde{\aaa}(U)=\mathrm{Ob}(\aaa(U))$;
	\item for every morphism $u:V\lra U$ and objects $A\in\widetilde{\aaa}(V),B\in\widetilde{\aaa}(U)$, we put $$\widetilde{\aaa}_u(A,B)=\aaa(U)(A,u^*B).$$
\end{itemize}
The composition operations $$\mu:\widetilde{\aaa}_u(B,C)\otimes\widetilde{\aaa}_v(A,B)\lra\widetilde{\aaa}_{uv}(A,C)$$
are defined by setting $\mu(b,a)=m(c^{u,v,C},v^*b,a)$ for every $a\in\widetilde{\aaa}_v(A,B),b\in \widetilde{\aaa}_u(B,C)$ and the identities are given by
$\mathrm{id}^A = 1^{U,A} \in \aaa(U)(A,A) = \tilde{\aaa}_{1_U}(A,A)$ for $A \in \aaa(U)$.

There is a natural functor $$\widetilde{(-)}:\mathrm{Bimod}(\aaa)\lra\mathrm{Bimod}(\widetilde{\aaa}): M \longmapsto \widetilde{M}$$   
defined by $$\widetilde{M}_u(A,B):=M^V(A,u^*B)$$                                
for every $u:V\lra U, A\in\tilde{\aaa}(V), B\in\tilde{\aaa}(U)$. In \cite{lowenvandenberghCCT}, inspired by Gerstenhaber and Schack's Cohomology Comparison Theorem \cite{gerstenhaberschack1}, this functor is shown to induce a fully faithful functor on the level of the derived categories. In particular:

\begin{theorem}\cite[Theorem 1.1]{lowenvandenberghCCT}
	For any $M,N\in\mathrm{Bimod}(\aaa)$, we have $$\mathrm{Ext}^n_{\aaa-\aaa}(M,N)\cong \mathrm{Ext}^n_{\tilde{\aaa}-\tilde{\aaa}}(\widetilde{M},\widetilde{N})$$ for all $n$.
\end{theorem}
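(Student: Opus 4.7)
The plan is to show that the exact functor $\widetilde{(-)}\colon \mathrm{Bimod}(\aaa) \lra \mathrm{Bimod}(\tilde{\aaa})$ is fully faithful at the derived level, which immediately yields the claimed $\mathrm{Ext}$ isomorphism. Exactness follows from the componentwise formula $\widetilde{M}_u(A,B) = M^V(A, u^*B)$: each restriction functor $u^*\colon \aaa(U) \lra \aaa(V)$ is $k$-linear, so $\widetilde{(-)}$ preserves kernels and cokernels and descends to a triangulated functor between derived categories. The theorem then reduces to producing, naturally in $M$ and $N$, a quasi-isomorphism $\mathrm{RHom}_{\aaa-\aaa}(M,N) \simeq \mathrm{RHom}_{\tilde{\aaa}-\tilde{\aaa}}(\widetilde{M}, \widetilde{N})$.

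To produce such a quasi-isomorphism I would compare two-sided bar-type resolutions on the two sides. On the map-graded side, one has the standard bar resolution $B_\bullet(\tilde{\aaa}, \tilde{\aaa}, \widetilde{M})$, whose $p$-th term is indexed by a $p$-simplex $\sigma$ in the nerve of $\uuu$ together with a composable sequence of elements of the hom-spaces $\tilde{\aaa}_{u_i}$. On the prestack side I would construct an analogous prestack bar resolution $B_\bullet(\aaa, \aaa, M)$, indexed by the same simplices but with hom-spaces taken in the $\aaa(U_i)$ after pull-back along the relevant $u_i^*$; its differential is a Hochschild-like alternating sum with additional correction terms inserting the twist isomorphisms $c^{u,v}$, in precisely the spirit of the higher components $d_j$ of \eqref{newdifinin} developed later in the paper.

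The heart of the argument is an explicit quasi-isomorphism $\widetilde{B_\bullet(\aaa,\aaa,M)} \simeq B_\bullet(\tilde{\aaa}, \tilde{\aaa}, \widetilde{M})$ of complexes of $\tilde{\aaa}$-bimodules, constructed by summing over shuffle permutations on the nerve of $\uuu$ and over paths of twist natural transformations, in the same combinatorial spirit as the maps $\fff, \GGG$ of \eqref{eqFG}. A standard adjunction identifies $\Hom_{\aaa-\aaa}(B_\bullet(\aaa,\aaa,M), N)$ with $\Hom_{\tilde{\aaa}-\tilde{\aaa}}(\widetilde{B_\bullet(\aaa,\aaa,M)}, \widetilde{N})$, since $\widetilde{(-)}$ is compatible with the free-bimodule generators appearing in each term; composing with the comparison quasi-iso then gives a termwise quasi-isomorphism of cochain complexes computing the two Ext groups, and taking cohomology finishes the proof.

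The main obstacle is the construction and verification of the comparison quasi-isomorphism in the third step. Because the twists $c^{u,v}$ are nontrivial, the differential on the prestack bar resolution is not an honest alternating sum and the comparison cannot be a plain Eilenberg-Zilber shuffle: one must systematically insert sums over paths of twist natural transformations and verify, by repeated use of the coherence condition \eqref{compatibility}, that these assemble into a cochain map. This is essentially the content of the cohomology comparison theorem of \cite{lowenvandenberghCCT}; specializing to $M=N=\aaa$ recovers, up to reindexing by the nerve of $\uuu$, precisely the combinatorics of shuffles and path-sums that the present paper later unpacks for the Gerstenhaber-Schack complex.
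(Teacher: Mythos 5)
You should first note that this statement has no proof in the present paper at all: it is quoted with the citation \cite{lowenvandenberghCCT}, and the surrounding text merely records that the functor $\widetilde{(-)}$ is shown \emph{there} to induce a fully faithful functor on derived categories, of which the Ext isomorphism is an immediate consequence. So the only question is whether your sketch stands on its own as a proof, and it does not. Your preliminary reductions are fine: exactness of $\widetilde{(-)}$ from the pointwise formula $\widetilde{M}_u(A,B)=M^V(A,u^*B)$, and the observation that derived full faithfulness yields $\mathrm{Ext}^n_{\aaa-\aaa}(M,N)\cong \mathrm{Ext}^n_{\tilde{\aaa}-\tilde{\aaa}}(\widetilde{M},\widetilde{N})$. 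But all of the substance is concentrated in your third step, and there you only describe what a comparison ought to look like. You do not verify that the proposed ``prestack bar resolution'' with twist-correction differentials actually squares to zero and is acyclic (coherence bookkeeping of exactly the kind that Theorem \ref{propnewdif} needs several pages for), you do not construct the asserted quasi-isomorphism with $B_\bullet(\tilde{\aaa},\tilde{\aaa},\widetilde{M})$, and the ``standard adjunction'' identifying $\Hom_{\aaa-\aaa}(B_p(\aaa,\aaa,M),N)$ with $\Hom_{\tilde{\aaa}-\tilde{\aaa}}(\widetilde{B_p},\widetilde{N})$ is not an off-the-shelf fact: it amounts to knowing that $\widetilde{(-)}$ behaves as a full embedding on the relevant generators, which is part of what the comparison theorem itself establishes.

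The decisive defect is that you close the gap by saying the missing construction ``is essentially the content of the cohomology comparison theorem of \cite{lowenvandenberghCCT}'' --- that is, you invoke the very statement you are supposed to prove, so as a self-contained argument the proposal is circular. Your instinct about how such a proof would go (bar-type resolutions, shuffles, sums over paths of twist isomorphisms, repeated use of the coherence condition \eqref{compatibility}) is reasonable and is indeed in the spirit of the combinatorics this paper develops for $\fff$ and $\GGG$, but the actual proof in \cite{lowenvandenberghCCT} is a substantial piece of work that your outline does not reproduce or replace. The honest options are either to cite the theorem, as the paper does, or to carry out the comparison in full: construct the twisted resolution, prove it resolves $M$, build the explicit comparison map, and check it is a quasi-isomorphism of complexes of $\tilde{\aaa}$-bimodules compatible with $\Hom(-,N)$ --- none of which is done here.
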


\section{The Gerstenhaber-Schack complex for prestacks}\label{GScomplex}\label{parparhodgeHKR}\label{parparGS}

If $\aaa$ is a presheaf of $k$-categories, then in complete analogy with the case of presheaves of $k$-algebras treated in \cite[\S 21]{gerstenhaberschack1} and \cite{gerstenhaberschack}, one defines the Gerstenhaber-Schack (GS) complex $(\CC^{\bullet}_{\mathrm{GS}}(\aaa,M),d_{\mathrm{GS}})$ for an $\aaa$-bimodule $M$ as the total complex of a double complex with $d_{\mathrm{GS}} =d_{\mathrm{Hoch}}+ d_{\mathrm{simp}}$ for the horizontal Hochschild differential $d_{\mathrm{Hoch}}$ and the vertical simplicial differential $d_{\mathrm{simp}}$. The cohomology of this complex is called Gerstenhaber-Schack (GS) cohomology and denoted
$$HH^n_{\mathrm{GS}}(\aaa, M) = H^n\CC^{\bullet}_{\mathrm{GS}}(\aaa, M).$$ We denote $\CC^{\bullet}_{\mathrm{GS}}(\aaa) =\CC^{\bullet}_{\mathrm{GS}}(\aaa, \aaa)$ and $HH^n_{\mathrm{GS}}(\aaa) = H^n\CC^{\bullet}_{\mathrm{GS}}(\aaa)$.

In the fashion of \cite[\S 2]{dinhvanliulowen} one sees that the second cohomology group $HH_{\mathrm{GS}}^2(\aaa)$ naturally classifies the first order deformations of $\aaa$ \emph{as a prestack}. Even though many prestacks of interest are in fact presheaves - for instance (restricted) structure sheaves of schemes as treated in \cite{dinhvanliulowen} - the fact that prestacks turn up naturally as deformations suggests that it is really \emph{prestacks} of which one should understand Gerstenhaber-Schack cohomology and deformations in the first place.

Our main aim in this section is to define a Gerstenhaber-Schack (GS) complex $\CC^{\bullet}_{\mathrm{GS}}(\aaa, M)$ for an arbitrary prestack $\aaa$. Contrary to what one may at first expect, the change from the presheaf case to the prestack case is a major one. Indeed, if $\aaa$ is non-trivially twisted ($c^{u,v}\ne 1$), with the natural definitions of $d_{\mathrm{Hoch}}$ and $d_{\mathrm{simp}}$ we now in general have $d_{\mathrm{simp}}^2 \neq 0$ so we do not obtain a double complex. Instead, we construct a more complicated differential  on the total double object $\CC^{\bullet}_{\mathrm{GS}}(\aaa, M)$ by adding more components to the formula. After introducing the double object $\CC^{\bullet}_{\mathrm{GS}}(\aaa, M)$ in \S \ref{parGScompl} as a slight modification of the object associated to a presheaf, in \S \ref{parnewdif} we define the new differential
\begin{equation}\label{newdifin}
	d =d_0+d_1+\dots+d_n:\CC_{\mathrm{GS}}^{n-1}(\aaa,M)\lra \CC_{\mathrm{GS}}^n(\aaa,M)
\end{equation}
where $d_0=d_\mathrm{Hoch},d_1=(-1)^nd_\mathrm{simp}$ and the higher $d_j$ are defined in \eqref{defnewdif}. The new differential is shown to square to zero in Theorem \ref{propnewdif}. The definition of the individual components makes essential use of shuffle products. In the self contained \S \ref{parshuffle}, we give a detailed description of the natural action of shuffle permutations on nerves of categories. 

In order to properly relate the GS cohomology to deformation theory, we have to turn to the complex of normalized reduced cochains, which is introduced in \S \ref{parnormred}  as a subcomplex of the GS complex and shown to be quasi-isomorphic to the latter in Propositions \ref{inclusion1}, \ref{normalreducefinal}.
 Finally, in \S \ref{pardefo} generalizing \cite[Thm 2.21]{dinhvanliulowen}, in Theorem \ref{proptwist} we prove that $HH^2_{\mathrm{GS}}$ classifies first order deformations of $\aaa$ as a prestack.

\subsection{Shuffle products}\label{parshuffle}
In this section, we discuss the natural action of shuffle permutations on nerves of categories.
Let $S_n$ be the symmetric group of permutations of $\{1, \dots, n\}$. For $n_i \geq 0$ with $\sum_{i = 1}^k n_i = n$, a permutation $\beta \in S_n$ is an \emph{$(n_i)_i$-shuffle} if the following holds:
for $1 \leq i \leq k$ and $\sum_{j = 1}^{i -1} n_j +1 \leq x \leq y \leq \sum_{j = 1}^i n_j$ we have $\beta(x) \leq \beta(y)$.
The permutation is a \emph{conditioned $(n_i)_i$-shuffle} if moreover we have 
$$\beta(\sum_{i = 1}^{l-1} n_i +1) \leq \beta(\sum_{i = 1}^{l} n_i + 1)$$ for all $1 \leq l \leq k-1$.
Let $S_{(n_i)_i} \subseteq S_n$ be the subset of all $(n_i)_i$-shuffles and $\bar{S}_{(n_i)_i} \subseteq S_{(n_i)_i}$ the subset of conditioned $(n_i)_i$-shuffles.
For any set $X$, $S_n$ obviously has an action of $X^n$. For $\beta \in S_n$ and $(x_1, \dots, x_n) \in X^n$, we define
$$\beta^{(0)}(x_1, \dots, x_n) = (x_{\beta^{}(1)}, \dots, x_{\beta^{}(n)}).$$
When working with $(n_i)_i$-shuffles, we will often consider different sets $X_i$ for $1 \leq i \leq k$ and elements $x^i = (x^i_1, \dots x^i_{n_i}) \in (X_i)^{n_i}$ for $1 \leq i \leq k$. Thus, for a permutation $\beta$, we obtain the \emph{formal shuffle} by $\beta$ of $(x^i)_i$:
\begin{equation}\label{action1}
\beta^{(0)}( (x^i_1, \dots x^i_{n_i})_i) = \beta^{(0)}(x^1_1, \dots, x^1_{n_1}, \dots, x^k_1, \dots x^k_{n_k}) \in (\coprod_{i = 1}^k X_i)^n.
\end{equation}
For instance, for $k = 2$, $\beta \in S_{m,n}$, $x = (x_1, \dots x_m) \in X^m$ and $y = (y_1, \dots y_n) \in Y^n$, we denote the formal shuffle by $\beta$ of $(x,y)$ by:
$$x \underset{\beta}{*} ^{(0)}y = \beta^{(0)}(x,y) = \beta^{(0)}(x_1, \dots, x_m, y_1, \dots, y_n).$$
In the remainder of this section, we discuss the action of shuffle permutations on nerves of categories. Consider categories $\aaa_i$ for $1 \leq i \leq k$. We now refine action \eqref{action1} to obtain a \emph{shuffle} action 
\begin{equation}\label{action2}
S_{(n_i)_i} \times \prod_{i = 1} ^k \nnn_{n_i}(\aaa_i) \lra \nnn_{n}(\prod_{i = 1}^k \aaa_i).
\end{equation}

Consider $\beta \in S_{(n_i)_i}$ and
$$a^i = (\xymatrix{{A_0^i} \ar[r]^-{a^i_{n_i}} & {A_1^i} \ar[r]^-{a^i_{n_i -1}} & {\dots}  \ar[r]^-{a^i_2} & {A^i_{n_i -1}} \ar[r]^-{a_1^i} & {A_{n_i}^i}}) \in \nnn_{n_i}(\aaa_i).$$
Note that it may occur that $n_i = 0$ and $a^i = A^i_0 \in \nnn_0(\aaa_i) = \Ob(\aaa_i)$.
For the associated elements $\underline{a}^i = (a^i_1, a^i_2, \dots, a^i_{n_i -1}, a^i_{n_i}) \in \Mor(\aaa_i)^{n_i}$, we obtain the formal shuffle
$\underline{b} = \beta^{(0)}((\underline{a}^i)_i) = (\underline{b}_{1}, \dots, \underline{b}_n)$. We now inductively associate to $\underline{b}$ an element 
$$b = \beta((a^i)_i) \in \nnn_{n}(\prod_{i = 1}^k \aaa_i)$$
 with source $\prod_{i = 1}^k A^i_0$ and target $\prod_{i = 1}^k A^i_{n_i}$. Then $b$ is called the \emph{shuffle product} by $\beta$ of $(a^i)_i$, and $\underline{b}$ is called the {\em formal sequence} of $b$.\\
 Since $\beta$ is a shuffle permutation, we have $\underline{b}_1 = a^j_1: A^j_{n_j -1} \lra A^j_{n_j}$ for some $1 \leq j \leq k$. We put $B_n = \prod_{i = 1}^k A^i_{n_i}$,
$B_{n-1} = A^1_{n_1} \times \dots \times A^j_{n_j -1} \times \dots \times A^k_{n_k}$ and
$$b_1 = (1_{A^1_{n_1}}, \dots, a^j_1, \dots, 1_{A^k_{n_k}}): B_{n-1} \lra B_n.$$
Now suppose
$$\hat{b}_l = (\xymatrix{ {B_{n-l}} \ar[r]^-{{b}_l} & {B_{n-l+1}} \ar[r]^-{{b}_{l-1}} & {\dots} \ar[r]^-{{b}_2} & {B_{n-1}} \ar[r]^-{{b}_1} & {B_n}}) \in \nnn_l(\prod_{i = 1}^k \aaa_i)$$
has been defined with $B_{n-l} = \prod_{i=1}^k B^i_{n-l}$ and $B^i_{n-l} = A^i_{n_i -\alpha_i}$ where $\alpha_i = \max\{ t \,\, |\,\, a^i_t \in \{ \underline{b}_1, \dots, \underline{b}_l\} \}$.
It then follows that $\underline{b}_{l+1} = a^j_{\alpha_j +1}$ for some $1 \leq j \leq k$ and we put $B_{n-l-1} = A^1_{n_1 - \alpha_1} \times \dots \times A^j_{n_j - \alpha_j -1} \times \dots \times A^k_{n_k - \alpha_k}$ and

\begin{align}\label{shuffleelements}
b_{l + 1} = (1_{A^1_{\alpha_1}}, \dots, a^j_{\alpha_j + 1}, \dots, 1_{A^k_{\alpha_k}}): B_{n-l -1} \lra B_{n-l}.
\end{align}
We thus arrive at the element $$b = \beta^{}((a^i)_i) = \hat{b}_n =(b_1,\dots,b_n)\in \nnn_{n}(\prod_{i = 1}^k \aaa_i)$$ which concludes the definition of \eqref{action2}. 

\begin{remark}\label{remshuffle}
Suppose $\aaa$ is a category and $\phi: \prod_{i = 1}^k \aaa_i \lra \aaa$ is a functor. We naturally obtain an induced map
$\nnn_n(\prod_{i =1}^k \aaa_i) \lra \nnn_n(\aaa)$ which upon composition with \eqref{action2} gives rise to a \emph{$\phi$-shuffle} action
\begin{equation}\label{action3}
S_{(n_i)_i} \times \prod_{i = 1} ^k \nnn_{n_i}(\aaa_i) \lra \nnn_{n}(\aaa): (\beta, (a^i)_i) \longmapsto \beta^{(\phi)}((a^i)_i).
\end{equation}
Obviously, taking $\phi = 1_{\prod_{i = 1}^k \aaa_i}$, we recover the shuffle action \eqref{action2}. If $\phi$ is understood from the context, it will be omitted from the notation.
\end{remark}

\begin{example}\label{exshuffle}
Let $\AAA$ and $\BBB$ be small categories and put $\aaa_1 = \Fun(\AAA, \BBB)$, $\aaa_2 = \AAA$, $\aaa = \BBB$ and $$\phi: \Fun(\AAA, \BBB) \times \AAA \lra \BBB: (F, A) \longmapsto F(A).$$
Consider $a = (a_1: A_1 \lra A_0) \in \nnn_1(\AAA)$ and 
$$\epsilon = (\xymatrix{ {T_0} \ar[r]^{\epsilon_2} & {T_1} \ar[r]^{\epsilon_1} & {T_2} }) \in \nnn_2(\Fun(\AAA, \BBB)).$$
The three elements in $S_{2,1}$ correspond to the following three formal shuffles of $\underline{\epsilon}$ and $\underline{a}$:
$(a,\epsilon_1,\epsilon_2),(\epsilon_1,a,\epsilon_2)$ and $(\epsilon_1,\epsilon_2,a)$. The three corresponding shuffles in $\nnn_3(\Fun(\AAA, \BBB) \times \AAA)$ according to \eqref{action2} are given by:\\
$\xymatrix{ {T_0 \times A_0}\ar[r]^-{\epsilon_2 \times 1_{A_0}}&{T_1 \times A_0}\ar[r]^-{\epsilon_1 \times 1_{A_0}}&{T_2 \times A_0}\ar[r]^-{1_{T_2} \times a}&{T_2 \times A_1}  };\\ 
\xymatrix{ {T_0 \times A_0}\ar[r]^-{\epsilon_2 \times 1_{A_0}}&{T_1\times A_0}\ar[r]^-{1_{T_1} \times a}&{T_1 \times A_1}\ar[r]^-{\epsilon_1 \times 1_{A_1}}&{T_2 \times A_1}
}; \\ 
\xymatrix{ {T_0 \times A_0}\ar[r]^-{1_{T_0} \times a}&{T_0 \times A_1}\ar[r]^-{\epsilon_2 \times 1_{A_1}}&{T_1 \times A_1}\ar[r]^-{\epsilon_1 \times 1_{A_1}}&{T_2 \times A_1}
}.$

The three corresponding $\phi$-shuffles in $\nnn_3(\BBB)$ according to \eqref{action3} are given by:\\
$\xymatrix{ {T_0(A_0)}\ar[r]^-{\epsilon_2(A_0)}&{T_1{(A_0)}}\ar[r]^-{\epsilon_1(A_0)}&{T_2(A_0)}\ar[r]^-{T_2(a)}&{T_2(A_1)}  };\\ \xymatrix{ {T_0(A_0)}\ar[r]^-{\epsilon_2(A_0)}&{T_1{(A_0)}}\ar[r]^-{T_1(a)}&{T_1(A_1)}\ar[r]^-{\epsilon_1(A_1)}&{T_2(A_1)}
}; \\ \xymatrix{ {T_0(A_0)}\ar[r]^-{T_0(a)}&{T_0{(A_1)}}\ar[r]^-{\epsilon_2(A_1)}&{T_1(A_1)}\ar[r]^-{\epsilon_1(A_1)}&{T_2(A_1)}
}.$

\end{example}

\begin{remark}\label{shufflecompositionfunctor}
	Suppose that $\aaa_i=\mathsf{Fun}(\BBB_{k-i},\BBB_{k-i+1})$. Applying the natural composition of functors to each element $b_{l+1}$ in (\ref{shuffleelements}), we obtain
		\begin{align}
		b_{l+1}'= A^1_{\alpha_1}\circ \dots\circ a^j_{\alpha_j+1}\circ\dots \circ A^k_{\alpha_k} :  B'_{n-l-1}\lra B'_{n-l}
		\end{align} 
where $B_{n-l-1}' = A^1_{n_1 - \alpha_1}\circ \dots \circ A^j_{n_j - \alpha_j -1}  \circ \dots \circ A^k_{n_k - \alpha_k}$. Concatenating these morphisms, we obtain the simplex
$$\hat{b}'_n=(b'_1,\dots,b'_n)\in\nnn_n(\mathsf{Fun}(\BBB_0,\BBB_k)).$$ 
	
\end{remark}

\begin{example}\label{exshuffle2}
	Consider
	$$\epsilon = (\xymatrix{ {T_0} \ar[r]^{\epsilon_2} & {T_1} \ar[r]^{\epsilon_1} & {T_2} }) \in \nnn_2(\Fun(\BBB_0, \BBB_1))$$ and
	$$\xi = (\xymatrix{ {S_0} \ar[r]^{\xi} & {S_1}  }) \in \nnn_1(\Fun(\BBB_1, \BBB_2)).$$
	The shuffle products of $\xi$ and $\epsilon$ with respect to composition of functors corresponding to the formal sequences $(\xi\epsilon_1,\epsilon_2), (\epsilon_1,\xi,\epsilon_2), (\epsilon_1,\epsilon_2,\xi)$ are\\
	$\xymatrix{ {S_0T_0}\ar[r]^-{S_0\epsilon_2}&{S_0T_1}\ar[r]^-{S_0\epsilon_1}&{S_0T_2}\ar[r]^-{\xi T_2}&{S_1T_2}  };\\ \xymatrix{ {S_0T_0}\ar[r]^-{S_0\epsilon_2}&{S_0T_1}\ar[r]^-{\xi T_1}&{S_1T_1}\ar[r]^-{S_1\epsilon_1 }&{S_1T_2}  
	}; \\ \xymatrix{ {S_0T_0}\ar[r]^-{\xi T_0}&{S_1T_0}\ar[r]^-{S_1\epsilon_2}&{S_1T_1}\ar[r]^-{S_1\epsilon_1}&{S_1T_2}  
}.$

\end{example}

Now suppose $\beta \in \bar{S}_{(n_i)_i}$ is a conditioned shuffle. In this case it is possible to adapt the inductive procedure we just described in order to arrive at the datum, for $(a^i)_i$ as before, of a sequence
\begin{equation} \label{action3}
(\hat{c}_1, \dots, \hat{c}_k) \in \prod_{l = 1}^k \nnn_{\gamma_l}(\prod_{i = 1}^l \aaa_i)
\end{equation}
where the numbers $\gamma_l$ are determined by $\beta$ and satisfy $\sum_{l = 1}^k \gamma_l = n$. We put $\phi = 1$ and suppress it in the notations (the adaptation to arbitrary $\phi$ is easily made and will be used in the paper). Since $\beta$ is a conditioned shuffle, there are uniquely determined numbers $\gamma_l$ such that $\underline{b}_1 = a^1_1$, $\underline{b}_{\gamma_1 +1} = a^2_1$, \dots, $\underline{b}_{\sum_{i = 1}^l \gamma_i + 1} = a^{l+1}_1$, \dots, 
$\underline{b}_{\sum_{i = 1}^{k-1} \gamma_i + 1} = a^k_1$ and $\gamma_k = n - \sum_{i = 1}^{k-1} \gamma_i$. For $1 \leq l \leq k$ we now have that
for every $\sum_{i = 1}^{l-1} \gamma_i + 1 \leq \rho \leq \sum_{i = 1}^{l} \gamma_i$ there exists $1 \leq j \leq l$ and $t$ with $\underline{b}_{\rho} = a^j_t$. 
Moreover, for fixed $j$, there exists
$$a^{j, l} = (\xymatrix{ {A^j_{n_j -t +1 - m^l_j}} \ar[r]^-{a^j_{t + m^l_j -1}} & {\dots} \ar[r]^-{a^j_t} & {A^j_{n_j -t +1}}}) \in \nnn_{m^l_j}(\aaa_j)$$
such that the morphisms $a^j_s$ occuring in $a^{j,l}$ coincide precisely with the elements  occurring as $\underline{b}_{\rho}$ for $\sum_{i = 1}^{l-1} \gamma_i + 1 \leq \rho \leq \sum_{i = 1}^{l} \gamma_i$. Here we make the convention that if no $a^j_z$ occurs as such  $\underline{b}_{\rho}$, we put $a^{j,l} \in \nnn_0(\aaa_j)$ equal to the domain of $a^{j, l-1}$, or equal to $a^{j, l-1}$ in case $a^{j, l-1} \in \nnn_0(\aaa_j)$. We have $\sum_{j = 1}^l m^l_j = \gamma_l$.
As a consequence, there is a unique $\beta_l \in S_{(m^l_j)_j}$ such that 
$$\beta_l^{(0)}((\underline{a}^{j,l})_j) = (\underline{b}_{\rho})_{\sum_{i = 1}^{l-1} \gamma_i + 1 \leq \rho \leq \sum_{i = 1}^{l} \gamma_i}.$$
In \eqref{action3} we now put $\hat{c}_l = \beta_l((a^{j,l})_j) \in \nnn_{\gamma_l}(\prod_{i = 1}^l \aaa_i).$

\subsection{The Gerstenhaber-Schack complex}\label{parGScompl}
 Let $\uuu$ be a small category, $\aaa$ a prestack on $\uuu$ and $M$ a bimodule over $\aaa$. Let $\nnn(\uuu)$ denote the simplicial nerve of the small category  $\uuu$. Our standard notation for a $p$-simplex $\sigma \in \nnn(\uuu)_p$ is
 \begin{equation}\label{eqsigma0}
 \sigma = (\xymatrix{ {U_0} \ar[r]_-{u_1} & {U_{1}} \ar[r]_-{u_2} & {\dots} \ar[r]_-{u_{p-1}} & {U_{p-1}} \ar[r]_{u_p} & {U_p}}).
 \end{equation}
 
 If confusion can arise, we write $U_i = U^{\sigma}_i$ and $u_i = u^{\sigma}_i$ instead. We also write $\sigma=(u_1,\dots,u_p)$ for short.
 
 For $\sigma \in \nnn_p(\uuu)$, we obtain a functor
 $$\sigma^{\ast} = (u_p \dots u_2 u_1)^{\ast}: \aaa(U_p) \lra \aaa(U_0)$$
 and a functor
 $$\sigma^{\star} = u_1^{\ast}u_2^{\ast} \dots u_p^{\ast}: \aaa(U_p) \lra \aaa(U_0).$$

For each $1\le k\le p-1$, denote by $L_k(\sigma)$ and $R_k(\sigma)$ the following simplices $$\begin{matrix}L_k(\sigma)=(\xymatrix{ {U_0} \ar[r]_-{u_1} & {U_{1}} \ar[r]_-{u_2} & {\dots} \ar[r]_-{u_{p-1}} & {U_{k-1}} \ar[r]_{u_k} & {U_k}})\\ R_k(\sigma)=(\xymatrix{ {U_{k}} \ar[r]_-{u_{k+1}} & {U_{k+2}} \ar[r]_-{u_{k+2}} & {\dots} \ar[r]_-{u_{p-1}} & {U_{p-1}} \ar[r]_{u_p} & {U_p}})\end{matrix}$$
 
 We consider the following natural isomorphisms:
 
 \begin{equation}\label{cc}
 c^{\sigma,k} = c^{u_k\cdots u_1,u_{p}\cdots u_{k+1}}: (L_k\sigma)^{\ast} (R_k\sigma)^{\ast} \lra \sigma^{\ast}
 \end{equation}
 
 \begin{equation}\label{epsilon}
 \epsilon^{\sigma,k} = {u_1^{\ast}} \cdots u_{k-1}^*c^{u_k, u_{k+1}}u_{k+2}^* \cdots {u_p^{\ast}}: \sigma^{\star} \lra u_1^*\cdots(u_{k+1}u_{k})^*\cdots u_p^*
 \end{equation}
 For $A\in\mathrm{Ob}(\aaa(U_p))$, we write $c^{\sigma,k,A}=c^{\sigma,k}(A)$ and
 $\epsilon^{\sigma,k,A}=\epsilon^{\sigma,k}(A)$.
 
 For the category $\aaa(U)$, $U \in \uuu$, we use the following standard notation for a $q$-simplex $a \in \nnn(\aaa(U))_q$:
 \begin{equation}\label{equationa0}
     a=(\xymatrix{{A_0}\ar[r]^-{a_q}&{A_1} \ar[r]^-{a_{q-1}}&\cdots \ar[r]^-{a_2}&A_{q-1}\ar[r]^-{a_1}& A_q  }).
\end{equation}
We also write $a=(a_1,\dots,a_q)$ for short.

Let
	$$\CC^{\sigma, A}(\aaa,M) = \Hom_k\big(\aaa(U_p)(A_{q-1}, A_{q}) \otimes \dots \otimes \aaa(U_p)(A_{0}, A_{1}),M^{U_0}( \sigma^{\star}A_0,\sigma^{\ast}A_q)\big).$$
	and  put
	$$\CC^{\sigma, q}(\aaa,M) =  \prod_{A \in \aaa(U_p)^{q+1}} \CC^{\sigma, A}(\aaa,M),$$
	$$\CC^{p,q}(\aaa,M)=\prod_{\sigma\in\nnn_p(\uuu)}\CC^{\sigma,q}(\aaa,M).$$
Then we obtain the double object
\begin{equation}\label{double}
	\CC_\mathrm{GS}^n(\aaa,M)=\prod_{p+q=n}\CC^{p,q}(\aaa,M)
	\end{equation}
The usual Hochschild differential defines vertical maps
	$$d_{\mathrm{Hoch}}: \CC^{p, q-1}(\aaa) \lra \CC^{p, q}(\aaa).$$
Precisely, given $(\phi^\sigma)_\sigma\in\CC^{p,q}(\aaa,M)$, for each $p$-simplex $\sigma$ and for $(a_1,\dots,a_q)\in \aaa(U_p)(A_{q-1}, A_{q}) \otimes \dots \otimes \aaa(U_p)(A_{0}, A_{1})$, then we have
	$$(d_\mathrm{Hoch}\phi)^\sigma(a_1,\dots,a_q)=\displaystyle\sum_{i=0}^q(-1)^i(d_\h^i\phi)^\sigma(a_1,\dots,a_q)$$
where 
	$$(d_\mathrm{Hoch}^i\phi)^\sigma(a_1,\dots,a_q)=
		\begin{cases}
			\sigma^*(a_1)\phi^\sigma(a_2,\dots,a_q)& \text{ if } i=0 \\
			\phi^\sigma(a_1,\dots,a_{i}a_{i+1},\dots, a_q) &\text{ if } 1\le i\le q-1 \\ 
			\phi^\sigma(a_{q-1},\dots,a_1)\sigma^\star(a_q) & \text{ if } i=q .
		\end{cases}
	$$
We also write $\phi^\sigma(d_\mathrm{Hoch}^i(a_q,\dots,a_1))$ instead of $(d_\mathrm{Hoch}^i(\phi))^\sigma(a_q,\dots,a_1)$.\\

As a part of the simplicial structure of $\nnn(\uuu)$, we have maps
	$$\partial_i: \nnn_{p+1}(\uuu) \lra \nnn_p(\uuu): \sigma \longmapsto \partial_i \sigma$$
for $i = 0, 1, \dots, p+1$. For 
	$\sigma = (\xymatrix{ {U_0} \ar[r]_-{u_1} & {U_{1}} \ar[r]_-{u_2} & {\dots} \ar[r]_-{u_{p}} & {U_{p}} \ar[r]_{u_{p+1}} & {U_{p+1}}})$, 
we have
	$$\partial_{p+1}\sigma = (\xymatrix{ {U_0} \ar[r]_-{u_1} & {U_{1}} \ar[r]_-{u_2} & {\dots} \ar[r]_-{u_{p-1}} & {U_{p-1}} \ar[r]_{u_p} & {U_p}})$$
	$$\partial_{0}\sigma = (\xymatrix{ {U_1} \ar[r]_-{u_2} & {U_{2}} \ar[r]_-{u_3} & {\dots} \ar[r]_-{u_{p}} & {U_{p}} \ar[r]_{u_{p+1}} & {U_{p+1}}})$$
and
	$$\partial_i\sigma = (\xymatrix{ {U_{0}} \ar[r]_{u_1} & {\dots} \ar[r] & {U_{i-1}} \ar[r]_{u_{i+1} u_{i}} & {U_{i+1}} \ar[r] & {\dots} \ar[r]_{u_{p+1}} & {U_{p+1}}})$$
for $i = 1, \dots, p$. Each $\partial_i$ gives rise to a map
$$d_\mathrm{simp}^i: \CC^{p-1,q}(\aaa,M) \lra \CC^{p,q}(\aaa,M)$$
given by 
$$(d_\mathrm{simp}^i(\phi))^\sigma(a_1,\dots,a_q):=\begin{cases}
c^{\sigma,1,A_q}M^{u_1}\phi^{\partial_0\sigma}(a_1,\dots,a_q)  & \text{ if }\ \  i=0\\
\phi^{\partial_i\sigma}(a_1,\dots,a_q)\epsilon^{\sigma,i,A_0} & \text{ if }\ \  1\le i\le p\\
c^{\sigma,p-1,A_q}\phi^{\partial_p\sigma}(u_p^*a_1,\dots,u_p^*a_q)  & \text{ if }\ \  i=p.\\
\end{cases}$$
Hence we obtain the horizontal maps $$d_{\mathrm{simp}} = \sum_{i = 0}^{p} (-1)^i d_{\mathrm{simp}}^i : \CC^{p-1,q}(\aaa,M) \lra \CC^{p,q}(\aaa,M).$$
We define the maps 
$$ d_{\mathrm{GS}} = d_{\mathrm{Hoch}} + (-1)^nd_{\mathrm{simp}}:\CC^{n-1}(\aaa,M)\lra\CC^{n}(\aaa,M).$$

Now if $c^{u,v}=1$ for all $u:V\lra U, v:W\lra V$, then $\aaa$ is a presheaf of $k$-linear categories. It is easy to check that $d_\mathrm{Hoch}^2=d_\mathrm{simp}^2=d_\mathrm{Hoch}d_\mathrm{simp}-d_\mathrm{simp}d_\mathrm{Hoch}=0$, so $d_\mathrm{GS}^2=0$. 
In analogy with \cite{gerstenhaberschack1}, if $k$ is a field one shows that $(\CC^\bullet(\aaa,M),d_\mathrm{GS})$ computes $\Ext$ groups of bimodules: $$HH^n_{\mathrm{GS}}(\aaa,M) = H^n(\CC^\bullet(\aaa,M),d_\mathrm{GS})=\Ext_{\aaa-\aaa}^n(\aaa,M).$$  
Moreover, by analogous computations as in \cite[\S 2.21]{dinhvanliulowen}, it is seen that the second cohomology group $HH^2_{\mathrm{GS}}(\aaa)$ naturally controls the first order deformations of the presheaf $\aaa$ as a prestack. 

\subsection{The new differential}\label{parnewdif}
When $\aaa$ is a prestack with non-trivial twists $c^{u,v}$, then for $d_{\mathrm{GS}}$ defined as in the previous section, we have $d_\mathrm{GS}^2\ne0$ because $d_\mathrm{simp}^2\ne 0$. To fix this problem we add new components to $d_\mathrm{GS}$ to obtain the new  differential 
\begin{equation}\label{newdifin}
d=d_0+d_1+\dots+d_n:\CC_{\mathrm{GS}}^{n-1}(\aaa,M)\lra \CC_{\mathrm{GS}}^n(\aaa,M)
\end{equation}
where $d_0=d_\mathrm{Hoch},d_1=(-1)^nd_\mathrm{simp}$ as above. 
The cohomology with respect to the new differential is denoted $$HH^n_{\mathrm{GS}}(\aaa,M) = H^n\CC^{\bullet}_{\mathrm{GS}}(\aaa,M).$$

Let $\aaa$ be a prestack. Consider a simplex $\sigma=(u_1,\dots,u_n) \in \nnn_n(\uuu)$ with $n \geq 2$. For every $u:V\lra U,\ v:W\lra V$ we have the natural isomorphism $c^{u,v}:v^*u^*\lra(uv)^*$. From these isomorphisms we inductively construct a set $$\ppp(u_1, \dots, u_n) \subseteq \nnn_{n-1}(\Fun(\aaa(U_n), \aaa(U_0)))$$
of simplices $r$ with source $u_1^*u_2^*\cdots u_n^*$ and target $(u_nu_{n-1}\cdots u_1)^*$. 
Our standard notation for a simplex $r$ of natural transformations is
$$r = (\xymatrix{ {T_0} \ar[r]^{r_{n-1}} & {T_1} \ar[r]^{r_{n-2}} & {T_2} \ar[r] & {\dots} \ar[r]^{r_1} & {T_{n-1}} })$$
which is abbreviated to $r = (r_1, \dots, r_{n-1})$. 
Elements of $\ppp(u_1, \dots, u_n)$ are called \emph{paths} from $u_1^*u_2^*\cdots u_n^*$ to $(u_nu_{n-1}\cdots u_1)^*$. Further, we define a \emph{sign} map 
$$\mathrm{sign}: \ppp(u_1, \dots, u_n) \lra \{1, -1\}: r \longmapsto \mathrm{sign}(r).$$

We start with $n=2$. Consider $c^{u_1,u_2}:u_1^*u_2^*\lra (u_2u_1)^*$. We put $\mathcal{P}(u_1,u_2):=\{(c^{u_1,u_2})\}$ and we set $\mathrm{sign}(c^{u_1,u_2})=-1$.

For $n > 2$, given $\sigma=(u_1,\dots,u_n)$, for each $i=1, \dots, n-1$, consider the natural isomorphism
	$\epsilon^{\sigma,i} = {u_1^{\ast}} \cdots c^{u_i, u_{i+1}} \cdots {u_n^{\ast}}$ as defined in \eqref{epsilon}
and put 
 	$$\mathrm{sign}{(\epsilon_i)}=(-1)^i.$$ 
For each path $r=(r_1,\dots,r_{n-2})\in\mathcal{P}(u_1,\dots,u_{i-1},u_{i+1}u_i,u_{i+2},\dots,u_n)$, the simplex $(r_1,\dots,r_{n-2},\epsilon_i)$ is called a \emph{path} from $u_1^*u_2^*\cdots u_n^*$ to $(u_nu_{n-1}\cdots u_1)^*$ and
 $\mathcal{P}(u_1,\dots,u_n)$ is defined to be the set of all such paths. Thus,
	$$\mathcal{P}(u_1,\dots,u_n)=\big\{(r_1,\dots,r_{n-2},\epsilon^{\sigma,i}):\ 1\le i\le n-1 \text{ and } r\in \mathcal{P}(\partial_i\sigma) \big\}.$$
	For a path $r=(r_1,\dots,r_{n-1})$, we define
	  $$(-1)^r\equiv\mathrm{sign}(r)=\prod_{i=1}^{n-1}\mathrm{sign}(r_i).$$
	  For a permutation $\beta \in S_n$, we similarly denote $(-1)^{\beta} \equiv \mathrm{sign}(\beta)$ for the standard sign of permutations and denote $(-1)^{r+\beta}=(-1)^r(-1)^\beta.$

\begin{example}
Given $\sigma=(u_1,u_2,u_3)$, there are two paths from $u_1^*u_2^*u_3^*$ to $(u_3u_2u_1)^*$:
$$\mathcal{P}=\{r=(c^{u_2u_1,u_3},c^{u_1,u_2}u_3^*),\ s=(c^{u_1,u_3u_2},u_1^*c^{u_2,u_3}) \}$$ and $\mathrm{sign}(r)=1,\ \mathrm{sign}(s)=-1$. 
\end{example}

There are $(n-1)!$ paths in $\mathcal{P}(u_1,\dots,u_n)$, for each path $r=(r_1,r_2\dots,r_{n-1})$ denote the isomorphism $||r||=r_1r_2\cdots r_{n-1}$. 
\begin{lemma}\label{absolute}
	Given $n$-simplex $\sigma=(u_1,\dots,u_n)$. Let $r=(r_1,r_2\dots,r_{n-1})$ and $s=(s_1,s_2\dots,s_{n-1})$ be two arbitrary paths in $\mathcal{P}(u_1,\dots,u_n)$. Then $||r||=||s||$. 
\end{lemma}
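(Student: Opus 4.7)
I proceed by induction on $n$. For $n=2$ the claim is immediate since $\ppp(u_1,u_2)$ is a singleton. For the inductive step, write $r=(r',\epsilon^{\sigma,i})$ and $s=(s',\epsilon^{\sigma,j})$ with $r'\in\ppp(\partial_i\sigma)$ and $s'\in\ppp(\partial_j\sigma)$. When $i=j$, the induction hypothesis applied to the $(n-1)$-simplex $\partial_i\sigma$ yields $||r'||=||s'||$, and composing on the right with $\epsilon^{\sigma,i}$ gives $||r||=||s||$. The substantive content thus lies in the case $i\neq j$.

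\textbf{Reducing $i\neq j$ to $i=j$.} It suffices to show that for every path $r=(r',\epsilon^{\sigma,i})$ and every index $j\neq i$ there exists a path $\tilde r=(\tilde r',\epsilon^{\sigma,j})$ with $||r||=||\tilde r||$; iteration then reduces back to the $i=j$ case handled above. By further iteration I may restrict to the subcases $|i-j|\geq 2$ and $|i-j|=1$. In the disjoint case $|i-j|\geq 2$, the natural transformations $\epsilon^{\sigma,i}$ and $\epsilon^{\sigma,j}$ apply $c^{u_i,u_{i+1}}$ and $c^{u_j,u_{j+1}}$ at non-overlapping positions of the horizontal composite $u_1^*\cdots u_n^*$, so their order inside the path can be interchanged by the interchange law for horizontal and vertical composition of natural transformations; the induction hypothesis then matches the preceding entries. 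In the adjacent case $j=i+1$, restricting attention to the three factors $u_i^*u_{i+1}^*u_{i+2}^*$ and peeling off the identities on the surrounding functors $u_k^*$ with $k\notin\{i,i+1,i+2\}$, the required equality reduces precisely to the $n=3$ instance, which is the coherence axiom \eqref{compatibility} applied with $(u,v,w)=(u_{i+2},u_{i+1},u_i)$.

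\textbf{Main obstacle.} Conceptually the lemma is a direct manifestation of Mac Lane's coherence theorem for bicategories, viewed through the pseudofunctor $\aaa$: paths in $\ppp(u_1,\dots,u_n)$ correspond to binary bracketings of the word $u_n\cdots u_1$, and all composites of associators joining the fully-expanded source $u_1^*u_2^*\cdots u_n^*$ to the fully-contracted target $(u_n\cdots u_1)^*$ must agree, given the pentagon \eqref{compatibility}. The only non-routine work is the bookkeeping translating the combinatorics of $\ppp(u_1,\dots,u_n)$ into this framework and isolating the three-factor window where \eqref{compatibility} applies; once the inductive scheme above is in place, everything else is naturality and the interchange law.
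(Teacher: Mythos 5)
Your proof is correct and follows essentially the same route as the paper: induction on $n$, comparing the first contractions $\epsilon^{\sigma,i}$ and $\epsilon^{\sigma,j}$ of the two paths and changing the index of the leading contraction, with the adjacent change $i\leftrightarrow i+1$ given by the coherence axiom \eqref{compatibility} whiskered by the surrounding $u_k^*$ and the induction hypothesis absorbing the remainder of the path (this is exactly the paper's argument via the auxiliary path $h$ and the commuting square relating $h_{n-2},t_{n-2},\epsilon^{\sigma,i},\epsilon^{\sigma,i+1}$). The only differences are cosmetic: the paper iterates adjacent swaps and so never needs your interchange-law case $|i-j|\ge 2$, and in your adjacent case you should, just as you do in the disjoint case, first invoke the induction hypothesis on $\partial_i\sigma$ to bring the contraction of $(u_{i+1}u_i)^*$ with $u_{i+2}^*$ to the head of the tail of the path — only then does the claim whisker down to the $n=3$ instance of \eqref{compatibility}.
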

\begin{proof}
	By the coherence condition (\ref{compatibility}) our lemma is true for $n=3$. For $n>3$,  we assume that $r_{n-1}=\epsilon^{\sigma,i}$ and $s_{n-1}=\epsilon^{\sigma,j}$ for some $i\le j$. If $i=j$ then $r_{n-1}=s_{n-1}$, by induction hypothesis we have $||r||=||s||$. If $i<j$, it is sufficient to prove that $||r||=||t||$ for some path $t=(t_1,\dots,t_{n-1})$ in which $t_{n-1}=\epsilon^{\sigma,i+1}$. Thus, let $h=(h_1,\dots,h_{n-2})$ be a path in  $\mathcal{P}(u_1,\dots,u_{i+1}u_i,\dots,u_n)$ such that $h_{n-2}=u_1^*\cdots u_{i-1}^*c^{(u_{i+1}u_i,u_{i+2})}u_{i+3}^*\cdots u_n^*$, 
	 by the induction hypothesis $$h_1\cdots h_{n-2}=r_1\cdots r_{n-2}.$$
	Let $t_{n-2}=u_1^*\cdots u_{i-1}^*c^{(u_i,u_{i+2}u_{i+1})}u_{i+3}^*\cdots u_n^*$, again by (\ref{compatibility}) we have the commutative diagram 
	$$\xymatrix{
		{u_1^*\cdots u_{i}^*u_{i+1}^*u_{i+2}^*\cdots u_n^*\  }\ar[r]^-{t_{n-1}}\ar[d]^-{r_{n-1}}&{\ \ u_1^*\cdots u_{i}^*(u_{i+2}u_{i+1})^*u_{i+3}^*\cdots u_n^* }\ar[d]^-{t_{n-2}}\\
		{u_1^*\cdots u_{i-1}^*(u_{i+1}u_i)^*u_{i+2}^*\cdots u_n^*\ }\ar[r]^-{h_{n-2}}&{\ u_1^*\cdots u_{i-1}^*(u_{i+2}u_{i+1}u_i)^*u_{i+3}^*\cdots u_n^*}
	}$$
	Choose $t=(h_1,\dots,h_{n-3},t_{n-2},t_{n-1})$, then $||t||=||(h,r_{n-1})||=||r||$.
\end{proof}

Given a simplex $\sigma=(u_1,\dots,u_n)$, let $r=(r_1,\dots,r_{n-1})$ be a path in $\ppp(\sigma)$. For each $1\le k\le n-2$, assume that $r_{k+1}=\epsilon^{\gamma,i}$ for some simplex $\gamma=(v_1,\dots,v_{k+2})$ and  $1\le i\le k+1$. Then $r_{k}=\epsilon^{\partial_i\gamma,j}$ for some $1\le j\le k$. We put 
$$\left[\begin{matrix}
r'_{k+1}=\epsilon^{\gamma,j} \text{ and } r'_{k}=\epsilon^{\partial_j\gamma, i-1} \text{ if } i> j;\\
r'_{k+1}=\epsilon^{\gamma,j+1} \text{ and } r'_{k}=\epsilon^{\partial_j\gamma, i} \text{ if } i\le j.
\end{matrix}\right.$$

Denote by $\mathrm{flip}(r,k)$ the path $(r_1,\dots,r_{k-1},r'_k,r'_{k+1},r_{k+2},\dots,r_{n-1})$ in $\ppp(\sigma)$. It is easy to see that $\mathrm{flip}(\mathrm{flip}(r,k),k)=r$ and
\begin{equation}\label{flipequation2}
\mathrm{sign}(\mathrm{flip}(r,k))=-\mathrm{sign}(r).
\end{equation}
 Due to Lemma \ref{absolute}, we have 
\begin{equation}\label{flipequation1}
r_k'r_{k+1}'=r_k r_{k+1}.
\end{equation}

In the next lemma, the shuffle product of natural transformations is taken with respect to the composition of functors as in Example (\ref{exshuffle2}).

\begin{lemma}\label{jointpaths}
	Given an $n$-simplex $\sigma=(u_1,\dots,u_n)$. Then, 
	\begin{enumerate}
		\item Consider two paths $r=(r_1,\dots,r_{n-k-1})\in\ppp(R_k(\sigma)),s=(s_1,\dots,s_{k-1})\in\ppp(L_k(\sigma))$. For each $\beta\in S_{n-k-1,k-1}$, the simplex $\omega=(c^{\sigma,k},\beta(r,s))$ is a path in $\ppp(\sigma)$. Moreover $$(-1)^\omega=(-1)^{n-k}(-1)^\beta(-1)^r(-1)^s.$$
		\item Consider a path $\omega=(\omega_1,\dots,\omega_{n-1})$ in $\ppp(\sigma)$ in which $\omega_1=c^{\sigma,k}$. There exist unique paths $r=(r_1,\dots,r_{n-k-1})\in\ppp(R_k(\sigma)),\ s=(s_1,\dots,s_{k-1})\in\ppp(L_k(\sigma))$ and $\beta\in S_{n-k-1,k-1}$ such that $\omega=(c^{\sigma,k},\beta(r,s))$.
	 \end{enumerate}
	 
\end{lemma}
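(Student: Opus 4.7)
The plan is to argue both parts by simultaneous induction on $n$, using the recursive definition of $\ppp$ together with the observation that each arrow in a path is, via the obvious embedding into the ambient string of functors, of the form $\epsilon^{\tau, j}$ for some sub-simplex $\tau$ and index $j$. The base case $n=2$ is immediate: the only split is $k=1$, both $r$ and $s$ are empty paths (so $(-1)^r=(-1)^s=1$) and $\beta$ is trivial, whence $\omega = (c^{\sigma,1}) = (c^{u_1,u_2})$ is the unique element of $\ppp(u_1,u_2)$ and the sign formula reads $-1 = (-1)^{2-1}$.

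For the inductive step of part (1), I first examine the first arrow $\omega_{n-1}$ of $\omega$, which by the definition of the shuffle is either $r_{n-k-1}$ or $s_{k-1}$. Applying the recursive definition of paths to $r \in \ppp(R_k\sigma)$ and $s \in \ppp(L_k\sigma)$ gives $r_{n-k-1}=\epsilon^{R_k\sigma,i'}$ or $s_{k-1}=\epsilon^{L_k\sigma,j'}$; embedded in $\sigma^\star$ these act as $\epsilon^{\sigma,k+i'}$ and $\epsilon^{\sigma,j'}$ respectively, so $\omega_{n-1}$ is always of the required form. The argument hinges on the identities
\[
L_k(\partial_{k+i'}\sigma) = L_k\sigma,\quad R_k(\partial_{k+i'}\sigma) = \partial_{i'}R_k\sigma,\quad c^{\partial_{k+i'}\sigma,k}=c^{\sigma,k},
\]
together with the analogous $L_{k-1}(\partial_{j'}\sigma) = \partial_{j'}L_k\sigma$, $R_{k-1}(\partial_{j'}\sigma) = R_k\sigma$, $c^{\partial_{j'}\sigma,k-1}=c^{\sigma,k}$. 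These allow me to recognise the tail $(\omega_1,\ldots,\omega_{n-2})$ as being of the exact form covered by the inductive hypothesis applied to $\partial_{k+i'}\sigma$ or $\partial_{j'}\sigma$, so $\omega\in\ppp(\sigma)$.

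For part (2), given $\omega \in \ppp(\sigma)$ with $\omega_1=c^{\sigma,k}$, the recursive definition already produces a unique $i$ with $\omega_{n-1}=\epsilon^{\sigma,i}$ and $(\omega_1,\ldots,\omega_{n-2})\in\ppp(\partial_i\sigma)$. When $i>k$ this tail still has first entry $c^{\partial_i\sigma,k} = c^{\sigma,k}$, so the inductive part (2) yields unique $(r',s,\beta')$; I then reconstruct $\omega$ by setting $r=(r', \epsilon^{R_k\sigma,i-k})$ and extending $\beta'$ to the shuffle $\beta$ that sends the newly introduced slot to the final output position. The case $i\le k$ is symmetric, extending $s$ instead of $r$. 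Uniqueness propagates from the uniqueness at each step.

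The genuinely delicate point, and main obstacle, is the sign bookkeeping. In Case A the sign of $\omega_{n-1}$ viewed in $\sigma$ is $(-1)^{k+i'}$, but its sign within $r$ is $(-1)^{i'}$, so $(-1)^r=(-1)^{i'}(-1)^{r'}$. Combining with the inductive sign formula for $(\omega_1,\ldots,\omega_{n-2})$ produces $\mathrm{sign}(\omega)=(-1)^{n-1}(-1)^{\beta'}(-1)^r(-1)^s$ after the $(-1)^{2i'}$ cancels. It then remains to verify that moving the largest element of the first shuffle group to the final output position introduces exactly $k-1$ inversions with the second group, so $(-1)^\beta=(-1)^{k-1}(-1)^{\beta'}$; using $(-1)^{n-1+k-1}=(-1)^{n-k}$ this matches the claimed formula. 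Case B is entirely analogous.
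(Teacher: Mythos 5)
The paper states Lemma \ref{jointpaths} without proof, so there is no argument of the authors to compare yours against; judging it on its own terms, your overall strategy (simultaneous induction on $n$, peeling off the first-applied arrow $\omega_{n-1}$ and using that contractions coming from $r$ act at positions $>k$ and those from $s$ at positions $<k$) is sound, and I checked the sign bookkeeping for part (1): in Case A one has $\mathrm{sign}(\omega)=(-1)^{k+i'}(-1)^{(n-1)-k}(-1)^{\beta'}(-1)^{r'}(-1)^{s}$, $(-1)^{r}=(-1)^{i'}(-1)^{r'}$, and $\mathrm{inv}(\beta)=\mathrm{inv}(\beta')+(k-1)$, which combine to $(-1)^{n-k}(-1)^{\beta}(-1)^{r}(-1)^{s}$; in Case B the element appended at the last output slot creates no new inversions, so $(-1)^{\beta}=(-1)^{\beta'}$ and the formula again comes out right. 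A small point you leave implicit is the treatment of $k=1$ and $k=n-1$, where $r$ or $s$ is the empty path attached to a $1$-simplex with sign $+1$; this convention is needed (the paper uses exactly these cases in Step 3 of Theorem \ref{propnewdif}) and should be folded into the induction rather than only stating the base case $n=2$.

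The genuine gap is in part (2), in the case $i=k$, which your dichotomy ``$i>k$'' versus ``$i\le k$'' silently absorbs into the second branch. If $\omega_{n-1}=\epsilon^{\sigma,k}$, i.e. the first contraction straddles the cut, the ``symmetric'' recipe breaks down: you would have to write $s_{k-1}=\epsilon^{L_k\sigma,k}$, which does not exist since $L_k\sigma$ has only $k$ entries (admissible indices are $1,\dots,k-1$), and the tail's first entry $c^{\sigma,k}$ is not of the form $c^{\partial_k\sigma,k'}$ for any $k'$, so the inductive hypothesis cannot be invoked either. What is actually needed is to show that $i=k$ cannot occur when $\omega_1=c^{\sigma,k}$: every arrow of a path contracts two adjacent entries of the current simplex, each entry being the composite of a consecutive block of the $u_j$'s, so once $u_k$ and $u_{k+1}$ are merged they remain in a common block, the terminal $2$-simplex has an entry containing both, and hence $\omega_1=c^{\sigma,j}$ with $j\neq k$ --- contradicting $\omega_1=c^{\sigma,k}$ (read, as the paper implicitly does when counting $(n-1)!$ paths and assigning signs, at the level of formal paths, i.e. sequences of contraction data rather than the underlying natural transformations, since literal coincidences of the $\epsilon$'s would also threaten your ``unique $i$'' and the uniqueness claim itself). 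Since part (2) is precisely the assertion that a path ending in $c^{\sigma,k}$ never contracts across the cut at $k$, this excluded case is the crux and must be argued, not subsumed; the block argument just sketched repairs it with a few lines, after which your reconstruction and uniqueness propagation go through.
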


 Now we are able to define the components $d_j(j\ge 2)$ of the differential $d$ from \eqref{newdifin} in formula \eqref{defnewdif} below with
 $d_j:\CC_{\mathrm{GS}}^{p,q}(\aaa,M)\lra \CC_{\mathrm{GS}}^{p+j,q+1-j}(\aaa,M)$.
 $$\xymatrix{
 	\CC^{p,q+1}&\\
 	\CC^{p,q}\ar[u]^-{d_0}\ar[r]^-{d_1}\ar[rrd]^-{d_2} \ar[rrrrddd]_{d_{q+1}}&\CC^{p+1,q}\\
 	& &\CC^{p+2,q-1}\\
 	& & & { \cdots}\\
 	& & & & \CC^{p + q + 1,0}
 }$$
 
Consider $\phi\in\CC_{\mathrm{GS}}^{p,q}(\aaa,M)$. Let   $\sigma=(u_1,\dots,u_{p+j})$ be a $(p+j)$-simplex as in $(\ref{eqsigma0})$. Given  $A_0,\dots,A_t\in \mathrm{Ob}(\aaa(U_{p+j}))$ where $t=q+1-j$, let $a=(a_1,\dots,a_t)$ where $a_i\in\aaa(U_p)(A_{t-i},A_{t-i+1})$ as in (\ref{equationa0}). We define 
\begin{equation} \label{defnewdif}
(d_j(\phi))^\sigma(a_1,\dots,a_t)=\sum_{\substack{r\in\mathcal{P}(R_{j}(\sigma))\\\beta\in S_{t,j-1}}}(-1)^r(-1)^\beta(-1)^tc^{\sigma,p,A_t}\phi^{L_p(\sigma)}(\beta(a,r))
\end{equation}
where $\beta(a,r)$ is the shuffle product by $\beta$ of $a=(a_1,\dots,a_t)$ and $r=(r_1,\dots,r_{j-1})$, with respect to the evaluation of functors (see Remark \ref{remshuffle} and Example \ref{exshuffle}).

\begin{theorem}\label{propnewdif}
$d\circ d=0$.
\end{theorem}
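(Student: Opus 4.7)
The proof is by bidegree decomposition and a combinatorial cancellation. Since $d_j : \CC^{p,q}_{\mathrm{GS}}(\aaa,M) \to \CC^{p+j,q+1-j}_{\mathrm{GS}}(\aaa,M)$, the composite $d_i d_j$ lands in bidegree shift $(i+j,\,2-i-j)$, so
$$d\circ d = \sum_{n\geq 0} D_n, \qquad D_n = \sum_{i+j = n} d_i d_j,$$
and it suffices to prove $D_n = 0$ for each $n$. For $n = 0$, $D_0 = d_{\mathrm{Hoch}}^2 = 0$ is the classical Hochschild identity applied pointwise in each fibre $\aaa(U_p)$. For $n = 1$, $D_1 = d_0 d_1 + d_1 d_0$ reduces, after accounting for the sign $(-1)^n$ built into $d_1$, to the commutation $d_{\mathrm{Hoch}}\, d_{\mathrm{simp}} = d_{\mathrm{simp}}\, d_{\mathrm{Hoch}}$; this is a direct verification, since the restriction functors $u^{\ast}$ used in $d_{\mathrm{simp}}$ are $k$-linear and the natural isomorphisms $c^{\sigma,k}$, $\epsilon^{\sigma,k}$ are natural, so they commute with the Hochschild cobar operations on either side.

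The content of the theorem lies in the case $n \geq 2$. I fix $\phi \in \CC^{p,q}_{\mathrm{GS}}(\aaa, M)$, a $(p+n)$-simplex $\tau$, and an input $b$ of length $q+2-n$, and expand $(D_n \phi)^{\tau}(b)$ by applying \eqref{defnewdif} to each $d_i d_j$ with $i + j = n$ and $i, j \geq 2$. The expansion produces a quadruple sum indexed by (a) a path $s$ of length $i-1$ on the last $i$ edges of $\tau$, (b) a path $r$ of length $j-1$ on the edges $(u_{p+1}, \ldots, u_{p+j})$, and (c) two shuffles $\gamma \in S_{q+2-n,\,i-1}$, $\beta \in S_{q+1-j,\,j-1}$; a typical summand is
$$\pm\, c^{\tau,\,p+j}\, c^{L_{p+j}(\tau),\,p}\, \phi^{L_p(\tau)}\bigl(\beta(\gamma(b,s), r)\bigr).$$
Lemma \ref{jointpaths} is the pivotal reorganisational tool: part~(1) shows that every triple $(r, s, \text{inner shuffle})$ assembles into a path on the last $n$ edges of $\tau$ starting with the twist $c^{R_n\tau,\, j}$, and part~(2) provides the inverse bijection. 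Consequently the subsum over $i, j \geq 2$ reassembles into a single sum indexed by paths on the last $n$ edges of $\tau$, stratified by the position of the initial twist. The extremal contributions --- those with $i$ or $j$ in $\{0,1\}$, i.e.\ involving $d_0 = d_{\mathrm{Hoch}}$ or $d_1 = \pm d_{\mathrm{simp}}$ --- supply the paths whose first or last twist is of type $\epsilon^{\cdot,1}$ or $c^{\cdot, n-1}$, together with the purely Hochschild/simplicial face contributions acting on the extremal entries of $b$.

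Cancellation is then obtained by a sign-reversing involution on the resulting set of terms. The core ingredient is the flip operation of \eqref{flipequation1}--\eqref{flipequation2}, which swaps two adjacent path entries, preserves the composed natural isomorphism by Lemma \ref{absolute}, and reverses $\mathrm{sign}(r)$. Coupled with a compatible shuffle re-indexing, it pairs adjacent interior terms of opposite sign. Terms whose assembled path has an extremal leading or trailing twist pair with the $d_1$-contributions at the corresponding simplicial face, while Hochschild-type summands (in which an extremal input of $b$ is absorbed into a twist-corrected restriction) pair with the $d_0$-contributions via the usual Hochschild face identities applied inside $\aaa(U_{p+n})$. The main obstacle will be the sign accounting: the $(-1)^{r+\beta+t}$ of \eqref{defnewdif}, the $(-1)^{n-k}(-1)^\beta(-1)^r(-1)^s$ of Lemma \ref{jointpaths}(1), the $(-1)^n$ built into $d_1$, and the shuffle parities must all align under the proposed involution. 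Once the terms are organised via Lemma \ref{jointpaths} and the flip, the cancellation becomes essentially formal; the real labour is in verifying the sign matching unambiguously across the various cases.
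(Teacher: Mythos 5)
Your strategy coincides with the paper's own proof: fix the target bidegree $\sum_{i+j=n} d_i d_j$, expand everything, reassemble the $i,j\ge 2$ contributions via Lemma \ref{jointpaths} against the terms of $d_n d_{\mathrm{Hoch}}$, match the $d_0$- and $d_1$-contributions with the terms whose extremal entry is an input of $b$ or a leading/trailing twist, and cancel the remainder by the shuffle-transposition and flip involutions \eqref{flipequation1}--\eqref{flipequation2}. The only difference is one of completeness rather than of method: the paper's proof consists precisely of the case-by-case matching and sign bookkeeping you defer, including the internal cancellation (via naturality and the coherence condition \eqref{compatibility}) of the simplicial faces $d_{\mathrm{simp}}^0,\dots,d_{\mathrm{simp}}^p$ occurring in both $d_1 d_{n-1}$ and $d_{n-1} d_1$, which must be performed before the surviving cross terms can be paired with the master sum.
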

\begin{proof}
For $N\ge 2$, for each cochain $\phi\in\cgs{p,q+N-2}$, we show the component of $d(d (\phi))$ which lies in $\cgs{p+N,q}$ is zero. Given a simplex $\sigma=(u_1,\dots,u_{p+N})\in\nnn_{p+N}(\uuu)$ and objects $A_0,A_1,\dots,A_q\in\aaa(U_{p+N})$. Let $a=(a_1,\dots,a_q)$ where $a_i\in\aaa(U_{p+N})(A_{q-i},A_{q-i+1})$ as in (\ref{equationa0}). We show that
$$(d(d\phi))^\sigma(a)=\sum_{i=0}^N(d_{N-i}(d_i\phi))^\sigma(a)=0.$$
This equation is equivalent to 
\begin{equation}{\label{d21}}
(d_\h d_N\phi+d_{N-1}d_1\phi+d_1d_{N-1}\phi+\sum_{i=2}^{N-2}d_{N-i}d_i\phi)^\sigma(a)=-(d_Nd_\h\phi)^\sigma(a).
\end{equation}
By definition we have
\begin{eqnarray*}
-(d_Nd_\h\phi)^\sigma(a)&=&-\sum_{\substack{r\in\ppp(R_p(\sigma))\\ \beta\in S_{q,N-1}}}(-1)^q(-1)^r(-1)^\beta c^{\sigma,p,A_q}(d_\h\phi)^{L_p(\sigma)}(\beta(a,r))\\
&=&\sum_{i=0}^{q+N-1}\sum_{\substack{r\in\ppp(R_p(\sigma))\\ \beta\in S_{q,N-1}}}T(q,r,\beta,i)
\end{eqnarray*}
where  $$T(a,r,\beta,i)=-(-1)^{q+i}(-1)^r(-1)^\beta c^{\sigma,p,A_q}(d_\h^i\phi)^{L_p(\sigma)}(\beta(a,r)).$$

 We prove the equation $(\ref{d21})$ in the following steps: 
\begin{enumerate}
	\item For each term $T_1$ occurring in the expression of $d_\h d_N\phi$, there is a unique term $T(a,r,\beta,i)$ in $-(d_Nd_\h\phi)$ such that $T_1=T(a,r,\beta,i)$.
	\item For $j=2,\dots, (N-2)$, for each term $T_2$ occurring in $d_{N-j}d_j\phi$, there is a unique term $T(a,r,\beta',j')$ in $-(d_Nd_\h\phi)$ such that $T_2=T(a,r,\beta',j')$.
	\item After cancellation, for each term $T_3$ in $d_{N-1}d_1+d_1d_{N-1}$, there is a unique term $T(a,r,\beta,i)$ in $-(d_Nd_\h\phi)$ such that $T_3=T(a,r,\beta,i)$.
	\item After the cancellation with the terms $T_1,T_2,T_3$ as in step 1,2,3, denote $X$ the remaining terms in $-(d_Nd_\h\phi)$, then we show that $X=0$.
\end{enumerate}

{\em Step 1.}
 We have \begin{eqnarray*}
d_\h(d_N\phi)^\sigma(a)&=&\sum_{j=0}^q(-1)^j( d_N\phi)^\sigma(d_\h^j(a))\\
&=&\sum_{i=0}^q\sum_{\substack{r\in\ppp(R_p(\sigma)\\ \beta\in S_{q-1,N-1}}}T_1(d_\h^j(a),\beta,r,j)
\end{eqnarray*}
where 
	$$T_1(d_\h^j(a),r,\beta,j)=(-1)^j(-1)^{q-1}(-1)^r(-1)^\beta c^{\sigma,p,A_q}\phi^{L_p\sigma}(\beta(d_\h^j(a),r)).$$

{\em$\bullet$} Consider $j=1,\dots, q-1$. For each path $r\in\ppp(R_p(\sigma)$, each $\beta\in S_{q-1,N-1}$,  we write the formal sequence $\beta^{(0)}(d_\h^j(a),r)=(\beta_1,\dots,\beta_k,a_ja_{j+1},\beta_{k+2}\dots \beta_{q+N-2})$ for some $k$. There is a unique shuffle permutation $\beta'\in S_{q,N-1}$ such that $$\beta'^{(0)}(a,r)=(\underline{\beta}_1,\dots,\underline{\beta}_k,a_j,a_{j+1},\underline{\beta}_{k+2},\dots ,\underline{\beta}_{q+N-2}).$$
By decomposing $\beta'$ as
	\begin{align*}
		&(a_1,\dots,a_q,r_1,\dots,r_{N-1})\\
		& \overset{}{\lra}(a_1,\dots,a_{j-1},r_1,\dots,r_{k-j +1},a_j,a_{j+1},\dots,a_{q},r_{k-j+2},\dots,r_{N-1})\\
		& \lra (\underline{\beta}_1,\dots,\underline{\beta}_k,a_j,a_{j+1},\underline{\beta}_{k+2},\dots ,\underline{\beta}_{q+N-2}).
	\end{align*}
We see that 
	$$\mathrm{sign}(\beta')=(-1)^{k-j+1}\mathrm{sign}(\beta),$$ 
so we get $$T_1(d_\h^j(a),r,\beta,j)=T(a,r,\beta',k+1).$$

 {\em $\bullet$} Consider $j=0$ or $j=q$. For $j=0$, we have
	$$T_1(d_\h^0(a),\beta,r,0)=(-1)^{q-1}(-1)^r(-1)^\beta \sigma^*(a_1)c^{\sigma,p,A_{q-1}}\phi^{L_p}(\beta(a_2,\dots,a_q;r)).$$
Upon writing the formal sequence $\beta^{(0)}(a_2,\dots,a_q;r)=(\underline{\beta}_1,\dots,\underline{\beta}_{N+q-2})$, there is  a unique $\beta'\in S_{q,N-1}$ such that $\beta'^{(0)}(a,r)=(a_1,\underline{\beta}_1,\dots,\underline{\beta}_{q+N-2})$, and thus 
\begin{eqnarray*}
 T(a,r,\beta',0)&=&T_1(d_\h^0(a),\beta,r,0). \end{eqnarray*}
For $j=q$, we have
$$T_1(d_\h^q(a),\beta,r,0)=-(-1)^r(-1)^\beta c^{\sigma,p,A_{q}}\phi^{L_p}(\beta(a_2,\dots,a_q;r))\sigma^\star(a_q).$$
Assume that $\beta^{(0)}(a_1,\dots,a_{q-1};r)=(\underline{\beta}_1,\dots,\underline{\beta}_{N+q-2})$, there is a unique $\beta'\in S_{q,N-1}$ such that $\beta'^{(0)}(a,r)=(\underline{\beta}_1,\dots,\underline{\beta}_{q+N-2},a_q)$. We have 
\begin{eqnarray*}
	T(a,r,\beta',q+N)&=& T_1(d_\h^q(a),\beta,r,q).
\end{eqnarray*}

{\em Step 2.}
We write $$\sigma=(u_1,\dots,u_p,\dots,u_{p+N-j},\dots,u_{p+N}).$$ 
Let $\Delta=(u_1,\dots,u_p,\dots,u_{p+N-j})=L_{p+N-j}(\sigma)$. By definition, we have
	\begin{align*}
	(d_j(d_{N-j}\phi))^\sigma(a) &=\sum_{\substack{r\in\ppp(R_{p+N-j}(\sigma))\\\beta\in S_{q,j-1}}}(-1)^q(-1)^r(-1)^\beta c^{\sigma,p+N-j,A_q}(d_{N-j}\phi)^{L_{p+N-j}(\sigma)}(\beta(a,r))\\
 & =\sum_{\substack{r\in\ppp(R_{p+N-j}(\sigma))\\\beta\in S_{q,j-1}}}\sum_{\substack{s\in\ppp(R(\Delta,p))\\\gamma\in S_{q+j-1,N-j-1}}}T_2(a,r,\beta,s,\gamma)
	\end{align*}

where
	 $$T_2(a,r,\beta,s,\gamma)=(-1)^{j-1}(-1)^{r+s+\beta+\gamma} c^{\sigma,p+N-j,A_q}c^{\Delta,p,(R_{p+N-j}(\sigma))^*A_q}\phi^{L_p(\Delta)}(\gamma(\beta(a,r),s)).$$
The shuffle product is associative, hence 
	$$\gamma(\beta(a,r),s)=\beta(a,\gamma(r,s)).$$
Let $c_0=c^{\sigma,p+1}$, by Lemma \ref{jointpaths}, we have $\omega=(c_0,\gamma(r,s))$ is a path in $\ppp(R_p(\sigma))$. There is a unique $\beta'\in S_{q,N-1}$ such that
	 $$\beta'(a,\omega)=(c_0(A_q),\beta(a,\gamma(r,s))).$$
	  By computation
$(-1)^{j-1}(-1)^{r+s+\beta+\gamma}=-(-1)^q(-1)^{\omega+\beta'}$, by coherence (\ref{compatibility})
	  $$c^{\sigma,p+N-j,A_q}c^{\Delta,p,(R_{p+N-j}(\sigma))^*A_q}=c^{\sigma,p,A_q}c_0(A_q),$$
so we get
	\begin{align*}
		T(a,\omega,\beta',0)&=-(-1)^{q}(-1)^{\omega+\beta'} c^{\sigma,p,A_q}(d_\h^0\phi)^{L_p(\sigma)}\big(c_0(A_q),\beta(a,\gamma(r,s))\big)\\
		&=T_2(a,r,\beta,s,\gamma).
	\end{align*}
 
{\em Step 3.}
By definition we have 
\begin{align*}
	&(d_{N-1}(d_1\phi))^\sigma(a)\\
	&=\sum_{{\substack{r\in\ppp(R_{p+1}(\sigma))\\\beta\in S_{q,N-2}}}}(-1)^{q}(-1)^{r+\beta} c^{\sigma,p+1,A_q}((-1)^{p+q+N-1}d_\s\phi)^{L_{p+1}(\sigma)}(\beta(a,r))
	\\
	&=\sum_{{\substack{r\in\ppp(R_{p+1}(\sigma))\\\beta\in S_{q,N-2}}}}(-1)^{p+N-1}(-1)^{r+\beta}c^{\sigma,p+1,A_q}\bigg(d_\s^0\phi\\
	&+\sum_{i=1}^p(-1)^id_\s^i\phi+(-1)^{p+1}d_\s^{p+1}\phi\bigg)^{L_{p+1}(\sigma)}(\beta(a,r))
	\\
	&=\sum_{{\substack{r\in\ppp(R_{p+1}(\sigma))\\\beta\in S_{q,N-2}}}}\big( B(a,r,\beta)+\sum_{i=1}^{p} C(a,r,\beta,i)+D(a,r,\beta) \big)
\end{align*}
where 
\begin{eqnarray*}
    B(a,r,\beta)&=&(-1)^{p+N-1}(-1)^{r+\beta} c^{\sigma,p+1,A_q}c^{L_{p+1}(\sigma),1,(R_{p+1}(\sigma))^*A_q} M^{u_1}\big(\\ &&\phi^{\partial_0(L_{p+1}(\sigma))}(\beta(a,r))\big);\\
        C(a,r,\beta,i)&=&(-1)^{p+N+i-1}(-1)^{r+\beta} c^{\sigma,p+1,A_q}\phi^{\partial_iL_{p+1}(\sigma)}(\beta(a,r))\epsilon^{L_{p+1}(\sigma),i,(R_{p+1}(\sigma))^\star A_0};\\
    D(a,r,\beta)&=&(-1)^{N}(-1)^{r+\beta} c^{\sigma,p+1,A_q}c^{L_{p+1}(\sigma),p,(R_{p+1}(\sigma))^*A_q}\\
    &&\phi^{\partial_{p+1}L_{p+1}(\sigma)}(u_{p+1}^*(\beta(a,r))).
\end{eqnarray*}
On the other hand, we have
\begin{align*}
	&(d_{1}(d_{N-1}\phi))^\sigma(a)=(-1)^{p+q+N}(d_\s(d_{N-1}\phi))^\sigma(a)
	\\
	&=(-1)^{p+q+N}\bigg((d_\s^0(d_{N-1}\phi))^\sigma(a)+\sum_{i=1}^{p}(-1)^{i}(d_\s^i(d_{N-1}\phi))^\sigma(a)\\
	&+\sum_{i=p+1}^{p+N-1}(-1)^{i}(d_\s^i(d_{N-1}\phi))^\sigma(a)+(-1)^{p+N}(d_\s^{p+N}(d_{N-1}\phi))^\sigma(a)\bigg)
	\\
	&=\sum_{\substack{r\in\ppp(R_{p+1}(\partial_0\sigma))\\\beta\in S_{q,N-2}}}B'(a,r,\beta)+\sum_{i=1}^{p}\sum_{\substack{r\in\ppp(R_{p+1}(\partial_i\sigma))\\\beta\in S_{q,N-2}}}C'(a,r\
	,\beta,i)\\
	&+\sum_{i=p+1}^{p+N-1}\sum_{\substack{r\in\ppp(R_{p}(\partial_i\sigma))\\\beta\in S_{q,N-2}}}C''(a,r\
	,\beta,i)+\sum_{\substack{r\in\ppp(R_{p}\partial_{p+N}\sigma)\\\beta\in S_{q,N-2}}}D'(a,r,\beta)
\end{align*}
where 
\begin{eqnarray*}
	B'(a,r,\beta)&=&(-1)^{p+N}(-1)^{r+\beta} c^{\sigma,1,A_q}u_1^*(c^{\partial_0\sigma,p+1,A_q})M^{u_1}\left( \phi^{L_{p+1}(\partial_0\sigma)}(\beta(a,r))\right) ;
	\\
	C'(a,r,\beta,i)&=&(-1)^{p+N+i}(-1)^{r+\beta} c^{\partial_i\sigma,p+1,A_q}\phi^{L_{p+1}(\partial_i\sigma)}(\beta(a,r)) \epsilon^{\sigma,i,A_0} ;
	\\
	C''(a,r,\beta,i)&=&(-1)^{p+N+i}(-1)^{r+\beta} c^{\partial_i\sigma,p,A_q}\phi^{L_p(\partial_i\sigma)}(\beta(a,r)) \epsilon^{\sigma,i,A_0} ;
	\\
	D'(a,r,\beta)&=&(-1)^{r+\beta}c^{\sigma,p+N-1,A_q}c^{\partial_{p+N}\sigma,p,A_q}\phi^{L_p(\partial_{p+N}\sigma)}(\beta(u_{p+N}^*(a),r)).
\end{eqnarray*}

Since $R_{p+1}(\partial_0\sigma)=R_{p+1}(\sigma)$, by coherence  (\ref{compatibility}), we get 
	$$c^{\sigma,p+1,A_q}c^{L_{p+1}(\sigma),1,(R_{p+1}(\sigma))^*A_q} =c^{\sigma,1,A_q}u_1^*(c^{\partial_0\sigma,p+1,A_q}).$$
This implies $B(a,r,\beta)=-B'(a,r,\beta)$, and thus $$\sum_{\substack{r\in\ppp(R_{p+1}(\partial_0\sigma))\\\beta\in S_{q,N-2}}}B'(a,r,\beta)+\sum_{{\substack{r\in\ppp(R_{p+1}(\sigma))\\\beta\in S_{q,N-2}}}} B(a,r,\beta)=0.$$

For $i=1,\dots, p$, we have $\epsilon^{L_{p+1}(\sigma),i,(R_{p+1}(\sigma))^\star A_0}=\epsilon^{\sigma,i,A_0}$, $\partial_iL_{p+1}(\sigma)=L_{p+1}(\partial_i\sigma)$, and $c^{\sigma,p+1,A_q}=c^{\partial_i\sigma,p+1,A_q}$. Hence $$\sum_{i=1}^p\sum_{{\substack{r\in\ppp(R_{p+1}(\sigma))\\\beta\in S_{q,N-2}}}}C(a,r,\beta,i)+\sum_{i=1}^{p}\sum_{\substack{r\in\ppp(R_{p+1}(\partial_i\sigma))\\\beta\in S_{q,N-2}}}C'(a,r\
,\beta,i)=0 .$$

Now we obtain
\begin{align*}
&(d_{N-1}(d_1\phi))^\sigma(a)+(d_{1}(d_{N-1}\phi))^\sigma(a)\\
&=\sum_{{\substack{r\in\ppp(R_{p+1}(\sigma))\\\beta\in S_{q,N-2}}}} D(a,r,\beta)
+\sum_{i=p+1}^{p+N-1}\sum_{\substack{r\in\ppp(R_{p}(\partial_i\sigma))\\\beta\in S_{q,N-2}}}C''(a,r\,\beta,i)
+\sum_{\substack{r\in\ppp(R_{p}\partial_{p+N}\sigma)\\\beta\in S_{q,N-2}}}D'(a,r,\beta) .
\end{align*}
We complete step 3 by showing that every term at the right hand side of this equation is matched with a unique term in $-(d_Nd_\h\phi)^\sigma(a)$. Consider the term $D(a,r,\beta)$ for $r=(r_1,\dots,r_{N-2})\in\ppp(R_{p+1}(\sigma)) $ and $\beta\in S_{q,N-2}$. Let $c_0=c^{R_p(\sigma),1}$, denote $u_{p+1}^*r=(u_{p+1}^*r_1,\dots,u_{p+1}^*r_{N-2})$ then $s=(c_0,u_{p+1}^*r)$ is a path in $\ppp(R_p\sigma)$ and there is a unique $\beta'\in S_{q,N-1}$ such that 
	$$\beta'(a,s)=(c_0(A_q),u_{p+1}^*\beta(a,r)).$$
By computation we get $-(-1)^q(-1)^{s+\beta'}=(-1)^N(-1)^{r+\beta}$. By coherence (\ref{compatibility}), we have
	$$c^{\sigma,p,A_q}(L_p\sigma)^*c_0(A_q)=c^{\sigma,p+1,A_q}c^{L_{p+1}\sigma,p,(R_{p+1}\sigma)A_q}.$$
Hence, we obtain
	\begin{align*}
	T(a,s,\beta',0)&=-(-1)^q(-1)^{s+\beta'}c^{\sigma,p,A_q}(d_\h^0\phi)^{L_p\sigma}(c_0(A_q),u_{p+1}^*\beta(a,r))\\
	&=-(-1)^q(-1)^{s+\beta'}c^{\sigma,p,A_q}(L_p\sigma)^*c_0(A_q)\phi^{L_p\sigma}(u_{p+1}^*\beta(a,r))\\
&=D(a,r,\beta).
	\end{align*}

Consider the term $C''(a,r,\beta,i)$ for $r\in\ppp(R_p\partial_i\sigma),\beta\in S_{q,N-2}$ and $p+1\le i\le p+N-1$. Then $s=(r,\epsilon^{\sigma,i})$ is a path in $\ppp(R_p\sigma)$, there is a unique $\beta'\in S_{q,N-1}$ such that $$\beta'(a,s)=(\beta(a,r),\epsilon^{\sigma,i,A_0}).$$
By computation $(-1)^{p+N+i}(-1)^{r+\beta}=(-1)^N(-1)^{s+\beta'}$, thus we find
	\begin{align*}
	T(a,s,\beta',q+N-1)&=(-1)^N(-1)^{s+\beta'}c^{\sigma,p,A_q}(d_\h^{q+N-1}\phi)^{L_p\sigma}(\beta(a,r),\epsilon^{\sigma,i,A_0})\\
	&=C''(a,r,\beta,i).
	\end{align*}

Consider the term $D'(a,r,\beta)$ where $r=(r_1,\dots,r_{N-2})\in\ppp(R_p\partial_{p+N}\sigma)$ and $\beta\in S_{q,N-2}$. Let $c_0=c^{R_p\sigma,p+N-1}$, denote $ru_{p+N}^*=(r_1u_{p+N}^*,\dots,r_{N-2}u_{p+N}^*)$, then $s=(c_0,ru^*_{p+N})$ is a path in $\ppp(R_p\sigma)$. There is a unique $\beta'\in S_{q,N-1}$ such that 
	$$\beta'(a,s)=(c_0(A_q),\beta(u_{p+N}^*(a),r)).$$
Since $(-1)^q(-1)^{s+\beta'}=(-1)^{r+\beta}$, we obtain
	$$T(a,s,\beta',0)=D'(a,r,\beta).$$

{\em Step 4.} For $\beta\in S_{q,N-1},\ r\in\ppp(R_p\sigma)$, we write $\beta^{(0)}(a,r)=(\underline{\beta}_1,\dots,\underline{\beta}_{q+N-1})$. For each $k=1,\dots,(q+N-2)$, denote by $S_{q,N-1}^k$ the set of all $(q,N-1)$-shuffle permutations $\beta$ such that $$(\underline{\beta}_k,\underline{\beta}_{k+1})\ne (a_i,a_{i+1}),\ \forall \,\, {  i=1, \dots,q-1}.$$
After steps $1,2,3,$ now it is seen that 
$$-(d_Nd_\h\phi)^\sigma(a)=(d_\h d_N\phi+d_{N-1}d_1\phi+d_1d_{N-1}\phi+\sum_{i=2}^{N-2}d_{N-i}d_i\phi)^\sigma(a)+X$$
where $$X=\sum_{k=1}^{q+N-2}\sum_{\substack{r\in\ppp(R_p(\sigma))\\ \beta\in S^k_{q,N-1}}}T(q,r,\beta,k).$$
Recall that $$T(a,r,\beta,k)=(-1)^{q+1+k}(-1)^{r+\beta} c^{\sigma,p,A_q}(d_\h^k\phi)^{L_p(\sigma)}(\beta(a,r)).$$
Let $\beta\in S^k_{q,N-1},\ r=(r_1,\dots,r_{N-1})\in\ppp(R_p\sigma)$. In the expression $$\beta^{(0)}(a,r)=(\underline{\beta}_1,\dots,\underline{\beta}_k,\underline{\beta}_{k+1},\dots,\underline{\beta}_{q+N-1})$$
if $(\underline{\beta}_k,\underline{\beta}_{k+1})=(a_i,r_j)$ or $(\underline{\beta}_k,\underline{\beta}_{k+1})=(r_j,a_i)$ for some $(i,j)$, then take $\beta'=(k,k+1)\circ \beta$, then $$T(a,r,\beta,k)+T(a,r,\beta',k)=0.$$
Otherwise, $(\underline{\beta}_k,\underline{\beta}_{k+1})=(r_i,r_{i+1})$ for some $i$. Then, by (\ref{flipequation1}) and (\ref{flipequation2}), we get 
$$T(a,r,\beta,k)+T(a,\mathrm{flip}(r,k),\beta,k)=0.$$
Hence $X=0$, this completes our proof.
\end{proof}

\subsection{Normalized reduced cochains}\label{parnormred}
In this section, in analogy with \cite[\S 2.4]{dinhvanliulowen}, we study the subcomplex $\cgsnr{\bullet} \subseteq \cgs{\bullet}$ of normalized reduced cochains.
Let $\sigma=(u_1,\dots,u_p)$ be a $p$-simplex as in $(\ref{eqsigma0})$. The simplex $\sigma$ is  said to be {\em right $k$-degenerate} if $u_i=1_{U_i}$ for some $p-k+1\le i\le p$, $\sigma$ is said to be \emph{degenerate} if it is right $k$-degenerate for $k =  p$. For $A=(A_1,\dots, A_q)\in \aaa(U_p)$ and $a=(a_1,\dots,a_q)$ as in $(\ref{equationa0})$, $a$ is said to be {\em normal} if $a_i=1$ for some $i$.

Given a cochain $\phi=(\phi^\sigma)_\sigma\in\cgs{n}$, $\phi^\sigma$ is said to be {\em normalized} if $\phi^\sigma(a)=0$ as soon as $a$ is normal, and $\phi$ is said to be {\em normalized} if $\phi^\sigma$ is normalized for every simplex $\sigma$. The normalized cochains form a subcomplex $\cgsn{\bullet}$ of $\cgs{\bullet}$. The cochain $\phi$ is said to be \emph{right $k$-reduced} if $\phi^\sigma=0$ for every right $k$-degenerate simplex $\sigma$ and $\phi$ is said to be \emph{reduced} if $\phi^\sigma=0$ for every degenerate simplex $\sigma$.  The normalized reduced cochains further form a subcomplex $\cgsnr{\bullet}$ of $\cgsn{\bullet}$. 

Inspired by \cite[\S 2.4]{dinhvanliulowen},\cite[\S7]{gerstenhaberschack},  we first prove that the inclusion $\cgsn{\bullet}\hookrightarrow\cgs{\bullet}$ is a quasi-isomorphism. It is more subtle to prove $\cgsnr{\bullet}\hookrightarrow \cgsn{\bullet}$ is also a quasi-isomorphism. Due to the higher components of our new differential, the spectral sequence argument does not apply as in \cite[\S 2.4]{dinhvanliulowen}. As a single filtration is not sufficient, we use a double filtration instead. 
\begin{remark}
If $\aaa$ is a presheaf of $k$-linear categories, then the new differential $d$ on $\cgs{\bullet}$ does not reduce to $d_{\mathrm{GS}}$ from \S \ref{parGScompl}. However, on the quasi-isomorphic subcomplex $\cgsnr{\bullet} \subseteq \cgs{\bullet}$ of normalized reduced cochains, $d$ and $d_{\mathrm{GS}}$ do coincide in this case.
\end{remark}

The following Lemma is obvious. 
\begin{lemma}\label{normalizedreduced1}
	Given a cochain complex $(D^\bullet,\delta)$. Let $(D'^\bullet,\delta)$ be a subcomplex of $(D^\bullet,\delta)$. Assume that for every cochain $f\in D^n$, if $\delta(f)\in D'^{n+1}$ then there exists $h\in D^{n-1}$ such that $f-\delta(h)\in D'^n$, for all $n$. Then the inclusion $(D'^\bullet,\delta)\hookrightarrow (D^\bullet,\delta)$ is a quasi-isomorphism.
\end{lemma}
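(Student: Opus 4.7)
The plan is to verify the two standard requirements for a chain map to be a quasi-isomorphism: that the induced map $H^n(D'^\bullet) \lra H^n(D^\bullet)$ is surjective and injective. Both will follow essentially immediately from the single hypothesis of the lemma, applied in the two different cohomological degrees $n$ and $n-1$.

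For surjectivity, I would start with an arbitrary cohomology class in $H^n(D^\bullet)$ represented by a cocycle $f \in D^n$, so that $\delta(f) = 0$. Since $0 \in D'^{n+1}$, the hypothesis gives $h \in D^{n-1}$ with $f' := f - \delta(h) \in D'^n$. Automatically $\delta(f') = \delta(f) = 0$, so $f'$ is a cocycle in the subcomplex, and it is cohomologous to $f$ in $D^\bullet$ by construction. Hence its class in $H^n(D'^\bullet)$ maps to the given class in $H^n(D^\bullet)$.

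For injectivity, I would take a cocycle $f \in D'^n$ that becomes a coboundary in $D^n$, so $f = \delta(g)$ for some $g \in D^{n-1}$. Then $\delta(g) = f \in D'^n$, and applying the hypothesis one degree lower yields $h \in D^{n-2}$ with $g' := g - \delta(h) \in D'^{n-1}$. Now $\delta(g') = \delta(g) - \delta^2(h) = f$, so $f$ is already a coboundary in the subcomplex $D'^\bullet$. This shows that the map on cohomology has trivial kernel, completing the argument. I do not expect any genuine obstacle here: the whole point of the lemma is to package a trivial diagram chase so that later applications (namely, the two quasi-isomorphisms $\cgsnr{\bullet} \hookrightarrow \cgsn{\bullet} \hookrightarrow \cgs{\bullet}$) can be reduced to verifying the single lifting property stated in the hypothesis.
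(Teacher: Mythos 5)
Your proof is correct, and it is exactly the standard diagram chase the authors intend: the paper itself gives no argument, simply declaring the lemma obvious. Your two applications of the hypothesis (in degree $n$ for surjectivity, using $\delta(f)=0\in D'^{n+1}$, and in degree $n-1$ for injectivity, using $\delta(g)=f\in D'^{n}$) are precisely the intended packaging, so there is nothing to add or compare.
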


It is seen that for each simplex $\sigma$, $\cgs{\sigma,\bullet}$ is a cochain complex with the differential $d_\h$. By similar computations as in \cite[\S 7]{gerstenhaberschack} we obtain
\begin{lemma}\label{normalized0}
 Let $f\in\cgs{\sigma,n}$ be a cochain. Assume that $d_\h(f)$ is normalized, then there exists $h\in \cgs{\sigma,n-1}$ such that $f-d_\h(h)$ is  normalized.
\end{lemma}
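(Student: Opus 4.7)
The plan is to adapt the classical Hochschild normalization argument to our setting. For fixed $\sigma$, the complex $(\cgs{\sigma,\bullet}, d_{\mathrm{Hoch}})$ is a Hochschild complex for the $k$-linear category $\aaa(U_p)$ with coefficients in the bimodule $(A,B) \mapsto M^{U_0}(\sigma^\star A, \sigma^\ast B)$, where the left (resp.\ right) action is transported via $\sigma^\ast$ (resp.\ $\sigma^\star$). Since both $\sigma^\ast$ and $\sigma^\star$ are functors and hence preserve identities, the statement reduces to the classical fact that the inclusion of normalized Hochschild cochains into all cochains is a quasi-isomorphism; what has to be produced is an explicit witness $h$ for a given $f$ whose coboundary is already normalized.

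We build $h$ iteratively as $h = h_1 + \ldots + h_N$ (with $N \leq n$), inductively enforcing the vanishing on one further identity position per step. Writing $f_0 := f$ and $f_j := f_{j-1} - d_{\mathrm{Hoch}} h_j$, we define at step $j$
$$h_j(b_1, \ldots, b_{n-1}) := \epsilon_j\, f_{j-1}(b_1, \ldots, b_{k_j-1}, 1, b_{k_j}, \ldots, b_{n-1})$$
for an appropriate insertion position $k_j$ and sign $\epsilon_j = \pm 1$. The inductive hypothesis is that $f_{j-1}$ already vanishes on tuples with an identity at one of the positions handled in previous steps, and that $d_{\mathrm{Hoch}} f_{j-1} = d_{\mathrm{Hoch}} f$ is normalized.

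Two verifications are required. First, the vanishing established at previous steps is preserved by $d_{\mathrm{Hoch}} h_j$: the two terms in the expansion of $d_{\mathrm{Hoch}} h_j(\tilde{a})$ in which the Hochschild differential merges the already-identified $1$ with one of its neighbours cancel via $(-1)^{i-1} + (-1)^{i} = 0$, while the remaining terms are evaluations of $h_j$ on tuples that inherit an identity in a ``forbidden'' position, on which $h_j$ therefore vanishes (boundary contributions involving $\sigma^\ast(1) = 1$, $\sigma^\star(1) = 1$ cancel similarly). Second, for a tuple $\tilde{a}$ having an identity at the new position $k_j$, the analogous $(-1)^{k_j-1} + (-1)^{k_j}$ cancellation isolates the nontrivial terms of $d_{\mathrm{Hoch}} h_j(\tilde{a})$; these are then reorganized by invoking the identity $(d_{\mathrm{Hoch}} f_{j-1})(\tilde{a}') = 0$ for a suitable tuple $\tilde{a}'$ containing the relevant identities (this is exactly where the hypothesis that $d_{\mathrm{Hoch}} f$ is normalized enters), producing $\pm f_{j-1}(\tilde{a})$. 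Choosing $\epsilon_j$ to match signs yields $f_j(\tilde{a}) = 0$.

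The main obstacle is purely combinatorial: selecting the insertion positions $k_j$ and signs $\epsilon_j$ so that the scheme terminates and each new modification preserves all vanishings already achieved. This is where care is required, but the argument is parallel to \cite[\S 7]{gerstenhaberschack}: at most $n$ iterations suffice, after which $f_N = f - d_{\mathrm{Hoch}} h$ is normalized, and we may take $h := h_1 + \cdots + h_N$.
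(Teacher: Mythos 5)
Your proposal is correct and coincides with the paper's own route: the paper establishes Lemma \ref{normalized0} precisely by the computations of \cite[\S 7]{gerstenhaberschack} applied, for fixed $\sigma$, to the Hochschild complex of the $k$-linear category $\aaa(U_p)$ with coefficients in the unital bimodule $(A,B)\mapsto M^{U_0}(\sigma^{\star}A,\sigma^{\ast}B)$, which is exactly the classical iterative normalization you sketch. For the record, the scheme closes up with the concrete choices $k_j=j$ and $h_j=(-1)^{j-1}f_{j-1}(\ldots,1,\ldots)$ (identity inserted at position $j$): the two merge-terms cancel, the remaining terms vanish by the previously achieved vanishing or reassemble to $\pm f_{j-1}$ via $d_{\mathrm{Hoch}}f$ evaluated on a tuple with two identities, so $n$ steps suffice as you claim.
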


Equip $\cgs{\bullet}$ with a filtration $$\cdots\subseteq F^p\CC^{n}\subseteq F^{p-1}\CC^{n}\subseteq\cdots\subseteq F^0\CC^{n}\subseteq F^{-1}\CC^n=\cgs{n}$$ by setting
\[F^j\CC^{n}=\{\phi = (\phi^{\sigma})_{\sigma} \in \cgs{n}\mid \phi^\sigma \text{ is normalzied if $|\sigma|\le j$ }\}.\]

Since $d(F^j\CC^{p})\subseteq F^j\CC^{p+1}$, $F^j\CC^{\bullet}$ is  a complex. There is a sequence of complexes
\begin{equation}
\cdots\hookrightarrow F^j\CC^{\bullet}\hookrightarrow F^{j-1}\CC^{\bullet}\hookrightarrow \cdots\hookrightarrow F^0\CC^{\bullet}.
\end{equation}

\begin{proposition}\label{inclusion1}
The following inclusions are quasi-isomorphisms:
\begin{enumerate}
\item $l\colon F^j\CC^{\bullet}\hookrightarrow F^{j-1}\CC^{\bullet}$;
\item $\cgsn{\bullet}\hookrightarrow \cgs{\bullet}$.
\end{enumerate}
\end{proposition}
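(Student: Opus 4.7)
The plan is to apply Lemma \ref{normalizedreduced1} to both inclusions, establishing (1) first and then bootstrapping it into (2) via an iterative construction along the filtration.

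For (1), given $\phi\in F^{j-1}\CC^n$ with $d\phi\in F^j\CC^{n+1}$, I will construct $h\in F^{j-1}\CC^{n-1}$ supported only at simplices of dimension $j$, so that $\phi-dh\in F^j\CC^n$. The key observation is that for any simplex $\sigma$ with $|\sigma|=j$ and any $k\ge 1$, the component $(d_k\phi)^\sigma$ depends only on $\phi^\tau$ for $|\tau|\le j-1$, which is already normalized. Moreover normality propagates through the formulas: restriction functors $u^*$ preserve identities in the case $k=1$, and for $k\ge 2$ the shuffle $\beta(a,r)$ in \eqref{defnewdif} places each $a_i$ into the sequence as the image under a composite of functors, so an identity $a_i=1$ produces an identity entry that annihilates the normalized $\phi^{L_{j-k}(\sigma)}$. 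Since $(d\phi)^\sigma$ is normalized by hypothesis, this forces $(d_0\phi)^\sigma = d_\h(\phi^\sigma)$ to be normalized; Lemma \ref{normalized0} then yields the desired $h^\sigma$ simplex by simplex. Setting $h^\tau=0$ for $|\tau|\neq j$, one checks using the degree-shift $d_k\colon\cgs{p,q}\to\cgs{p+k,q+1-k}$ that $(dh)^\tau=0$ for $|\tau|<j$ and $(dh)^\tau=d_\h(h^\tau)$ for $|\tau|=j$, whence $\phi-dh\in F^j\CC^n$.

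For (2), I apply Lemma \ref{normalizedreduced1} directly to $\cgsn{\bullet}\subseteq\cgs{\bullet}$. Given $\phi\in\cgs{n}$ with $d\phi\in\cgsn{n+1}$, I iterate the construction from (1) to produce, for each $j\ge 0$, a cochain $h_j$ supported at dimension $j$ such that $\phi - d(h_0+\cdots+h_j)\in F^j\CC^n$; the hypothesis $d\phi\in\cgsn{n+1}\subseteq F^j\CC^{n+1}$ is preserved throughout since $d^2=0$. The crucial formal fact is that, by the degree shift $d_k\colon\cgs{p,q}\to\cgs{p+k,q+1-k}$, each $dh_j$ vanishes at simplices of dimension below $j$. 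Hence the formal sum $h:=\sum_{j\ge 0}h_j$ is well-defined simplex by simplex (via $h^\tau=h_{|\tau|}^\tau$) and at any $\sigma$ of dimension $p$ we have $(\phi-dh)^\sigma=(\phi-d(h_0+\cdots+h_p))^\sigma$, which is normalized.

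The main obstacle is the normality check for the higher components $d_k$ with $k\ge 2$: one must unpack the evaluation shuffle of \S\ref{parshuffle} to confirm that an identity argument $a_i=1$ really surfaces as an identity entry in $\beta(a,r)$ so that it is killed by the normalized $\phi^{L_{j-k}(\sigma)}$. Once this combinatorial point is in place, the rest of the argument is essentially bookkeeping controlled by the degree shift, which both ensures that $h_j$ supported at dimension $j$ does not disturb lower dimensions (giving the inductive step of (1)) and secures simplex-wise convergence in (2) with no completeness hypothesis on the filtration.
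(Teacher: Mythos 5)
Your proposal is correct and follows essentially the same route as the paper: reduce to Lemma \ref{normalizedreduced1}, observe that at a $j$-simplex the components $d_k$ with $k\ge 1$ vanish on normal tuples because they only involve the already-normalized components of $\phi$ at lower-dimensional simplices (normality being preserved by functors and by the evaluation shuffles), and then fix the remaining $d_\h$-part simplex-wise via Lemma \ref{normalized0} with a homotopy supported in simplex dimension $j$. The only cosmetic difference is in item (2), where you iterate the correction along the filtration (a finite process in each degree), whereas the paper deduces (2) from (1) using that the filtration is stationary in each fixed degree.
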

\begin{proof}
	It suffices to prove that (1) is a quasi-isomorphism. By Lemma \ref{normalizedreduced1} it is sufficient to prove that for every cochain $\phi\in F^{j-1}\CC^n$, if $d(\phi)\in F^j\CC^{n+1}$ then there exists a cochain $\psi\in F^{j-1}\CC^{n-1}$ such that $\phi-d(\psi)\in F^j\CC^n$. Writing $\phi=(\phi_{p,q})_{p+q=n}$, we assume that $d(\phi)\in F^j\CC^{n+1}$. Let $\sigma$ be a $j$-simplex  and let $a=(a_1,\dots,a_{n+1-j})$ be normal, then $(d(\phi))^\sigma(a)=0$. By definition, we have 
	$$(d\phi)^\sigma(a)=\sum_{i=0}^j(d_i\phi_{j-i,n-j+i})^\sigma(a).$$
	Note that $(d_i\phi_{j-i,n-j+i})^\sigma(a)=0$ for $i>0$ as $\phi\in F^{j-1}\CC^n$.  Hence we get $$(d_\h\phi_{j,n-j})^\sigma(a)=0.$$
	By Lemma \ref{normalized0}, there exists $h^\sigma\in \CC^{\sigma,n-j-1} $ such that $\phi_{j,n-j}^\sigma-d_\h(h^\sigma)$ is normalized. We define $\psi^\sigma=h^\sigma$ if $|\sigma|=j$ and $\psi^\sigma=0$ otherwise. Thus $\psi\in F^{j-1}\CC^{n-1}$ and it is easy to see that $\phi-d(\psi)\in F^j\CC^n$.
\end{proof}

Now equip $\cgsn{\bullet}$ with a filtration $$\cdots\subseteq F'^p\bar{\CC}^{n}\subseteq F'^{p-1}\bar{\CC}^{n}\subseteq\cdots\subseteq F'^0\bar{\CC}^{n}=\cgsn{n}$$ by setting, for each $k\ge 1$,
\[
F'^k\bar{\CC}^{n}=\{\phi = (\phi^{\sigma})_{\sigma} \in \cgsn{n}\mid \phi^\sigma(a)=0 \ \forall a, \text{ if $\sigma$ is right $k$-degenerate}\}.
\]

\begin{lemma}\label{filtrationdegenerate1}
$	d(F^k\bar{\CC}^n) \subseteq F^k\bar{\CC}^{n+1}.	$
\end{lemma}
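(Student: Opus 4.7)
The reading I adopt is that $F^k\bar{\CC}^n$ denotes $F^k\CC^n \cap \bar{\CC}^n$, where $F^k\CC^n$ is the normalization filtration from Proposition \ref{inclusion1}. Since any fully normalized cochain automatically has $\phi^\sigma$ normalized for $|\sigma| \le k$, this intersection equals $\bar{\CC}^n$, so the lemma asserts that the new differential $d$ still takes fully normalized cochains to fully normalized cochains, i.e., $d(\bar{\CC}^n) \subseteq \bar{\CC}^{n+1}$. Although $d$ differs from $d_{\mathrm{GS}}$ by the higher components $d_j$ with $j \geq 2$, normalization stability must be checked afresh because these new components have not appeared in the presheaf case.

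Let $\phi = (\phi_{p,q})_{p+q=n} \in \bar{\CC}^n$. I need to check that for every $p'$-simplex $\sigma$ and every input sequence $(b_1, \dots, b_t)$ with some $b_i = 1$, one has $(d\phi)^\sigma(b_1, \dots, b_t) = 0$. Splitting $d\phi = \sum_j d_j \phi_{p'-j,\, n-p'+j}$ according to \eqref{newdifinin}, I treat each component separately.

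For $d_0 = d_\h$, this is the classical Hochschild normalization computation carried out at fixed $\sigma$. For $d_1 = (-1)^n d_\s$, each of the $p'+1$ face contributions $d_\s^i \phi^\sigma$ evaluates $\phi^{\partial_i \sigma}$ either directly on $(b_1, \dots, b_t)$ (for $i < p'$) or on the restricted tuple $(u_{p'}^* b_1, \dots, u_{p'}^* b_t)$ (for $i = p'$). Since restriction functors preserve identities, the $b_i$-slot of the argument remains an identity in both situations, and $\phi^{\partial_i \sigma}$ vanishes by normalization of $\phi$.

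For the higher components $d_j$ with $j \geq 2$, formula \eqref{defnewdif} feeds the evaluation $\phi$-shuffle $\beta(b, r)$ into $\phi^{L_p(\sigma)}$, with $r \in \ppp(R_j(\sigma))$ a path and $\beta \in S_{t,\, j-1}$ a shuffle. By the inductive definition of the shuffle product \eqref{shuffleelements} together with the evaluation action of Remark \ref{remshuffle}, each input morphism $b_i$ contributes exactly one entry to the shuffled sequence, of the form $T(b_i)$ for some composite functor $T$ built from restrictions along $\sigma$. Because functors preserve identities, $b_i = 1$ forces $T(b_i) = 1$ in the corresponding slot, and $\phi^{L_p(\sigma)}$ evaluated on $\beta(b, r)$ vanishes by normalization, uniformly over all $r$ and $\beta$; no cancellation between terms is required. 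The main point to verify, which I expect to be routine given the inductive construction \eqref{shuffleelements}, is simply that the shuffle never absorbs the $b_i$-slot into a position masked by natural-transformation components of $r$—the explicit contributions in \eqref{shuffleelements} make this immediate.
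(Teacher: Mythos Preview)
You have misidentified the statement. The notation $F^k\bar{\CC}^n$ in the lemma is a typo for $F'^k\bar{\CC}^n$, the filtration on \emph{already normalized} cochains defined immediately before the lemma by right $k$-reducedness: $\phi\in F'^k\bar{\CC}^n$ means $\phi$ is normalized \emph{and} $\phi^\sigma=0$ whenever $\sigma$ is right $k$-degenerate (i.e.\ some $u_i=1$ with $p-k+1\le i\le p$). Your reading collapses the filtration to all of $\bar{\CC}^n$, and what you actually prove is that $d$ preserves normalization. That fact is correct and is used implicitly in the paper (it is why $\bar{\CC}^\bullet$ is a subcomplex at all), but it is not Lemma~\ref{filtrationdegenerate1}.

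The correct task is: for $\phi$ normalized and right $k$-reduced, and $\sigma$ a right $k$-degenerate $p$-simplex, show $(d\phi)^\sigma=0$. The paper's argument is short but uses \emph{both} hypotheses on $\phi$. For $d_0$ one has $(d_\h\phi)^\sigma$ built from $\phi^\sigma=0$. For $d_1$ one uses the standard simplicial degeneracy cancellation: if $u_j=1$ then $\partial_{j-1}\sigma=\partial_j\sigma$ with $\epsilon^{\sigma,j-1}=\epsilon^{\sigma,j}=1$, so those two terms cancel, while all other $\partial_i\sigma$ remain right $k$-degenerate and vanish by reducedness. For $d_j$ with $j\ge2$, the decisive observation---which your argument does not contain---is a dichotomy on where the identity $u_i=1$ sits relative to the split $\sigma=L\sigma\sqcup R\sigma$ used in \eqref{defnewdif}: either the identity lies in the left part, so $L\sigma$ is right $k$-degenerate and $\phi^{L\sigma}=0$ by reducedness; or it lies in the right part, so $R\sigma$ is degenerate, every path $r\in\ppp(R\sigma)$ then has some component $r_l=1$, hence each shuffle $\beta(a,r)$ is normal and $\phi^{L\sigma}(\beta(a,r))=0$ by normalization. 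Your shuffle analysis only tracks identities coming from the Hochschild arguments $a$, not identities arising inside the path $r$, so it does not reach this point.
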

\begin{proof}
	Let $\phi\in F'^k\bar{\CC}^n$, $\sigma=(u_1,\dots,u_p)$ be a right $k$-degenerate $p$-simplex, and let $ a=(a_1,\dots,a_{n+1-p})$.
We need to prove $(d\phi)^\sigma(a)=0$. By definition $(d\phi)^\sigma=\sum_{i=0}^p(d_i\phi)^\sigma$. Obviously, we have $(d_0\phi)^\sigma=0$ and $(d_1\phi)^\sigma=0$. For $i\ge 2$ we have $$(d_i\phi)^\sigma(a)=(-1)^{n+1-p}(-1)^{r+\beta}\sum_{\substack{r\in\ppp(R_i\sigma);\beta\in S_{n+1-p,p-1}}}c^{\sigma,i}\phi^{L_i\sigma}(\beta(a,r)).$$
Because $\sigma$ is right $k$-degenerate, either $L_i\sigma$ is  right $k$-degenerate or $R_i\sigma$ is degenerate. If $R_i\sigma$ is degenerate, then for each path $r$, we have $r_j=1$ for some $j$. So $\beta(a,r)$ is normal, and we get $\phi^{L_i\sigma}(\beta(a,r))=0$.	
\end{proof}

 By Lemma \ref{filtrationdegenerate1} we obtain a sequence of complexes
\begin{equation}\label{filtration2}
\cdots\hookrightarrow F'^k\bar{\CC}^{\bullet}\hookrightarrow F'^{k-1}\bar{\CC}^{\bullet}\hookrightarrow \cdots\hookrightarrow F'^0\bar{\CC}^{\bullet}.
\end{equation}
Next, for each $k\ge 0$, we equip  $F'^k\bar{\CC}$ with a further  filtration 
$$F'^{k+1}\bar{\CC}^n=G^{n+1} F'^k\subseteq\bar{\CC}^{n}\cdots\subseteq G^{l+1} F'^k\bar{\CC}^{n}\subseteq G^{l} F'^{k}\bar{\CC}^{n}\subseteq\cdots\subseteq G^0F'^k\bar{\CC}^{n}=F'^k\bar{\CC}^{n}$$ 
by setting $$ G^{l} F'^{k}\bar{\CC}^{n}=\{\phi\in F'^k\bar{\CC}^n|\ \phi^\sigma=0 \text{ for $|\sigma|\ge n-l+1$ and $\sigma$ is right $(k+1)$-degenerate} \}.$$
By {analogous} computations as in Lemma \ref{filtrationdegenerate1}, we get $$d( G^{l} F'^{k}\bar{\CC}^{n})\subseteq  G^{l} F'^{k}\bar{\CC}^{n+1}.$$ 
Thus, for each $k$, we obtain a sequence of complexes 
\begin{equation}\label{filtration3}
\cdots\hookrightarrow G^{l+1}F'^k\bar{\CC}^{\bullet}\hookrightarrow G^lF'^{k}\bar{\CC}^{\bullet}\hookrightarrow \cdots\hookrightarrow F'^k\bar{\CC}^{\bullet}.
\end{equation}

\begin{lemma}\label{reduced1} 
	Let $\phi$ be right $k$-reduced cochain in $\cgsn{p,q}$. If $d_\s\phi$ is a right $(k+1)$-reduced cochain in  $\cgsn{p+1,q}$, then there exists a right $k$-reduced cochain $\psi\in\cgsn{p-1,q}$ such that $\phi-d_\s\psi$ is a right $(k+1)$-reduced cochain.
\end{lemma}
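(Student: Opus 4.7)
The plan is to construct $\psi$ explicitly as a signed pullback of $\phi$ along the degeneracy that inserts an identity morphism at the ``critical'' position $p-k$, and then to deduce the conclusion by combining the simplicial identities $\partial_j s_j = \partial_{j+1} s_j = \mathrm{id}$ with the hypothesis $d_\s\phi \in F'^{k+1}\bar{\CC}$.

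Concretely, I would set
$$\psi^\tau := (-1)^{p-k}\,\phi^{s_{p-k-1}\tau}$$
for every $(p-1)$-simplex $\tau$, where $s_{p-k-1}\tau$ denotes the $p$-simplex obtained by inserting the identity at position $p-k$. Normalisation passes from $\phi$ to $\psi$ immediately. For right $k$-reducedness of $\psi$: if $\tau$ has an identity at position $i \in [p-k, p-1]$, that identity lands at position $i+1 \in [p-k+1, p]$ of $s_{p-k-1}\tau$, which is therefore right $k$-degenerate, so $\phi^{s_{p-k-1}\tau}=0$.

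The crucial verification is that $\phi - d_\s\psi$ is right $(k+1)$-reduced. First, for $\sigma_0$ already right $k$-degenerate with identity at some position $i \in [p-k+1, p]$, one has $\phi^{\sigma_0}=0$, and the only two faces of $\sigma_0$ that need not be right $k$-degenerate are $\partial_{i-1}\sigma_0$ and $\partial_i\sigma_0$; these coincide by the simplicial identities and their $\epsilon$-factors trivialise because every $c^{u,1}$ or $c^{1,v}$ is the identity in a prestack, so the two contributions to $(d_\s\psi)^{\sigma_0}$ cancel. Second, and essentially, for $\sigma_0$ right $(k+1)$-degenerate but not right $k$-degenerate, the identity necessarily occurs at position $p-k$, hence $\sigma_0 = s_{p-k-1}\tau_0$ for a unique $\tau_0$ not right $k$-degenerate. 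Applying the hypothesis to the right $(k+1)$-degenerate $(p+1)$-simplex $\sigma := s_{p-k}\sigma_0$, the three middle faces $\partial_{p-k-1}\sigma = \partial_{p-k}\sigma = \partial_{p-k+1}\sigma = \sigma_0$ contribute $\phi^{\sigma_0}$ with net sign $(-1)^{p-k-1}$, the faces at higher indices are right $k$-degenerate (killing $\phi$ there), and the lower-index faces satisfy $\partial_i\sigma = s_{p-k-1}\partial_i\sigma_0$ with matching $c$- and $\epsilon$-factors. Solving $(d_\s\phi)^\sigma = 0$ for $\phi^{\sigma_0}$ and comparing with the direct expansion of $(d_\s\psi)^{\sigma_0}$, in which the pair at $i = p-k-1, p-k$ cancels and the tail at $i \geq p-k+1$ vanishes by the right $k$-reducedness of $\psi$, the remaining sums match term by term, with the sign $(-1)^{p-k}$ calibrated precisely to make the two expressions agree.

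The main obstacle is the bookkeeping of signs and of the $c$-/$\epsilon$-twist factors proliferating in the expansion of the simplicial differential. All of them collapse once one observes that the presence of an identity adjacent to any other morphism forces $c^{u,1} = c^{1,v} = 1$ by the prestack axioms, so that the argument ultimately mirrors the classical Dold--Kan reduction to normalised chains, adapted here to the twisted prestack setting.
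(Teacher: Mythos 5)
The paper states Lemma \ref{reduced1} without proof (it is left as a Gerstenhaber--Schack-style normalization argument), so there is no argument of the authors to measure yours against; judged on its own, your construction is correct and is the natural one. The definition $\psi^\tau=(-1)^{p-k}\phi^{s_{p-k-1}\tau}$ does produce a legitimate element of $\cgsn{p-1,q}$, since inserting an identity changes neither $\tau^\ast$ nor $\tau^\star$, and your three verifications go through as claimed: $\psi$ is normalized and right $k$-reduced; for $\sigma_0$ already right $k$-degenerate, every face of $\sigma_0$ is right $k$-degenerate except the coinciding pair $\partial_{j-1}\sigma_0=\partial_j\sigma_0$ at the identity position $j$, whose two contributions to $(d_\s\psi)^{\sigma_0}$ cancel because $c^{u,1}=c^{1,v}=1$ trivializes the $\epsilon$- and $c$-factors; and for $\sigma_0$ with its only critical identity at position $p-k$, expanding the hypothesis $(d_\s\phi)^{s_{p-k}\sigma_0}=0$ yields exactly $\phi^{\sigma_0}=(d_\s\psi)^{\sigma_0}$, using $\partial_i(s_{p-k}\sigma_0)=s_{p-k-1}\partial_i\sigma_0$ for the low faces, the triple coincidence of faces with net sign $(-1)^{p-k-1}$, the vanishing of $\phi$ on the right $k$-degenerate high faces, and the equalities $c^{s_{p-k}\sigma_0,1}=c^{\sigma_0,1}$ and $\epsilon^{s_{p-k}\sigma_0,i}=\epsilon^{\sigma_0,i}$, all of which hold because identities are transparent for restriction and for the twists; the sign $(-1)^{p-k}$ is indeed the right calibration.

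What a complete write-up still owes, none of it fatal: the terms $d_\s^0$ and $d_\s^p$ (resp.\ $d_\s^{p+1}$ on the $(p+1)$-simplex) have formulas different from the middle terms, so the extreme positions need separate treatment --- for $k=0$ the third copy of $\sigma_0$ among the faces of $s_{p-k}\sigma_0$ is the last face and enters through $c^{\sigma,p,A_q}\phi^{\partial_{p+1}\sigma}(w_{p+1}^\ast a)$ with $w_{p+1}=1$, and for $p-k=1$ the cancelling pair (and the matched $i=0$ terms) involves $c^{\sigma_0,1}M^{u_1}$ with $u_1=1$, where you tacitly use $M^{1_U}=\mathrm{id}$, which holds for $M=\aaa$ and for bimodules in the paper's sense via the unit axiom of pseudonatural transformations. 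Since every structure map involving an identity is trivial, these cases collapse just like the generic ones, and your argument stands.
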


\begin{proposition}\label{inclusion2}The following inclusions are quasi-isomorphism:
	\begin{enumerate}
		\item  $G^{l+1} F'^{k}\bar{\CC}^{\bullet}\hookrightarrow G^{l} F'^{k}\bar{\CC}^{\bullet}$; 
		\item  $F'^{k+1}\bar{\CC}^{\bullet}\hookrightarrow F'^{k}\bar{\CC}^{\bullet}$;
		\item  $\cgsnr{\bullet}\hookrightarrow \cgsn{\bullet}$.
	\end{enumerate}

\end{proposition}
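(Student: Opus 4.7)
The plan is to apply Lemma \ref{normalizedreduced1} to each of the three inclusions, with nearly all the work concentrated in (1); parts (2) and (3) will then follow by iterating through finite filtrations degree by degree. For (1), fix $\phi \in G^lF'^k\bar{\CC}^n$ with $d\phi \in G^{l+1}F'^k\bar{\CC}^{n+1}$; I must produce $\psi \in G^lF'^k\bar{\CC}^{n-1}$ such that $\phi - d\psi \in G^{l+1}F'^k\bar{\CC}^n$. The only new vanishing condition in $G^{l+1}F'^k$ relative to $G^lF'^k$ concerns the bidegree $(n-l,l)$ component of $\phi$ at right $(k+1)$-degenerate $(n-l)$-simplices, so this is where the correction must be produced.

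To extract information from the hypothesis $d\phi \in G^{l+1}F'^k$, I read $(d\phi)^\tau = 0$ for $\tau$ right $(k+1)$-degenerate of dimension $n-l+1$. Decomposing
$$(d\phi)^\tau = \sum_{j=0}^{n-l+1}(d_j\phi_{n-l+1-j,l+j-1})^\tau,$$
the $j=0$ contribution uses only $\phi_{n-l+1,l-1}^\tau$, which vanishes since $\phi \in G^lF'^k$ kills right $(k+1)$-degenerate simplices of dimension $\geq n-l+1$. For $j \geq 2$, I split by the position of the identity edge of $\tau$ relative to the split at $p = n-l+1-j$: if it lies in $L_p\tau$, a direct positional check (using $j \geq 2$) shows $L_p\tau$ is right $k$-degenerate, so $\phi^{L_p\tau}$ vanishes by $\phi \in F'^k$; if it lies in $R_p\tau$, then every path $r \in \ppp(R_p\tau)$ contains an identity natural transformation (since $c^{u,v} = 1$ whenever $u$ or $v$ is an identity), so every shuffle $\beta(a,r)$ is normal and is annihilated by the normalized $\phi^{L_p\tau}$. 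The identity thus collapses to $(-1)^{n+1}(d_\s\phi_{n-l,l})^\tau = 0$, showing $d_\s\phi_{n-l,l}$ is right $(k+1)$-reduced.

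I now apply Lemma \ref{reduced1} to the right $k$-reduced cochain $\phi_{n-l,l}$ to obtain a right $k$-reduced $\tilde\psi \in \cgsn{n-l-1,l}$ with $\phi_{n-l,l} - d_\s\tilde\psi$ right $(k+1)$-reduced, and let $\psi \in \bar{\CC}^{n-1}$ be the cochain concentrated in bidegree $(n-l-1,l)$ with value $(-1)^n\tilde\psi$ (the sign compensating that $d_1 = (-1)^n d_\s$ on $\bar{\CC}^{n-1}$). Since $\psi$ is supported in a single bidegree below the $G^l$ threshold, $\psi \in G^lF'^k\bar{\CC}^{n-1}$. In bidegree $(n-l,l)$ of $d\psi$ only $d_1$ contributes, giving $(\phi - d\psi)_{n-l,l} = \phi_{n-l,l} - d_\s\tilde\psi$, which is right $(k+1)$-reduced by construction; for $|\sigma| \geq n-l+1$, the filtration-preservation $d(G^lF'^k) \subseteq G^lF'^k$ combined with $\phi \in G^lF'^k$ yields $(\phi - d\psi)^\sigma = 0$. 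Thus $\phi - d\psi \in G^{l+1}F'^k\bar{\CC}^n$, and Lemma \ref{normalizedreduced1} gives (1).

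Parts (2) and (3) reduce to iteration along finite filtrations. In degree $n$, $G^l F'^k\bar{\CC}^n$ stabilizes at $F'^{k+1}\bar{\CC}^n$ for $l \geq n+1$, so the chain $F'^{k+1}\bar{\CC}^n = G^{n+1}F'^k\bar{\CC}^n \subseteq \cdots \subseteq G^0F'^k\bar{\CC}^n = F'^k\bar{\CC}^n$ is finite; iterating the quasi-isomorphism from (1) and chasing degree by degree (push every cycle successively up the filtration, and correct every spurious coboundary $z = dw$ with $z \in G^{l+1}$ to one with $w \in G^{l+1}$ by injectivity on cohomology) yields both surjectivity and injectivity of $H^\bullet(F'^{k+1}) \to H^\bullet(F'^k)$, proving (2). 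Part (3) follows by exactly the same argument applied to the finite chain $\cgsnr{n} = F'^n\bar{\CC}^n \subseteq \cdots \subseteq F'^0\bar{\CC}^n = \cgsn{n}$ using (2). The main obstacle sits in part (1): the combinatorial verification that right $(k+1)$-degeneracy of $\tau$ propagates appropriately either to right $k$-degeneracy of $L_p\tau$ or to an identity component in every path of $\ppp(R_p\tau)$, which is what kills all the higher components $d_j$ ($j \geq 2$) of the new differential on the relevant simplex.
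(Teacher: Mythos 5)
Your proof is correct and follows essentially the same route as the paper: reduce everything to (1) via the degreewise-stationary filtrations, apply Lemma \ref{normalizedreduced1}, kill the $d_0$ and $d_j$ ($j\ge 2$) contributions at right $(k+1)$-degenerate $(n-l+1)$-simplices by the Lemma \ref{filtrationdegenerate1}-type case analysis, and then invoke Lemma \ref{reduced1} to build the correction $\psi$ concentrated in bidegree $(n-l-1,l)$. You merely make explicit some details the paper leaves implicit (the sign $(-1)^n$, the positional degeneracy check, and the cycle-chase deducing (2) and (3) from stationarity), so no changes are needed.
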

\begin{proof}
	For each $n$, the filtrations (\ref{filtration2}) and (\ref{filtration3}) are stationary, so we only need to prove (1). By Lemma \ref{normalizedreduced1}, it is sufficient to prove that for any cochain  $\phi=(\phi_{p,q})\in G^lF'^k\bar{\CC}^n$ which satisfies $d\phi\in G^{l+1}F'^k\bar{\CC}^{n+1}$, there exists a cochain $\psi\in G^lF'^k\bar{\CC}^{n-1}$ such that $\phi-d\psi\in G^{l+1}F'^k\bar{\CC}^n$. 
	
	Set $p=n-l+1$ and let  $\sigma$ be $(k+1)$-right degenerate $p$-simplex. By definition, we have $\phi_{p,n-p}^\sigma=0$. Assume that $(d\phi)^\sigma=0$, by computation as in Lemma \ref{filtrationdegenerate1}, we deduce 
	$$(d_\simp\phi_{p-1,n-p+1})^\sigma=0.$$
	Apply Lemma \ref{reduced1}, there exists $h\in F'^{k}\bar{\CC}^{p-2,n-p+1}$ such that
	 $$(\phi_{p-1,n-p+1}-d_1(h))^{\sigma'}=0$$
	 for every $(k+1)$-right degenerate $(p-1)$-simplex $\sigma'$.\\
	  We define $\psi^\sigma=h^\sigma$ if $|\sigma|=p-2$ and $\psi^\sigma=0$ elsewhere. It is seen that $\psi\in G^lF'^k\bar{\CC}^{n-1}$ and $\phi-d\psi\in G^{l+1}F'^k\bar{\CC}^n$ as desired.
\end{proof}

Combining Propositions \ref{inclusion1} and \ref{inclusion2}, we now obtain the following isomorphisms.

 \begin{proposition}\label{normalreducefinal} Let $M$ be an  $\aaa$-bimodule. Then
 $$H^n\cgsnr{\bullet}\simeq H^n\cgsn{\bullet}\simeq H^n\cgs{\bullet}.$$
 \end{proposition}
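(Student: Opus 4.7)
The plan is to read off the claim as an immediate consequence of the quasi-isomorphisms already established in Propositions \ref{inclusion1} and \ref{inclusion2}. Specifically, Proposition \ref{inclusion1}(2) asserts that the inclusion of cochain complexes $\cgsn{\bullet}\hookrightarrow\cgs{\bullet}$ is a quasi-isomorphism, and Proposition \ref{inclusion2}(3) asserts that the inclusion $\cgsnr{\bullet}\hookrightarrow\cgsn{\bullet}$ is a quasi-isomorphism. Applying $H^n$ to these two inclusions yields the desired chain
$$H^n\cgsnr{\bullet}\simeq H^n\cgsn{\bullet}\simeq H^n\cgs{\bullet}.$$
Since $\cgsnr{\bullet}\subseteq\cgsn{\bullet}\subseteq\cgs{\bullet}$ holds by construction (as subcomplexes with the inherited differential $d$ from \eqref{newdifin}), the composition is also realized by the inclusion $\cgsnr{\bullet}\hookrightarrow\cgs{\bullet}$, so the isomorphism is induced by the natural inclusion. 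The argument is uniform in the coefficient bimodule $M$: all intermediate filtration and reduction arguments in the proofs of \ref{inclusion1} and \ref{inclusion2} work over an arbitrary $\aaa$-bimodule, so no additional hypothesis on $M$ is required.

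There is no new obstacle to overcome here; all the substantive technical work --- the single filtration $F^{\bullet}\CC^{\bullet}$ on $\cgs{\bullet}$ controlling the passage to normalized cochains (Lemma \ref{normalized0} plus the degree-wise reduction via Lemma \ref{normalizedreduced1}), and the double filtration $G^{\bullet}F'^{\bullet}\bar{\CC}^{\bullet}$ on $\cgsn{\bullet}$ controlling the passage to reduced cochains (Lemma \ref{reduced1} together with the stationarity of both filtrations in each cohomological degree $n$) --- has already been executed. The present statement is a one-line packaging corollary: "Combine Propositions \ref{inclusion1} and \ref{inclusion2}."
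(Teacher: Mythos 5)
Your proposal matches the paper exactly: the paper derives Proposition \ref{normalreducefinal} by simply combining Proposition \ref{inclusion1}(2) (the quasi-isomorphism $\cgsn{\bullet}\hookrightarrow\cgs{\bullet}$) with Proposition \ref{inclusion2}(3) (the quasi-isomorphism $\cgsnr{\bullet}\hookrightarrow\cgsn{\bullet}$), just as you do. Your additional observations about the isomorphisms being induced by the natural inclusions and the argument being uniform in $M$ are correct and consistent with the paper.
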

 
 \begin{remark}
If $\aaa$ is a presheaf of $k$-linear categories, then the new differential $d$ does not reduce to the old $d$ from \S \ref{parGScompl}. However, on the quasi-isomorphic subcomplex $\cgsnr{\bullet} \subseteq \cgs{\bullet}$ of normalized reduced cochains defined in \S \ref{parnormred}, they do coincide in this case.
\end{remark}

\subsection{First-order deformations of prestacks}\label{pardefo}
In this section, generalizing \cite[Thm 2.21]{dinhvanliulowen}, we prove that $HH^2_{\mathrm{GS}}$ classifies first order deformations of prestacks.

\begin{definition}\label{defprestack} (see Def 3.24 in \cite{lowenmap})
Let $(\aaa, m, f, c)$ be a prestack over $\uuu$.
\begin{enumerate}
\item
 A \emph{first order  deformation} of $\aaa$ is given by a prestack
$$(\bar{\aaa}, \bar{m}, \bar{f}, \bar{c}) = (\aaa[\epsilon], m + m_1\epsilon, f + f_1\epsilon, c + c_1\epsilon)$$
of $k[\epsilon]$-categories  where $(m_1, f_1, c_1)\in \CC^{0,2}(\aaa) \oplus \CC^{1,1}(\aaa) \oplus \CC^{2,0}(\aaa)$.

\item
 For two  deformations $(\bar{\aaa}, \bar{m}, \bar{f}, \bar{c})$ and $(\bar{\aaa}', \bar{m}', \bar{f}', \bar{c}')$ an \emph{equivalence of  deformations} is given by an isomorphism of the form $(g,\tau)=(1+g_1\epsilon,1+\tau_1\epsilon)$ where $(g_1,\tau_1)\in\CC^{0,1}(\aaa)\oplus\CC^{1,0}(\aaa)$.
\end{enumerate}
\end{definition}

\begin{theorem}\label{proptwist}
	Let $\aaa = (\aaa, m, f, c)$ be prestack with Gerstenhaber-Schack complex $(\CC_\mathrm{GS}^\bullet(\aaa),d)$. Then the second cohomology $HH^2_\mathrm{GS}(\aaa)$ classifies the first order deformation of $\aaa$. More concretely
	\begin{enumerate}
		\item
		\begin{enumerate}
			 For $(m_1, f_1, c_1)$ in $\CC^{0,2}(\aaa) \oplus \CC^{1,1}(\aaa) \oplus \CC^{2,0}(\aaa)$, we have that $(\aaa[\epsilon], \bar{m}=m + m_1\epsilon,\bar{f}= f + f_1\epsilon, \bar{c}=c+c_1\epsilon)$ is a first order  deformation of $\aaa$ if and only if $(m_1,f_1,c_1)\in\bar{\CC}_\mathrm{GS}'^2(\aaa)$ and $d(m_1, f_1, c_1) = 0$.
		\end{enumerate}
		\item
		\begin{enumerate}
			 For $(m_1, f_1, c_1)$ and $(m_1', f_1', c_1')$ in $Z^2\bar{\CC}_\mathrm{GS}'(\aaa)$, and $(g_1, -\tau_1) \in \CC^{0,1}(\aaa) \oplus \CC^{1,0}(\aaa)$, we have that $(g,\tau)=(1 + g_1\epsilon, 1 + \tau_1\epsilon)$ is an isomorphism between prestacks $\bar{\aaa}$ and $\bar{\aaa}'$ if and only if $(g_1,-\tau_1)\in \bar{\CC}_\mathrm{GS}'^1(\aaa)$ and $d(g_1, -\tau_1) = (m_1, f_1, c_1)-(m_1', f_1', c_1').$
			 	We have an isomorphism of sets
			\begin{equation}\label{eqclass}
			H^2\bar{\CC}_\mathrm{GS}'(\aaa) \lra \Def(\aaa).
			\end{equation}
			Hence, the second cohomology group $HH^2(\aaa)_{\mathrm{GS}} \cong H^2\bar{\CC}_\mathrm{GS}'(\aaa)$ classifies first order  deformations of $\aaa$ up to equivalence.
		\end{enumerate}
	\end{enumerate}
\end{theorem}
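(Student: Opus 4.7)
The plan is to translate the four axioms defining a prestack --- unital associativity of each $\bar{m}^U$, functoriality of each restriction $\bar{f}^u$, naturality of each twist $\bar{c}^{u,v}$, and the coherence relation \eqref{compatibility} --- into the four bidegree components of $d(m_1,f_1,c_1)=0$ living in $\CC^{0,3}(\aaa)\oplus\CC^{1,2}(\aaa)\oplus\CC^{2,1}(\aaa)\oplus\CC^{3,0}(\aaa)$. The normalization conditions $\bar{c}^{1,v}=\bar{c}^{u,1}=1$ and $\bar{f}^{1_U}=1_U$, together with strict unitality of $\bar{m}$, automatically put $(m_1,f_1,c_1)$ inside the normalized reduced subcomplex $\bar{\CC}_\mathrm{GS}'^{\,2}(\aaa)$, so by Proposition \ref{normalreducefinal} nothing is lost by working there throughout.

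Concretely, I would expand each of the prestack axioms for $(\bar{m},\bar{f},\bar{c})$ modulo $\epsilon^{2}$. Associativity of $\bar{m}^U$ at each $U$ yields the ordinary Hochschild cocycle equation $d_0(m_1)=d_\mathrm{Hoch}(m_1)=0$ in $\CC^{0,3}(\aaa)$. Functoriality of $\bar{f}^u$ with respect to composition produces $d_0(f_1)+d_1(m_1)=0$ in $\CC^{1,2}(\aaa)$; the $d_1(m_1)$ term appears because the right-hand side of $\bar{f}^u(\bar{m}(a,b))=\bar{m}(\bar{f}^u(a),\bar{f}^u(b))$ involves $\bar{m}$. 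Naturality of $\bar{c}^{u,v}$ at an arrow of $\aaa(U)$ gives $d_0(c_1)+d_1(f_1)+d_2(m_1)=0$ in $\CC^{2,1}(\aaa)$, the $m_1$ contribution arising because the two sides of the naturality square are composed using $\bar{m}$. Finally, the first-order expansion of the coherence relation \eqref{compatibility} for a triple $w,v,u$ delivers $d_1(c_1)+d_2(f_1)+d_3(m_1)=0$ in $\CC^{3,0}(\aaa)$: the two summands of $d_3(m_1)^{\sigma}(A)$ correspond bijectively to the two elements of $\ppp(u_1,u_2,u_3)$, which themselves label the two sides of \eqref{compatibility}. Reversing this dictionary, any normalized reduced $2$-cocycle determines a genuine first-order deformation, which proves (1).

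For (2), the same strategy is applied to the morphism-of-prestacks axioms for $(g,\tau)=(1+g_1\epsilon,1+\tau_1\epsilon)$. Axiom (3) of the definition, together with the preservation of identities by the functors $g^V$, forces $(g_1,-\tau_1)\in\bar{\CC}_\mathrm{GS}'^{\,1}(\aaa)$. Axiom (1) at order $\epsilon$ reproduces the $\CC^{0,2}(\aaa)\oplus\CC^{1,1}(\aaa)$ components of $d(g_1,-\tau_1)=(m_1,f_1,c_1)-(m_1',f_1',c_1')$, while axiom (2) reproduces the $\CC^{2,0}(\aaa)$ component, where a correction involving $m_1,m_1'$ again arises from the ternary compositions $m'^{W}(\tau^{uv},c'^{u,v})$ and $m'^{W}(g^W(c^{u,v}),\tau^v,v'^{\ast}(\tau^u))$. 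Once every axiom has been matched, the passage to cohomology classes yields the bijection \eqref{eqclass}, since equivalent deformations are by construction those whose first-order cochains differ by a $d$-coboundary.

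The principal obstacle is the combinatorial bookkeeping in the $\CC^{3,0}(\aaa)$-component, where all three of $m_1,f_1,c_1$ contribute simultaneously and where the higher component $d_3$ plays its first essential role. One must verify that the two signed paths of $\ppp(u_1,u_2,u_3)$ that define $d_3(m_1)$ reproduce exactly the two sides of the deformed coherence relation, and similarly match the $d_2(f_1)$ piece against the $f$-variations and the $d_1(c_1)$ piece against the $c$-variations in \eqref{compatibility}. This is precisely the place where the higher components of the new differential from \S \ref{parnewdif} are vindicated by the geometry of prestacks. Everywhere else the matching reduces to a routine rewriting of standard Hochschild or simplicial identities and presents no conceptual difficulty.
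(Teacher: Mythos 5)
Your proposal is correct and follows essentially the same route as the paper: expand each prestack axiom (unitality/associativity of $\bar{m}$, compatibility of $\bar{f}$, naturality of $\bar{c}$, the coherence relation) and each morphism-of-prestacks axiom to first order in $\epsilon$, match them with the bidegree components $d_0 m_1$, $d_0 f_1+d_1 m_1$, $d_0 c_1+d_1 f_1+d_2 m_1$, $d_1 c_1+d_2 f_1+d_3 m_1$ of the new differential (with the two paths in $\ppp(u_1,u_2,u_3)$ giving the two sides of \eqref{compatibility}), and use the unit and degeneracy conditions plus Proposition \ref{normalreducefinal} to work in $\bar{\CC}_\mathrm{GS}'^{\bullet}(\aaa)$. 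The only cosmetic inaccuracy is in part (2), where the $\CC^{0,2}$-component $d_\h g_1=m_1-m_1'$ comes from $g^U$ being a functor between the deformed categories rather than from axiom (1); otherwise the matching is exactly the paper's.
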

\begin{proof}

	\begin{enumerate}
	\item For each $U\in\uuu$, the composition $\bar{m}^U$ of $\aaa(U)$ is associative if and only if $$(d_\h m_1)^U=0.$$
	For each $a\in\aaa(U)(A,B)$, the unity condition $\bar{m}^U(1_B,a)=a=\bar{m}^U(a,1_A)$ holds if and only if $m_1^U(1_B,a)=m_1^U(a,1_A)=0$.\\
	
	For each $1$-simplex $\sigma=(V\overset{v}{\lra} U)$ and $(a,b)\in\aaa(U)(A,B)\times \aaa(U)(B,C)$. The condition $\bar{m}^V(\bar{f}(b),\bar{f}(a))=\bar{f}(\bar{m}^U(b,a))$ holds if and only if $$(d_\h f_1)^\sigma(b,a)-(d_\s m_1)^\sigma(b,a)=0.$$
	The condition $\bar{f}^\sigma(1_A)=1_{\bar{f}^\sigma(A)}$ is equivalent to $f_1^\sigma(1_A)=0$. The condition $\bar{f}^{1_U}=1_U$ holds if and only if $f_1^{1_U}=0$.\\
	
	For each $2$-simplex $\sigma=(W\overset{v}{\lra}V\overset{u}{\lra}U)$ and $a\in\aaa(U)(A,B)$, the condition $\bar{m}(\bar{c}^{u,v,B},\bar{f}^v\bar{f}^u(a))=\bar{m}(\bar{f}^{uv}(a),\bar{c}^{u,v,A})$ holds if and only if $$(d_\h c_1)^\sigma(a)-(d_\s f_1)^\sigma(a)+(d_2 m_1)^\sigma(a)=0.$$
	The condition that $\bar{c}^{\sigma}=1$ when $\sigma$ is degenerated  holds if and only if $c_1^\sigma=0$ if $\sigma$ is degenerated. 
	
	For each $3$-simplex $\sigma=(T\overset{w}{\lra}W\overset{v}{\lra}V\overset{u}{\lra}U)$, the compatibility of $\bar{c}$ holds if and only if $$-(d_\s c_1)^\sigma(A)+(d_2f_1)^\sigma(A)+(d_3m_1)^\sigma(A)=0.$$
	
	Recall that 
	\begin{align*}
	\ \ \ \ \ \ \ \ \ 	d(m_1,f_1,c_1)&=(d_\h m_1,\  d_\h f_1 -d_\s m_1,\ d_\h c_1 -d_\s f_1+d_2m_1,\\
		&\ \ \ -d_\s c_1+d_2f_1+d_2m_1).
	\end{align*}
	These facts yield that $(m_1,f_1,c_1)$ gives rise to a deformation of the prestack $\aaa$ if and only if it is a normalized reduced cocycle.
	
	\item For each $U\in\uuu$, we have that $g^U$ is a functor  if and only if $g_1^U(1)=0$ and $$d_\h (g_1)=m_1-m_1'.$$ 
	
	For each $1$-simplex $\sigma=(V\overset{u}{\lra}U)$ and $a\in\aaa(U)(A,B)$, the condition $m'^V(g^Vu^*(a),\tau^u)=m'^V(\tau^u,u'^* g^U(a))$ holds if and only if $$(d_\h g_1)^\sigma(a)+(d_\s (-\tau_1))^\sigma(a)=f_1^\sigma(a)-f_1'^\sigma(a).$$
The condition $m'^U(\tau^{1_U},1'_U)=g^U(1_U)$ holds if and only if $\tau_1^{1_U}=0$.	
	
	For each $2$-simplex $\sigma=(W\overset{v}{\lra}V\overset{u}{\lra}U)$ and $A\in\aaa(U)$, the condition $m'^W(\tau^{uv},c'^{u,v})=m'^W(g^W(c^{u,v}),\tau^v,v'^*(\tau^u))$ holds if and only if $$(d_\s (-\tau_1))^\sigma(A)+(d_2 g_1)^\sigma(A)=c_1^{\sigma}(A)-c_1'^\sigma(A).$$ 
	
	Hence  $(g,\tau) = (1 +g_1\epsilon, 1+ \tau_1\epsilon)$ is an isomorphism between $\aaa$ and $\aaa'$ if and only
if $(g_1,-\tau_1)$ is a normalized reduced cochain and
\begin{align*}
d(g_1,-\tau_1)&=\big(d_\h g_1,\ d_\h(-\tau_1)+d_\s g_1,\ d_\s(-\tau_1)+d_2g_1\big)\\
&=(m_1,f_1,c_1)-(m_1',f_1',c_1').
\end{align*}

	\end{enumerate}
\end{proof}

\section{Comparision of complexes}\label{parparcomp}

Let $\uuu$ be a small category, $\aaa$ a prestack on $\uuu$, and $M$ an $\aaa$-bimodule. In this section, we define cochain maps 
$$\fff:\cgs{\bullet}\lra \ctil{\bullet} \hspace{0,5cm} \text{and} \hspace{0,5cm} \GGG: \ctil{\bullet} \lra \cgs{\bullet}$$
between the Gerstenhaber-Schack complex $\cgs{\bullet}$ and the Hochschild complex $\CC_{\uuu}^\bullet(\tilde{\aaa},\tilde{M})$ as defined in \cite{lowenmap}. We prove that $\fff$ and $\GGG$ are inverse quasi-isomorphisms. In combination with \cite[Prop. 3.13]{lowenmap} and the Cohomology Comparison Theorem \cite[Thm. 1.1]{lowenvandenberghCCT} it follows that - as in the case of presheaves - if $k$ is a field then the cohomology of the complex $\cgs{\bullet}$ computes bimodule Ext groups. More precisely, we obtain
$$HH^n_{\mathrm{GS}}(\aaa, M) \cong H^n(\ctil{\bullet})\cong \mathrm{Ext}^n_{\tilde{\aaa}-\tilde{\aaa}}(\tilde{\aaa},\tilde{M})\cong \mathrm{Ext}^n_{\aaa-\aaa}(\aaa,M).$$
The differential on $\CC^{\bullet}_{\mathrm{GS}}(\aaa,M)$ being more involved than in the presheaf case, here too we have to come up with more subtle definitions for $\fff$ and $\GGG$, using partitions and making intensive use of the shuffle machinery. The proof of the resulting quasi-isomorphism has two parts. In Proposition \ref{propGF}, we prove that $\GGG\fff(\phi)=\phi$ for any normalized reduced cochain $\phi$. The harder work goes into Theorem \ref{homotopy}, in which we construct a homotopy $\fff \GGG \sim 1$. 
Our Theorem \ref{homotopy} has a powerful consequence, as by the Homotopy Transfer Theorem \cite[Theorem 10.3.9]{lodayvallette}, we can transfer the  dg Lie algebra structure present on $\CC_{\uuu}^\bullet(\tilde{\aaa})$ (see \cite{lowenmap}) in order to obtain an $L_{\infty}$-structure on $\CC^{\bullet}_{\mathrm{GS}}(\aaa)$. This $L_{\infty}$-structure determines the higher deformation theory of $\aaa$ as a prestack, which thus becomes equivalent to the higher deformation theory of the $\uuu$-graded category $\tilde{\aaa}$ described in \cite{lowenmap}. A more detailed elaboration of this $L_{\infty}$-structure, as well as a comparison with the $L_{\infty}$ deformation complex described in the literature in an operadic context \cite{fregiermarklyau},\cite{doubek},\cite{merkulovvallette} will appear in \cite{dinhvan}.

\subsection{The cochain map $\fff$}
Following \cite{lowenmap} the Hochschild complex $(\ctil{\bullet},\delta)$ of the $\uuu$-graded category $\tilde{\aaa}$ is defined  as
$$\ctil{\bullet}=\prod_{\substack{u_1,\dots,u_n\\A_0,A_1,\dots,A_n}}=\mathrm{Hom}_k\big(\otimes_{i=1}^n\tilde{\aaa}_{u_{n+1-i}}(A_{n-i},A_{n+1-i}),\tilde{\aaa}_{u_n\cdots u_1}(A_0,A_n))\big)$$
where $\delta$ is the usual Hochschild differential.

In order to define the cochain map $$\fff:\cgs{\bullet}\lra \ctil{\bullet}$$ we need to introduce the following notations. For each $n\in\mathbb{N}$ denote the set of all partitions of $n$ as 
\[\Part{n}=\{\bar{m}=(m_k,\dots,m_1)|\ m_k+\dots+m_1=n,\ k\ge 1,  m_i\ge 1\}\]
We define $(-1)^{\bar{m}}=(-1)^{n-k}$ for $\bar{m}=(m_k,\dots,m_1)$.

Let $\sigma=(u_1,\dots,u_n)$ be a $n$-simplex as in $(\ref{eqsigma0})$, denote $||\sigma||=u_n\cdots u_1$. For $i\le k$ denote by $\sigma[m_i]$ the $m_i$-simplex $(u_{m_k+\dots+m_{i+1}+1},\dots,u_{m_k+\dots+m_i})$. For example, we have $\sigma[m_k]=(u_1,\dots,u_{m_k})$ and $\sigma[m_{k-1}]=(u_{m_k+1},\dots,u_{m_k+m_{k-1}})$. Put $c^{\sigma,\bar{m}} = ||r||$ for an arbitrary $r\in\ppp(||\sigma[m_k]||,\dots,||\sigma[m_1]||)$.

Given
$A_i\in \mathrm{Ob}(\tilde{\aaa}(U_i))$, consider $\tilde{a}=(\tilde{a}_1,\dots,\tilde{a}_n)$ where 
$$\tilde{a}_i\in \tilde{\aaa}_{u_{n+1-i}}(A_{n-i}, A_{n+1-i}) = \aaa(U_{n-i})(A_{n-i}, u_{n+1-i}^{\ast}A_{n+1-i})$$ as follows:
\begin{equation}\label{tildea}
\xymatrix{{A_0}\ar[r]^-{\tilde{a}_n}&{A_1} \ar[r]^-{\tilde{a}_{n-1}}&\cdots \ar[r]^-{\tilde{a}_2}&A_{n-1}\ar[r]^-{\tilde{a}_1}& A_n \\
	{U_0} \ar[r]_{u_1} & {U_1} \ar[r]_{u_2} & {\dots} \ar[r]_{u_{n-1}} & {U_{n-1}} \ar[r]_{u_n} & {U_n.}}
\end{equation}

For each $i=1, \dots, n$, denote 
\begin{align*}
&\underline{\tilde{a}}_i= u_1^*\cdots u_{n-i}^* \tilde{a}_i\in \aaa(U_0)(u_1^*\cdots u_{n-i}^*A_{n-i},u_1^*\cdots u_{n+1-i}^*A_{n+1-i});\\
&\tilde{a}_{i,\dots,n}=\underline{\tilde{a}}_i\circ\dots\circ\underline{\tilde{a}}_n\in \aaa(U_0)(A_0,u_1^*\cdots u_{n+1-i}^*A_{n+1-i}).
\end{align*}
Given a partition $\bar{m}=(m_k,\dots,m_1)\in\Part{n}$, denote
$$\tilde{a}[m_i] = \underline{\tilde{a}}_{m_{i-1}+\dots+m_1+1}    \circ\dots\circ   \underline{\tilde{a}}_{m_i+\dots+m_1},$$
thus $\tilde{a}[m_1]=\underline{\tilde{a}}_1\circ\dots\circ\underline{\tilde{a}}_{m_1}$ and $\tilde{a}[m_k]=\tilde{a}_{n-m_k+1,\dots,n}$.\\

For $r=(r_1,\dots,r_{n-1})\in \ppp(\sigma)$, we obtain the following $n$-simplex in $\aaa(U_0)$:
$$(r(A_n),\tilde{a}_{1,\dots,n}) \equiv (r_1(A_n),\dots,r_{n-1}(A_n),\tilde{a}_{1,\dots,n}).$$
Now for each partition $\bar{m}=(m_k,\dots,m_1)$ of $n$ we define by induction a set 
$$\Seq(\sigma,\bar{m}) \equiv \Seq(\sigma,\tilde{a},\bar{m}) \subseteq \nnn_n(\aaa(U_0))(A_0, \sigma[m_k]^{\ast}\cdots \sigma[m_1]^{\ast}A_n)$$
along with a sign map $$\Seq(\sigma,\bar{m}) \lra \{ 1, -1\}: \xi \longmapsto \mathrm{sign}(\xi) \equiv (-1)^{\xi}.$$
Simultaneously, for each sequence $\xi \in \Seq(\sigma,\bar{m})$ we define the {\em  formal sequence} $\underline{\xi}$ of $\xi$, then denote the set of all these formal sequences
$$\underline{\Seq}(\sigma,\bar{m})=\{\underline{\xi}| \ \xi \in \Seq(\sigma,\bar{m})\}.$$
\begin{itemize}
	\item For $k=1$, $\bar{m}=(m_1)$ where $m_1=n$, we define  $$\Seq(\sigma,\bar{m})=\{(r(A_n), \tilde{a}_{1,\dots,n})\,\, |\,\, r\in\ppp(\sigma)\}.$$
	For each element $\xi=(r(A_n),\tilde{a}_{1,\dots,n})\in \Seq(\sigma,\bar{m})$ we define 
	$$\mathrm{sign}(\xi)=(-1)^r.$$
	The formal sequence of $\xi$ is defined to be $$\underline{\xi}=(r,\tilde{a}_{1,\dots,n}).$$
	\item For $k\ge 2$, $R_{m_k}\sigma$ is an $(n-m_k)$-simplex. Let $\xi=(\xi_1,\dots,\xi_{n-m_k})\in \Seq(R_{m_k}\sigma, (m_{k-1},\dots,m_1)) \subseteq \nnn_{n-m_k}(\aaa(U_{m_k}))(A_{m_k}, \sigma[m_{k-1}]^{\ast}\cdots \sigma[m_1]^{\ast}A_n)$. Let $\underline{\xi}=(\underline{\xi}_1,\dots,\underline{\xi}_{n-m_k})$ be the formal sequence of $\xi$.
	\begin{enumerate}
		\item[(i)] Case $m_k=1$. Let $u_1^*\xi=(u_1^*\xi_1,\dots,u_1^*\xi_{n-m_k})$, then we obtain the concatenation
		$$(u_1^*\xi,\tilde{a}_n) \in \nnn_n(\aaa(U_0))(A_0, \sigma[m_k]^{\ast}\cdots \sigma[m_1]^{\ast}A_n).$$ We define 
		$$\Seq(\sigma,\bar{m})=\{(u_1^*\xi,\tilde{a}_n)\,\, |\,\, \xi\in \Seq(R_{m_k}\sigma,(m_{k-1},\dots,m_1)) \}.$$
		For each element $\xi'=(u_1^*\xi,\tilde{a}_n)\in \Seq(\sigma,\bar{m})$, we define 
		$$\mathrm{sign}(\xi')=\mathrm{sign}(\xi).$$
		Now we define 
		the formal sequence of $\xi'$ to be $$\underline{\xi'}=(\underline{\xi},\tilde{a}_n).$$

		\item[(ii)] Case $m_k\ge 2$. For $s\in\ppp(L_{m_k}\sigma)$ and $\beta\in S_{n-m_k,m_k-1}$, we obtain 
		the shuffle $$\xi\underset{\beta}{*}s \in \nnn_{n-1}(\aaa(U_0))(\sigma[m_k]^{\star}A_{m_k}, \sigma[m_k]^{\ast}\cdots \sigma[m_1]^{\ast}A_n)$$
		taken with respect to evaluation of functors. Concatenation with $\tilde{a}_{n+1-m_k,\dots,n} \in \aaa(U_0)(A_0, \sigma[m_k]^{\star}A_{m_k})$ yields an
		$n$-simplex 
		$$(\xi\underset{\beta}{*}s,\tilde{a}_{n+1-m_k,\dots,n}) \in \nnn_n(\aaa(U_0))(A_0, \sigma[m_k]^{\ast}\cdots \sigma[m_1]^{\ast}A_n).$$ Put $m'=(m_{k-1},\dots,m_1)$. We define 
		\begin{align*}
		\Seq(\sigma,\bar{m})=&\{(\xi\underset{\beta}{*}r,\tilde{a}_{n+1-m_k,\dots,n})|\ \xi\in  \Seq(R_{m_k}\sigma, m'),r\in\ppp(R_{m_k}\sigma),\\
		&\beta\in S_{n-m_k,m_k-1}\}.
		\end{align*}
		
		For each element $\xi'=(\xi\underset{\beta}{*}r,\tilde{a}_{n+1-m_k,\dots,n})\in\Seq(\sigma,\bar{m})$ we define $$\mathrm{sign}(\xi')=(-1)^r(-1)^\beta\mathrm{sign}(\xi).$$
		Let $\beta(\underline{\xi},r)$ be the formal shuffle product of $\underline{\xi}$ and $r$. The formal sequence of $\xi'$ is defined to be 
		$$\underline{\xi'}=(\beta^{(0)}(\underline{\xi},r),\tilde{a}_{n+1-m_k,\dots,n}).$$
		
	\end{enumerate}
\end{itemize}

\begin{example}
	Consider a partition $m=(m_3,m_2,m_1)$ of $n$ where $m_i\ge 2$. Each element $\xi\in\Seq(\sigma,\bar{m})$ is of the form 
	$$\xi=\bigg(\big((r_1,\tilde{a}[m_1])\underset{\beta_1}{*}r_2,\tilde{a}[m_2]\big)\underset{\beta_2}{*}r_3,\tilde{a}[m_3]\bigg)$$ 
	where $r_1\in\ppp(\sigma[m_1]),\ r_2\in \ppp(\sigma[m_2]),\ r_3\in\ppp(\sigma[m_3])$ and $\beta_1\in S_{m_1,m_2-1},\beta_2\in S_{m_1+m_2,m_3-1}$. 
\end{example}

Now we are able to define the maps $F_p:\cgs{p,n-p}\lra \CC^n(\tilde{\aaa},\tilde{M})$. Let $\sigma=(u_1,\dots, u_n)$ be an $n$-simplex and $\tilde{a}=(\tilde{a}_1,\dots,\tilde{a_n})$ as in $(\ref{tildea})$. For each cochain $\phi=(\phi_{p,q})\in\cgs{n}$, we define
$$(\fff_p\phi_{p,n-p})(\tilde{a})=\sum_{\bar{m}\in\Part{n-p}}\sum_{\xi\in \Seq(R_p\sigma,\bar{m})}(-1)^{\bar{m}+\xi} \fff_p^{\sigma,\bar{m},A_n}\phi_{p,n-p}^{L_p\sigma}(\xi)\tilde{a}_{n+1-p,\dots,n}$$
where $\fff_p^{\sigma,\bar{m},A_n}=c^{\sigma,p,A_n}(L_p\sigma)^*c^{R_p\sigma,\bar{m},A_n}$. The map $\fff$ is as follows 
$$\fff(\phi)=\sum_{p+q=n}\fff_p(\phi_{p,q}).$$

\begin{proposition}\label{Fcommutes}
	The map $\fff$ commutes with differentials. More precisely, let $p+q=n-1$, for $\phi\in\cgs{p,q}$, then $F(d\phi)=\delta(F\phi)$. 
\end{proposition}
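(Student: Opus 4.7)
The plan is to fix an $n$-simplex $\sigma=(u_1,\dots,u_n)\in\nnn_n(\uuu)$ and a chain $\tilde{a}=(\tilde{a}_1,\dots,\tilde{a}_n)$ as in \eqref{tildea}, evaluate both sides at $(\sigma,\tilde{a})$, and match terms indexed by triples (partition $\bar{m}$, sequence $\xi\in\Seq$, position $i$). On one side we have
\[
F(d\phi)(\sigma,\tilde{a}) \;=\; \sum_{j=0}^{q+1}\fff_{p+j}(d_j\phi)(\sigma,\tilde{a}),
\]
where $d_0=d_\h$, $d_1=(-1)^n d_\s$ and the higher $d_j$ are the shuffled contributions from \eqref{defnewdif}. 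On the other side $\delta(\fff_p\phi)(\sigma,\tilde{a})$ decomposes as an outer-$\mu$ action on the left plus $(-1)^i\fff_p\phi(\dots,\mu(\tilde{a}_i,\tilde{a}_{i+1}),\dots)$ for $1\le i\le n-1$ plus an outer-$\mu$ action on the right. Each side then expands into a weighted sum over $(\bar{m},\xi)$, and the task is a sign-respecting bijection between the two indexing sets.

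The key mechanism is the twist identity $\mu(\tilde{a}_i,\tilde{a}_{i+1})=m(c^{u_{n+1-i},u_{n-i}},u_{n-i}^*\tilde{a}_i,\tilde{a}_{i+1})$ for composition in $\tilde{\aaa}$. I would split the interior terms of $\delta(\fff_p\phi)$ according to whether $\tilde{a}_i,\tilde{a}_{i+1}$ lie in the same block of the partition $\bar{m}$ or in two adjacent blocks. In the first case, the twist $c^{u_{n+1-i},u_{n-i}}$ produced by $\mu$ is absorbed into the existing path isomorphism $c^{R_p\sigma,\bar{m}}$ via the coherence \eqref{compatibility}, and what remains is precisely a summand of $\fff_p(d_\h\phi)$ evaluated at an interior Hochschild index. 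In the second case, Lemma~\ref{jointpaths} lets us re-assemble two adjacent path factors joined by a twist $c^{\sigma,k}$ into a single path over the merged block, producing a term indexed by a \emph{coarser} partition; these pair exactly with the contributions of the higher $\fff_{p+j}(d_j\phi)$ for $j\ge 2$ through the shuffle of a path with $\tilde{a}$ in \eqref{defnewdif}. The two outer boundaries of $\delta$, together with the edge faces $\partial_0,\partial_{p+1}$ of $d_1=(-1)^n d_\s$, account for the bimodule actions on the left and right, while the intermediate simplicial faces for $1\le i\le p$ correspond to the insertion of a twist $\epsilon^{\sigma,i}$ inside $\Seq$, by the recursive definition of paths.

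The main obstacle is the sign and shuffle bookkeeping. Signs accumulate from four independent sources --- the partition sign $(-1)^{\bar{m}}$, the recursive sequence sign $(-1)^\xi$ built from $(-1)^r$ for paths and $(-1)^\beta$ for shuffles, the Hochschild index sign $(-1)^i$, and the global $(-1)^n$ --- and one must verify that after the matching above all residual terms cancel in pairs. The cancellation mechanism should be the same as in the proof of Theorem~\ref{propnewdif}: adjacent transpositions $\beta\mapsto(k,k+1)\circ\beta$ produce sign-reversed pairs whenever two $\tilde{a}$-elements or two path-elements are adjacent in the shuffled sequence, and the flip identities \eqref{flipequation1}--\eqref{flipequation2} produce sign-reversed pairs whenever a path can be re-routed. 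To keep the case analysis tractable I would first verify the presheaf case $c^{u,v}=1$, where each $\ppp(\sigma)$ collapses to a point and the partition sum reduces to the classical Gerstenhaber--Schack formula, and then extend to arbitrary prestacks by carefully tracking the additional twist factors produced by \eqref{compatibility}. The reduction to normalized reduced cochains, via Propositions~\ref{inclusion1} and~\ref{inclusion2}, will also be useful as a sanity check since most degenerate partitions and paths vanish there.
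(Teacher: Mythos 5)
Your overall strategy is indeed the paper's: evaluate both sides on a fixed $(\sigma,\tilde{a})$, expand into sums over partitions, paths and shuffles, and produce a sign-respecting matching whose cancellation mechanisms are transpositions of adjacent shuffled entries and the flip identities. But the two concrete pairings you describe for the interior terms are misassigned, and one of your mechanisms is not a legitimate move. First, the twist produced by $\mu(\tilde{a}_i,\tilde{a}_{i+1})$ cannot be ``absorbed into $c^{R_p\sigma,\bar{m}}$'': after expanding $(\delta_i\fff_p\phi)^\sigma(\tilde{a})$ that twist sits \emph{inside} an argument of the arbitrary $k$-linear map $\phi$ (in the middle or at the end of a block composite), so it cannot be moved out to the external prefactor; moreover $c^{\sigma,\bar{m}}$ depends only on the composites $||\sigma[m_i]||$ and is unchanged under the face, so no absorption is needed. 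What actually happens (Step 2, Case 3 of the paper's proof) is that the interior faces of $\delta$ match those GS-Hochschild faces $\fff_p(d_\h^i\phi)$ in which the last element of a path, an evaluated $\epsilon^{\sigma[m_t],j}$, is composed with the adjacent $\tilde{a}$-block; the $\mu$-twist reappears precisely as that path element, with the matching partition $(m_k,\dots,m_t-1,\dots,m_1)$.

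Second, the higher components $\fff_{p+j}(d_j\phi)$, $j\ge 2$, cannot cancel against interior faces of $\delta$ indexed by coarser partitions: after expanding \eqref{defnewdif} these terms carry the external right factor $\tilde{a}_{n+1-p-j,\dots,n}$ (a composite of the last $p+j$ restricted elements, outside $\phi$), whereas the interior faces of $\delta(\fff_p\phi)$ away from the source region carry the shorter factor $\tilde{a}_{n+1-p,\dots,n}$; since $\phi$ is arbitrary, such terms can never coincide. In the paper they are cancelled, together with $\fff d_\s^{p+1}\phi$, against the \emph{top} GS-Hochschild face $\fff_p d_\h^{n-p}\phi$, exactly because the entry stripped off by $d_\h^{n-p}$ is the final $\tilde{a}$-block, which supplies the longer right factor; this is the paper's equation (i)/(i$'$). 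The coarser-partition mechanism you invoke (a leading $c^{\gamma,k}$ plus Lemma \ref{jointpaths}) is used instead for an internal cancellation within $\fff(d_\h\phi)$, pairing interior Hochschild faces at block--block junctures with $\fff d_\h^0\phi$, and the simplicial faces $d_\s^i$, $0\le i\le p$, match $\delta_{n-i}$ through twists appearing in the right factor, not through insertions of $\epsilon^{\sigma,i}$ inside $\Seq$. Since the entire content of the proposition is this bookkeeping, the misassigned matchings are a genuine gap; likewise, verifying the presheaf case and then ``tracking the additional twist factors'' does not substitute for the general matching, because the higher $d_j$ and the paths are exactly the features absent from the presheaf case.
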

\begin{proof}
	Let $\sigma=(u_1,\dots,u_n)$ be a $n$-simplex as in $(\ref{eqsigma0})$, and let $\tilde{a}=(\tilde{a}_1,\dots,\tilde{a}_n)$ as in $(\ref{tildea})$. First, we prove that $\fff(d\phi)=\delta(\fff\phi)$ for the case $\phi\in \cgs{0,n-1}$. \\The  equation 
	\begin{equation}\label{Fcochain0}
	\sum_{i=0}^n(-1)^i \fff d_\h^i\phi+(-1)^n\fff (d^0_\s-d^1_\s)\phi+\sum_{i=2}^n\fff d_i\phi=\sum_{i=0}^n\delta_n\fff\phi
	\end{equation}
	holds true if the following equations hold true:
	\begin{itemize}
		\item [(i)]  $-(-1)^{n}\fff d_\h^n\phi=(-1)^{n+1}\fff d_\s^1\phi+\sum_{i=2}^n\fff d_i\phi $;\\
		\item[(ii)] $ \fff d_\s^0\phi=\delta_n\fff \phi$;\\
		\item[(iii)]  $\fff d_\h^0\phi+\sum_{i=1}^{n-1}(-1)^i\fff d_\h^i\phi=\sum_{i=0}^{n-1}(-1)^i\delta_i\fff\phi$.
	\end{itemize}
	
	{\em Step 1}. We prove the equation (i). Note that $L_0\sigma=(U_0)$ is a $0$-simplex, by definition, we have  \\ $$(-1)^{n+1}(\fff d_\h^n\phi)^\sigma(\tilde{a})=\sum_{\bar{m}\in\Part{n}}\sum_{\xi\in \Seq(\sigma,\bar{m})}T(\bar{m},\xi)$$ where 
	$$T(\bar{m},\xi)=(-1)^{n+1}(-1)^{\bar{m}+\xi}c^{\sigma,\bar{m},A_n}(d_\h^n\phi)^{U_0}(\xi).$$
	On the right hand side, we have 
	$$(-1)^{n+1}(\fff d_\s^1\phi)^\sigma(\tilde{a})=\sum_{\substack{\bar{m}'\in \Part{n-1}\\ \xi'\in \Seq(R_1\sigma,\bar{m}')}}(-1)^{n+1}(-1)^{\bar{m}'+\xi'}
	\fff_1^{\sigma,\bar{m}',A_n}(d_\s^1\phi)^{L_1\sigma}(\xi')\tilde{a}_n$$
	where 
	$$(d_\s^1\phi)^{L_1\sigma}(\xi')\tilde{a}_n=\phi^{U_0}(u_1^*\xi')\tilde{a}_n.$$
	
	For each $\bar{m}'=(m_k',\dots,m_1')\in\Part{n-1}$ and $\xi'\in \Seq(R_1\sigma,\bar{m}')$, let $\bar{m}=(1,m_k',\dots,m_1')\in\Part{n}$. Then by definition, there exists a unique element $\xi\in \Seq(\sigma,\bar{m})$ such that $\xi=(u_1^*\xi',\tilde{a}_n)$. Hence, we get 
	$$T(\xi,\bar{m})=(-1)^{n+1}(-1)^{\bar{m}'+\xi'}\fff_1^{\sigma,\bar{m}',A_n}(d_\s^1\phi)^{L_1\sigma}(\xi')\tilde{a}_n.$$
	So all the terms occurring in $(-1)^{n+1}(\fff d_\s^1\phi)^\sigma(\tilde{a})$ are canceled.
	
	For  $2\le i\le n$, we have
	$$(\fff d_i\phi)^{\sigma}(\tilde{a})=\sum_{\substack{\bar{m}'\in \Part{n-i}\\ \xi'\in \Seq(R_i\sigma,\bar{m}')}}(-1)^{\bar{m}'+\xi'}c^{\sigma,i,A_n}(L_i\sigma) ^*c^{R_i\sigma,\bar{m}',A_n}(d_i\phi)^{L_i\sigma}(\xi')\tilde{a}_{n+1-i,\dots,n}$$
	where 
	$$\displaystyle (d_i\phi)^{L_i\sigma}(\xi')=\sum_{\substack{r\in\ppp(L_i\sigma),\ \beta\in S_{n-i,i-1}}}(-1)^{n-i}(-1)^{r+\beta}\phi^{U_0}(\xi'\underset{\beta}{*}r).$$
	For each $\bar{m}'\in\Part{n-i}$, $r\in\ppp(L_i\sigma)$ and $ \beta\in S_{n-i,i-1}$, there exists a unique element $\xi\in\Seq(\sigma,\bar{m})$, where $\bar{m}=(n-i,\bar{m}')\in\Part{n}$, such that $\xi=(\xi'\underset{\beta}{*}r,\tilde{a}_{n+1-k,\dots,n})$. By inspection on signs, we get $$T(\xi,\bar{m})=(-1)^{\bar{m}'+\xi'}(-1)^{n-i}(-1)^{r+\beta}c^{\sigma,i,A_n}(L_i\sigma) ^*c^{R_i\sigma,\bar{m}',A_n}\phi^{U_0}(\xi'\underset{\beta}{*}r)\tilde{a}_{n+1-k,\dots,n}.$$	So every term occurring in $(\fff d_i\phi)^{\sigma}(\tilde{a})$ is canceled.
	
	\emph{Step 2.} The equation (ii) is obvious.  We prove the equation (iii). For $i=1,\dots,(n-1)$,  we have $$(\fff d_\h^i\phi)^\sigma(\tilde{a})=\sum_{\bar{m}\in\Part{n},\xi\in\Seq(\sigma,\bar{m})}(-1)^{\bar{m}+\xi}c^{\sigma,\bar{m},A_n}(d_\h^i\phi)^{U_0}(\xi).$$
	Let $\bar{m}=(m_k,\dots,m_1)$. Assume $\xi=(\xi_1,\dots,\xi_n)\in\Seq(\sigma,\bar{m})$, we have   $$(d_\h^i\phi)^{U_0}=\phi^{U_0}(\xi_1,\dots,\xi_{i}\xi_{i+1},\dots,\xi_n).$$
	Let $\underline{\xi}=(\underline{\xi}_1,\dots,\underline{\xi}_n)\in \underline{\Seq}(\sigma,\bar{m})$ be the formal sequence of $\xi$. Then
	$(\underline{\xi}_i,\underline{\xi}_{i+1})$ can only be one of the following cases
	$$(\underline{\xi}_i,\underline{\xi}_{i+1})=\left[\begin{matrix}
	(r_j,s_l) \text{ or } (s_l,r_j)  &\text{ for } r\in\ppp(\sigma[m_t]), s\in\ppp(\sigma[m_{t+1}]); \\
	(\tilde{a}[m_t],r_j) \text{ or } (r_j,\tilde{a}[m_t])  &\text{ for } r\in\ppp(\sigma[m_{t+1}]);\ \ \ \ \ \ \ \ \ \ \ \ \ \ \ \ \ \ \  \\
	(\tilde{a}[m_t],\tilde{a}[m_{t+1}]) &\text{ for some } t;   \ \ \ \ \ \ \ \ \ \ \ \ \ \ \ \ \ \  \ \ \ \ \ \ \ \ \ \ \ \ \   \\
	(r_{m_t-1},\tilde{a}[m_t])  &\text{ for } r\in\ppp(\sigma[m_{t}]). \ \ \ \ \ \ \ \ \ \ \ \ \ \ \ \ \ \  \ \ \ \   \\
	\end{matrix}\right.$$

	\emph{Case 1.} Assume that $(\underline{\xi}_i,\underline{\xi}_{i+1})=(r_j,s_l) \text{ or } (s_l,r_j)$, for some $r=(r{_1,\dots,r_{m_t-1}})\in\ppp(\sigma[m_t])$ and  $s=(s_1,\dots,s_{m_{t+1}-1})\in\ppp(\sigma[m_{t+1}])$. There exists a unique element $\xi'\in\Seq(\sigma,\bar{m})$ such that its formal sequence satisfies $$\underline{\xi'}=(\underline{\xi}_1,\dots,\underline{\xi}_{i-1},\underline{\xi}_{i+1},\underline{\xi}_{i},\underline{\xi}_{i+2},\dots,\underline{\xi}_n).$$ 
	It is obvious that $(-1)^{\xi'}=-(-1)^\xi$, hence
	$$	(-1)^{\bar{m}+\xi}c^{\sigma,\bar{m},A_n}(d_\h^i\phi)^{U_0}(\xi)+(-1)^{\bar{m}+\xi'}c^{\sigma,\bar{m},A_n}(d_\h^i\phi)^{U_0}(\xi')=0.$$
	The same argument applies for the cases $(\underline{\xi}_i,\underline{\xi}_{i+1})=	(\tilde{a}[m_t],r_j) \text{ or } (r_j,\tilde{a}[m_t]) $. \\
	
	\emph{Case 2.}   Assume $(\underline{\xi}_i,\underline{\xi}_{i+1})= (\tilde{a}[m_t],\tilde{a}[m_{t+1}]) $ for some $1\le t\le k-1$. Without loss of generality we assume that
	$$\underline{\xi}=(\underline{\xi}_1,\dots,\underline{\xi}_j,r,s,\tilde{a}[m_t],\tilde{a}[m_{t+1}],\underline{\xi}_{j+m_t+m_{t+1}+1},\dots,\underline{\xi}_n)$$
	for some paths  $r\in\ppp(\sigma[m_t]), s\in\ppp(\sigma[m_{t+1}])$.
	
	Denote  $\gamma=\sigma[m_{t}]\sqcup\sigma[m_{t+1}]$  the concatenation of the simplices $\sigma[m_{t+1}]$ and $\sigma[m_t]$, then $(c^{\gamma,m_{t+1}},r,s)$ is a path in $\ppp(\gamma)$.	
	We have $$(\fff d_\h^0\phi)^\sigma(\tilde{a})=\sum_{\bar{m}'\in\Part{n}}\sum_{\xi'\in \Seq(\sigma,\bar{m}')}(-1)^{\bar{m}'+\xi'}c^{\sigma,\bar{m}',A_n}(d_\h^0\phi)^{U_0}(\xi').$$
	Consider the partition $\bar{m}'=(m_k,\dots,m_{t+1}+m_t,\dots,m_1)$,  there exists a unique element $\xi'\in\Seq(\sigma,\bar{m}')$ such that its formal sequence satisfies
	\begin{align*}
	\underline{\xi'} &=(c^{\gamma,m_{t+1}},\underline{\xi}_1,\dots,\underline{\xi}_j,r,s,\tilde{a}[m_t],\tilde{a}[m_{t+1}],\underline{\xi}_{j+m_t+m_{t+1}+1},\dots,\underline{\xi}_n)\\
	&=(c^{\gamma,m_{t+1}},\underline{\xi}).
	\end{align*}   
	
	By computation, we get $(-1)^{\bar{m}'+\xi'}=-(-1)^{\bar{m}+\xi}$. Thus, we obtain  $$(-1)^{\bar{m}'+\xi'}c^{\sigma,\bar{m}',A_n}(d_\h^0\phi)^{U_0}(\xi')+(-1)^{\bar{m}+\xi}c^{\sigma,\bar{m},A_n}(d_\h^i\phi)^{U_0}(\xi)=0.$$

	\emph{Case 3.} Assume that $(\underline{\xi}_i,\underline{\xi}_{i+1})=(r_{m_t-1},\tilde{a}[m_t])  \text{ for some } r=(r_1,\dots,r_{m_t-1})\in\ppp(\sigma[m_{t}])$.  We have $r_{m_t-1}=\epsilon^{\sigma[m_t],j}$ for some $1\le j\le m_t-1$. Let $j'=n+1-(m_k+\dots+m_{t+1}+j)=m_1+\dots+m_t+1-j$.
	In the right hand side of equation (iii), we have 
	\begin{align*}
	(-1)^{j'}(\delta_{j'}\fff \phi)^\sigma(\tilde{a})&=(-1)^{j'}(\fff \phi)^{\partial_{n-j'}\sigma}(\partial_{j'}\tilde{a})\\
	&=\sum_{\bar{m}'\in\Part{n-1}}\sum_{\xi'\in \Seq(\partial_{n-j'}\sigma,\bar{m}')}(-1)^{j'}(-1)^{\bar{m}'+\xi'}c^{\partial_{n-j'}\sigma,\bar{m}',A_n}\phi^{U_0}(\xi').
	\end{align*}
	Choose $\bar{m}'=(m_k,\dots,m_t-1,\dots,m_1)\in\Part{n-1}$. There exists a unique element $\xi'\in\Seq(\partial_{n-j'}\sigma,\bar{m}')$ such that 
	$$(-1)^{j'}(-1)^{\bar{m}'+\xi'}c^{\partial_{n-j}\sigma,\bar{m}',A_n}\phi^{U_0}(\xi')=(-1)^{\bar{m}+\xi}c^{\sigma,\bar{m},A_n}(d_\h^i\phi)^{U_0}(\xi).$$
	After considering all cases 1,2,3 as above, we find that all the terms occurring in $\sum_{i=1}^{n-1}(\fff d_\h^i\phi)^\sigma(\tilde{a}) $ and $\sum_{i=1}^{n-1}(\delta_i\fff\phi)^\sigma(\tilde{a})$ are canceled. The remaining terms in $(\fff d_\h^0 \phi)^\sigma(\tilde{a})$ are only
	$$\sum_{\bar{m}'=(m'_k,\dots,m'_2,1)\in\Part{n}}\sum_{\xi'\in \Seq(\sigma,\bar{m}')}(-1)^{\bar{m}'+\xi'}c^{\sigma,\bar{m}',A_n}(d_\h^0\phi)^{U_0}(\xi')$$
	which are in turn canceled by all the terms in $(\delta_0\fff\phi)^\sigma(\tilde{a})$. We conclude that the equation (iii) holds.
	
	In the general case, we consider $\phi\in \cgs{p,n-1-p}$ for $p>0$. Applying the same arguments as above, we can prove the following equations hold true:  
	\begin{itemize}
		\item [(i')]  $-(-1)^{n-p}\fff d_\h^{n-p}\phi=(-1)^{n+1-p}\fff d_\s^{p+1}\phi+\sum_{i=2}^{n-p}\fff d_i\phi $;\\
		\item[(ii')] $ \fff d_\s^i\phi=\delta_{n-i}\fff \phi$ for $i=0,\dots,p$;\\
		\item[(iii'')]  $\fff d_\h^0\phi+\sum_{i=1}^{n-p-1}(-1)^i\fff d_\h^i\phi=\sum_{i=0}^{n-p-1}(-1)^i\delta_i\fff\phi$.
	\end{itemize}
	These equations yield 
	$$\sum_{i=0}^n(-1)^i \fff d_\h^i\phi+(-1)^n\fff (\sum_{i=0}^{p+1}d^i_\s)\phi+\sum_{i=2}^n\fff d_i\phi=\sum_{i=0}^n\delta_n\fff\phi,
	$$	
	which means $\fff(d\phi)=\delta(\fff\phi)$.
\end{proof}

\subsection{The cochain map $\GGG$}
In this section we define the cochain map $$\GGG:\ctil{\bullet} \lra \cgs{\bullet}.$$ Consider a $p$-simplex $\sigma = (u_1, \dots, u_p) \in \nnn_p(\uuu)$ as follows
 \begin{equation}\label{eqsigma3}
 \sigma = (\xymatrix{ {U_0} \ar[r]_-{u_1} & {U_{1}} \ar[r]_-{u_2} & {\dots} \ar[r]_-{u_{p-1}} & {U_{p-1}} \ar[r]_{u_p} & {U_p}})
 \end{equation}
and a $q$-simplex $a = (a_1, \dots, a_q) \in \nnn(\aaa(U_p))_q$ as follows
 \begin{equation}\label{equationa3}
     a=(\xymatrix{{A_0}\ar[r]^-{a_q}&{A_1} \ar[r]^-{a_{q-1}}&\cdots \ar[r]^-{a_2}&A_{q-1}\ar[r]^-{a_1}& A_q  }).
\end{equation}
Using conditioned shuffles, we will describe several ways to build a $(p+q)$-simplex in $\nnn_{p+q}(\tilde{\aaa})$ from these data. Let $\bar{m} = (m_k, \dots, m_1)$ be a partition of $p$ with $m_i \geq 1$ for all $i$ and let $\beta \in \bar{S}_{\bar{m}}$ be a conditioned $\bar{m}$-shuffle as defined in \S \ref{parshuffle}. For $1 \leq i \leq k$, let $r^i = (r^i_1, \dots, r^i_{m_i -1}) \in \ppp(\sigma[m_i])$ be a path and consider the associated $m_i$-simplex
\begin{align}\label{expath}
\bar{r}^i = (1_{\sigma[m_i]^{\ast}}, r^i_1, \dots, r^i_{m_i -1}) \in \nnn_{m_i}(\ccc_i).
\end{align}
where $$\ccc_i = \Fun(\aaa(U_{p - m_1 \dots - m_{i-1}}), \aaa(U_{p - m_1 \dots - m_i}).$$
First, consider the formal shuffle by $\beta$ of the associated tuples $({\bar{r}}^i)_i$ as described in \eqref{action1}. Assume that
$$\beta^{(0)}(({\bar{r}}^i)_i) = \underline{s} = (\underline{s}_1, \dots, \underline{s}_p).$$
{Since $\beta$ is a conditioned shuffle, there are uniquely determined numbers $\gamma_l \geq 1$, $1 \leq l \leq k$ such that $\underline{s}_1 = 1_{\sigma[m_1]^{\ast}}$, $\underline{s}_{\gamma_1 +1} = 1_{\sigma[m_2]^{\ast}}$, \dots, $\underline{s}_{\sum_{i = 1}^l \gamma_i + 1} = 1_{\sigma[l+1]^{\ast}}$, \dots, 
$\underline{s}_{\sum_{i = 1}^{k-1} \gamma_i + 1} = 1_{\sigma[m_k]^{\ast}}$ and $\gamma_k = p - \sum_{i = 1}^{k-1} \gamma_i$.
Following the pattern explained at the end of \S \ref{parshuffle}, we obtain the sequence 
\begin{equation*}
(\hat{c}^1, \dots, \hat{c}^k) \in \prod_{l = 1}^k \nnn_{\gamma_l}(\prod_{i = 1}^l \ccc_i).
\end{equation*}
 Using the composition of functors as in Remark \ref{shufflecompositionfunctor}, we obtain the following sequence which we define as the shuffle product of $(\bar{r}^i)_i$ by $\beta$  
\begin{equation*} 
\beta((\bar{r}^i)_i):=(\bar{c}^1, \dots, \bar{c}^k) \in \prod_{l = 1}^k \nnn_{\gamma_l}(\mathcal{D}_l)
\end{equation*}
where  $$\mathcal{D}_l = \Fun(\aaa(U_{p }), \aaa(U_{p - m_1 \dots - m_l}).$$
We denote by $\Seqq(\sigma,\bar{m})$ the set of all such conditioned shuffle products. Thus 
$$\Seqq(\sigma,\bar{m})=\{\beta((\bar{r}^i)_i)|\  \beta\in\bar{S}_{\bar{m}},\  \bar{r}^i=(1_{\sigma[m_i]^*},r^i),\ r^i\in\mathcal{P}(\sigma[m_i])\}.$$
For each $\zeta=\beta((\bar{r}^i)_i)\in\Seqq(\sigma,\bar{m})$, we denote the formal sequence $\beta^{(0)}((\bar{r}^i)_i)$ of $\zeta$ by $\underline{\zeta}$, and denote the set of all such formal sequence as
$$\underline{\Seqq}(\sigma,\bar{m})=\{\underline{\zeta}|\ \zeta\in\Seqq(\sigma,\bar{m})\}.$$
We define $$\mathrm{sign}(\zeta)=\mathrm{sign}(\beta((\bar{r}^i)_i))=(-1)^\beta\prod_{i=1}^k(-1)^{r^i}$$
and  equip this shuffle product with a certain underlying simplex denoted by $\simp(\beta((\bar{r}^i)_i))$. Writing $$\bar{c}^l=(\bar{c}^l_1,\dots,\bar{c}^l_{\gamma_l})$$ we define 
\begin{align*}
\simp(\bar{c}^l_1)&=(\xymatrix{U_{p - m_1 \dots - m_l} \ar[r]^-{||\sigma[m_l]||} & U_{p - m_1 \dots - m_{l-1}}});\\
\simp(\bar{c}^l_j)&=(\xymatrix{U_{p - m_1 \dots - m_l} \ar[r]^-{1} & U_{p - m_1 \dots - m_{l}}}),\ \ j>1.
\end{align*}
The simplex $\simp(\bar{c}^l)$ is obtained by concatenation of $(\simp(\bar{c}^l_j))_j$, the simplex $\simp(\beta((\bar{r}^i)_i))\equiv\simp(\bar{c}^1,\dots,\bar{c}^k)$ is obtained by concatenation of the simplices $(\simp(\bar{c}^l))_l$.

\begin{example}{\label{exshuffle4}}
		Consider the simplex $\sigma=(U_0\underset{u_1}{\lra}U_1\underset{u_2}{\lra}U_2\underset{u_3}{\lra}U_3\underset{u_1}{\lra}U_4)$ and the partition $\bar{m}=(m_2,m_1)=(2,2)$. 	 There are three conditioned formal shuffles $(1_{\sigma[m_1]^*}),c^{u_3,u_4},1_{\sigma[m_2]^*}, c^{u_1,u_2})$; $(1_{\sigma[m_1]^*}),1_{\sigma[m_2]^*},c^{u_3,u_4}, c^{u_1,u_2}); (1_{\sigma[m_1]^*}),1_{\sigma[m_2]^*},c^{u_1,u_2}, c^{u_3,u_4})  $. 
	 The set $\Seqq(\sigma,(m_2,m_1))$ consists of following sequences:
		$$\begin{pmatrix}
		\bullet&\overset{c^{u_1,u_2}u_3^*u_4^*}{\lra}&\bullet&\overset{1_{\sigma[m_2]^*}u_3^*u_4^*}{\lra}&\bullet&\overset{c^{u_3,u_4}}{\lra}&\bullet&\overset{1_{\sigma[m_1]^*}}{\lra}&\bullet\\
		U_0&\underset{1}{\lra}& U_0&\underset{u_2u_1}{\lra}& U_2&\underset{1}{\lra}& U_2 &\underset{u_4u_3}{\lra} &U_4
		\end{pmatrix}
		$$
		$$\begin{pmatrix}
		\bullet&\overset{c^{u_1,u_2}u_3^*u_4^*}{\lra}&\bullet&\overset{(u_2u_1)^*c^{u_3,u_4}}{\lra}&\bullet&\overset{1_{\sigma[m_2]^*}(u_4u_3)^*}{\lra}&\bullet&\overset{1_{\sigma[m_1]^*}}{\lra}&\bullet\\
		U_0&\underset{1}{\lra}& U_0 &\underset{1}{\lra}& U_0&\underset{u_2u_1}{\lra}&  U_2 &\underset{u_4u_3}{\lra} &U_4
		\end{pmatrix}
		$$
		$$\begin{pmatrix}
		\bullet&\overset{u_1^*u_2^*c^{u_3,u_4}}{\lra}&\bullet&\overset{c^{u_1,u_2}(u_4u_3)^*}{\lra}&\bullet&\overset{1_{\sigma[m_2]^*}(u_4u_3)^*}{\lra}&\bullet&\overset{1_{\sigma[m_1]^*}}{\lra}&\bullet\\
		U_0&\underset{1}{\lra}& U_0 &\underset{1}{\lra}& U_0&\underset{u_2u_1}{\lra}&  U_2 &\underset{u_4u_3}{\lra} &U_4
		\end{pmatrix}.
		$$	
		
\end{example}

Next consider a shuffle permutation $\omega \in S_{p,q}$. We are now to define the shuffle product of $a$ and $(\bar{c}^1, \dots, \bar{c}^k)$ by $\omega$ to be the element
$$(\hat{b}^0,\hat{b}^1,\dots,\hat{b}^k)\equiv a \underset{\omega}{\ast} (\bar{c}^1, \dots, \bar{c}^k)  \in \nnn_{p+q}(\tilde{\aaa}).$$ 
The formal shuffle product $\omega^{(0)}(a,\beta^{(0)}(({\bar{r}}^i)_i))$ is called the formal sequence of $(\hat{b}^0,\hat{b}^1,\dots,\hat{b}^k)$.
First consider the formal shuffle 
$$\omega^{(0)}(a;(\bar{c}^1, \dots, \bar{c}^k)) = (b_1, \dots, b_{p + q}).$$
Since $\omega$ is shuffle, there are unique numbers $t_1,\dots,t_{k+1}$ such that $b_{t_1+1}=\bar{c}^1_1$, $b_{t_1+t_2+1}=\bar{c}^2_1$ , $\dots$ , $b_{\sum_{i=1}^kt_i +1}=\bar{c}^k_1$ and $t_{k+1}=p+q-\sum_{i=1}^kt_i $. Following the procedure at the end of section 3.1, for $0\le l\le k$ consider 
$$a^l=(a^l_1,\dots,a^l_{j_l})=\{b_{\sum_{i=1}^lt_i+1},\dots,b_{\sum_{i=1}^{l+1}t_i}\}\cap\{a_1,\dots,a_q\}.$$ 
Obviously $a^0=(a_1,\dots,a_{t_1})$.
There is unique shuffle $\omega_l\in S_{j_l,\gamma_l}$ such that the formal shuffle product of $a^l$ and $\bar{c}^l$ by $\omega$ is exactly
$$(b_{\sum_{i=1}^lt_i+1},\dots,b_{\sum_{i=1}^{l+1}t_i}).$$ 
Now we put $\hat{b}^0=a^0$. For $l=1 \dots k$, take the shuffle product $\omega_l(a^l,\hat{c}^l)$ with respect to evaluation of functors as in Example \ref{exshuffle}, and put
$$\hat{b}^l=(\hat{b}^l_1,\dots,\hat{b}^l_{j_l+\gamma_l})=\omega_l(a^l,\hat{c}^l).$$

Now we associate the underlying simplex to $(\hat{b}^0,\hat{b}^1,\dots,\hat{b}^k)$ to show that 
$$(\hat{b}^0,\hat{b}^1,\dots,\hat{b}^k)\in \nnn_{p+q}(\tilde{\aaa})(\sigma^{\star}A_0, A_q).$$

We have $\hat{b}^l_1=\sigma[m_l]^*T_{l-1}(A_{\alpha_l})$ for a certain $T_{l-1}\in \mathcal{D}_{l-1}$ and a certain $A_{\alpha_l}\in\{A_0,\dots,A_q\}$.  Thus it can be regarded as an element of $\nnn_1(\tilde{\aaa})$ as follows:
$$\xymatrix{ \sigma[m_l]^{\ast}T_{l-1}A_{\alpha_l} \ar[r]^-{1} & T_{l-1}A_{\alpha_l} 
	\\
	U_{p - m_1 \dots - m_l} \ar[r]^-{||\sigma[m_l]||} & U_{p - m_1 \dots - m_{l-1}.}
 }$$
We consider $\hat{b}^l_j=\aaa_{U_{p - m_1 \dots - m_l}}(B,B')$ as an element of 
$\nnn_1(\tilde{\aaa})$ as follows:

$$\xymatrix{ B \ar[r]^-{\hat{b}^l_j} & B'
	\\
	U_{p - m_1 \dots - m_l} \ar[r]^-{1} & U_{p - m_1 \dots - m_{l}.}
}$$
 Put 
\begin{align*}
\simp(\hat{b}^l_1)&=(\xymatrix{U_{p - m_1 \dots - m_l} \ar[r]^-{||\sigma[m_l]||} & U_{p - m_1 \dots - m_{l-1}}}),\ \ \  l\ge 1;\\
\simp(\hat{b}^l_j)&=(\xymatrix{U_{p - m_1 \dots - m_l} \ar[r]^-{1} & U_{p - m_1 \dots - m_{l}}}),\ \ j>1;\\
\simp(\hat{b}^0_j)&=(\xymatrix{U_{p} \ar[r]^-{1} & U_{p}}),\ \ \ j\ge 0.
\end{align*}
By concatenation of all these 1-simplices we obtain 
the simplex $\simp(\hat{b}^0,\hat{b}^1,\dots,\hat{b}^k)$ of $(\hat{b}^0,\hat{b}^1,\dots,\hat{b}^k)$. 

\begin{example} Let $\sigma=(U_0\underset{u_1}{\lra}U_1\underset{u_2}{\lra}U_2)$ and $a\in\aaa(U_2)(A_0,A_1)$. Let $\bar{m}=(2)$, then $\Seqq(\sigma,(2))$  consist only of the sequence $(1_{(u_2u_1)^*},c^{u_1,u_2})$: 
$$\begin{pmatrix}
\bullet&\overset{c^{u_1,u_2}}{\lra}&\bullet&\overset{1_{(u_2u_1)^*}}{\lra}&\bullet\\
U_0&\underset{1}{\lra}& U_0&\underset{u_2u_1}{\lra}& U_2
\end{pmatrix}.
$$	
	 The following are shuffle products of ${a}$ and $(1_{u_2u_1},c^{u_1,u_2})$:
		$$\begin{pmatrix}
	u_1^*u_2^*A_0&\overset{c^{u_1,u_2,A_0}}{\lra}&(u_2u_1)^*A_0&\overset{1_{(u_2u_1)^*}(A_0)}{\lra}&A_0&\overset{a}{\lra}&A_1\\
	U_0&\underset{1}{\lra}& U_0&\underset{u_2u_1}{\lra}& U_2&\underset{1}{\lra}& U_2 
	\end{pmatrix}
	$$	
	$$\begin{pmatrix}
	u_1^*u_2^*A_0&\overset{c^{u_1,u_2,A_0}}{\lra}&(u_2u_1)^*A_0&\overset{(u_2u_1)^*a}{\lra}&(u_2u_1)^*A_1&\overset{1_{(u_2u_1)^*}(A_1)}{\lra}&A_1\\
	U_0&\underset{1}{\lra}& U_0&\underset{1}{\lra}& U_0&\underset{u_2u_1}{\lra}& U_2 
	\end{pmatrix}
	$$
	$$\begin{pmatrix}
	u_1^*u_2^*A_0&\overset{u_1^*u_2^*a}{\lra}&u_1^*u_2^*A_1&\overset{c^{u_1,u_2,A_1}}{\lra}&(u_2u_1)^*A_1&\overset{1_{(u_2u_1)^*}(A_1)}{\lra}&A_1\\
	U_0&\underset{1}{\lra}& U_0&\underset{1}{\lra}& U_0&\underset{u_2u_1}{\lra}& U_2 
	\end{pmatrix}.
	$$
\end{example}

For each cochain $\psi \in \CC^{p + q}_{\uuu}(\tilde{\aaa}, \tilde{M})$, we now define

$$\GGG(\psi)^\sigma(a)=\sum_{\substack{\bar{m}\in\Part{n}\\ \zeta\in\Seqq(\sigma,\bar{m})}}
\sum_{\beta\in S_{q,p}}(-1)^\beta(-1)^\zeta\psi^{\simp({a}\underset{\beta}{*}\zeta)}({a}\underset{\beta}{*}\zeta)$$

}

\begin{proposition}\label{Gcommutes}
	The map $\GGG$ commutes with differentials. Precisely, for $\psi\in\ctil{n-1}$, we have $dG(\psi)=G\delta(\psi)$.
\end{proposition}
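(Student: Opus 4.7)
The strategy is analogous to the proof of Proposition \ref{Fcommutes}, but run in reverse: both sides of the desired equality are large sums indexed by combinatorial data (partitions $\bar m$, conditioned shuffles $\beta$, paths $r^i$, shuffles $\omega$), and the plan is to expand everything and pair terms off until a finite family of residual terms can be cancelled via the flip operation introduced in \S \ref{parnewdif}. Fix $\sigma=(u_1,\dots,u_p)$, objects $A_0,\dots,A_q$, and $a=(a_1,\dots,a_q)$ with $p+q=n$. Expanding $d\GGG(\psi)^\sigma(a)=\sum_{j=0}^{p+1} d_j(\GGG_{p-j,q+j-1}\psi)^\sigma(a)$ and $\GGG\delta(\psi)^\sigma(a)=\sum_{i=0}^{n} (-1)^i \GGG(\delta_i\psi)^\sigma(a)$, each summand on the right unfolds into a sum of terms of the form $\pm \psi^{\simp(\eta)}(\eta')$, where $\eta'$ is obtained from some shuffle $a\underset{\omega}{\ast}\zeta$ by composing the adjacent pair in positions $(i,i+1)$ (or applying a boundary operation at the extremities).

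The bookkeeping then proceeds by classifying adjacent pairs $(x_i,x_{i+1})$ in the formal sequence $\omega^{(0)}(a,\underline{\zeta})$ according to their types. (i) A pair $(a_j,a_{j+1})$ of two bimodule entries produces precisely the contribution of $d_0=d_{\mathrm{Hoch}}$ applied to $\GGG(\psi)$, since composing inside $\tilde\aaa$ coincides with composing inside $\aaa(U_p)$ followed by inclusion. (ii) A pair where one entry is the designated identity $1_{\sigma[m_i]^\ast}$ corresponds to a simplicial degeneracy and matches, through the defining formulas for $c^{\sigma,\bar m}$ and $\epsilon^{\sigma,i}$, the components of $d_1=(-1)^n d_{\mathrm{simp}}$ acting on $\GGG(\psi)$: the outermost identities account for the $i=0$ and $i=p$ boundary terms (via the coherence condition \eqref{compatibility}), while the interior identities account for $1\le i\le p-1$. (iii) A pair $(r^i_l, a_j)$ or $(a_j, r^i_l)$ of mixed type can always be swapped by transposing the underlying shuffle $\omega$, yielding an equal-magnitude contribution with opposite sign, so all such terms cancel in pairs. (iv) A pair $(r^i_l, r^i_{l+1})$ of two consecutive entries from the same path is handled by the flip construction of \eqref{flipequation1}--\eqref{flipequation2}: $\mathrm{flip}(r^i,l)$ gives a second path yielding the same composed morphism with opposite sign. (v) A pair $(r^i_l, r^{i'}_{l'})$ from different paths in $\zeta$ is the key new phenomenon, and by Lemma \ref{jointpaths} it is exactly what is needed to produce the higher components $d_j$ ($j\ge 2$), because an adjacent coherence morphism $c^{\sigma,t}$ can be factored off the left of the remaining path data, which is the defining form of $d_j\GGG(\psi)$ via the twist $c^{\sigma,p,A_q}$ and a path in $R_j(\sigma)$.

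The main obstacle is case (v) together with the interface between cases (ii) and (v): one must verify that after pairing Hochschild-type cancellations (i), degeneracy matches (ii), and involutive cancellations (iii)--(iv), the remaining terms in $\GGG\delta(\psi)$ reassemble, coherently over all partitions $\bar m$, into exactly $\sum_{j\ge 2} d_j \GGG_{p-j,q+j-1}(\psi)$. Concretely, a term of $\delta_0 \psi$ applied to $a\underset{\omega}{\ast}\zeta$ that composes two leading path entries from $\bar r^{k}$ and $\bar r^{k-1}$ produces a coherence isomorphism $c^{\sigma[m_k]\sqcup\sigma[m_{k-1}],m_{k-1}}$; summing over all ways of collapsing adjacent blocks in this manner and invoking Lemma \ref{absolute} (so that $c^{\sigma,\bar m}$ depends only on $\bar m$ and not on the specific path chosen to represent it), one rewrites these terms as $d_j(\GGG(\psi))$ for the appropriate $j$. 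The sign bookkeeping is forced to work out, since the signs appearing on each side are products of $(-1)^{\bar m}, (-1)^{\beta}, (-1)^{r^i}$, and $(-1)^{\omega}$, which transform predictably under the operations (i)--(v).

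Finally, all terms in $\GGG\delta(\psi)$ involving either the extreme left face $\delta_0$ acting on an initial path entry or the extreme right face $\delta_n$ acting on a terminal $a_q$, as well as the normalizing conventions from \S \ref{parnormred}, must be accounted for separately; these correspond respectively to the cases $i=0$ in $d_1=d_\s$ (with the $c^{\sigma,1}$-twist) and $i=p$ in $d_1$ (with the $c^{\sigma,p-1}$-twist). Together with the cancellations described above, this gives $d\GGG(\psi)^\sigma(a)=\GGG\delta(\psi)^\sigma(a)$ for every $\sigma$ and $a$, as required. The essential computational obstacle throughout is to maintain an explicit bijective matching between terms that is compatible with the underlying simplex assignment $\simp(\cdot)$, so that one is genuinely comparing terms in the same piece $\CC^{p,q}(\aaa,M)$ on both sides.
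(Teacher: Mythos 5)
Your overall strategy (expand both sides over partitions, shuffles and paths, then match or cancel terms) is indeed the paper's strategy, but the specific term-matching you propose — which is the entire content of the proof — is wrong at the two crucial points. First, the higher components $d_j\GGG\psi$, $j\ge 2$, are \emph{not} produced by composing two adjacent path entries coming from different paths of $\zeta$: such cross-path pairs occur in both orders with opposite shuffle signs and equal composites, so they cancel among themselves and contribute nothing. In the actual proof the $d_j$-terms are matched by the face $\delta_0$ in the case where the leading formal entry of $a\underset{\beta}{*}\zeta$ is the block identity $1_{\sigma[m_1]^*}$ with $m_1=j\ge 2$; since this morphism of $\tilde\aaa$ is graded by $\|\sigma[m_1]\|$, the composition $\mu(\alpha_1,-)$ produces exactly the twist $c^{\sigma,p-m_1,A_q}$ appearing in the definition of $d_{m_1}$. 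Second, your case (ii) assigns the interior simplicial components $d_\s^i$, $1\le i\le p-1$, to pairs involving interior identities; this cannot work, because those components carry the factor $\epsilon^{\sigma,i,A_0}$ precomposed at the source, and the only terms on the $\GGG\delta\psi$ side producing such a factor are the $\delta_n$-face terms in which the \emph{last} entry of the shuffle is a terminal path element $r^{m_i}_{m_i-1}=\epsilon^{\sigma,i}(A_0)$ (the outer cases $d_\s^0$ and $d_\s^p$ are matched by $\delta_n$ with last entry $1_{u_1^*}$ and $\delta_0$ with first entry $1_{u_p}$, i.e.\ size-one blocks). Composing a block identity with a neighbour never yields an $\epsilon^{\sigma,i}(A_0)$.

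Because of these misassignments you also omit the genuinely delicate residual cancellation: after Hochschild matches, transposition cancellations and flips, there remain the $\delta_i$-terms in which an identity $1_{\sigma[m_t]^*}$ is adjacent to the first coherence entry $r^{m_t}_1=c^{\sigma[m_t],l}$ of \emph{its own} block. These are not reassembled into any $d_j$; they cancel internally on the $\GGG\delta\psi$ side against terms attached to the \emph{refined} partition obtained by splitting $m_t$ into $(\Delta',\Delta)=(l,m_t-l)$, using Lemma \ref{jointpaths} to split the path and a new conditioned shuffle, with opposite sign. (Your third paragraph instead invokes a block-merging mechanism with $c^{\sigma[m_k]\sqcup\sigma[m_{k-1}],m_{k-1}}$, which is the phenomenon occurring in the proof of Proposition \ref{Fcommutes}, not here.) Without the correct sources of $d_\s^i$ ($1\le i\le p-1$) and $d_j$ ($j\ge2$), and without the partition-refinement cancellation, the bookkeeping does not close, so the proof as proposed has a genuine gap.
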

\begin{proof}
	Assume that  $p+q=n$. Let  $\sigma=(u_1,\dots,u_p)$ be a $p$-simplex  as in (\ref{eqsigma3}) and $a=(a_1,\dots,a_q)$ as in (\ref{equationa3}). We prove $(dG(\psi))^\sigma(a)=(G\delta(\psi))^\sigma(a)$.\\ Put 
	\begin{eqnarray*}
	\mathrm{LHS}&=&(d_0\GGG\psi)^\sigma(a)+(-1)^n(d_\s\GGG\psi)^\sigma(a)+(d_2\GGG\psi)^\sigma(a))+\cdots+(d_p\GGG\psi)^\sigma(a);\\
	\mathrm{RHS}&=&(\GGG\delta_0\psi)^\sigma(a)-(\GGG\delta_1\psi)^\sigma(a)+\cdots+(-1)^n(\GGG\delta_n\psi)^\sigma(a).
	\end{eqnarray*}
We have 
		  $$(-1)^i(\GGG\delta_i\psi)^\sigma(a)=\sum_{\substack{\bar{m}\in\Part{p},\ \beta \in S_{q,p}\\ \zeta\in \Seqq(\sigma,\bar{m})  }}(-1)^i(-1)^\beta(-1)^\zeta(\delta_i\psi)^{\simp({a}\underset{\beta}{*}\zeta)}({a}\underset{\beta}{*}\zeta).$$
		  Denote 
		  $$T(i,\bar{m},\beta,\zeta)=(-1)^i(-1)^\beta(-1)^\zeta(\delta_i\psi)^{\simp({a}\underset{\beta}{*}\zeta)}({a}\underset{\beta}{*}\zeta).$$
		  	To prove that $\mathrm{LHS}=\mathrm{RHS}$, we show that  each term $T$ appearing in the expansion of $\mathrm{RHS}$ is either  matched with a unique term  in the expansion of $\mathrm{LHS}$ or canceled out with a term $-T$ in $\mathrm{RHS}$. Simultaneously, this process also shows that every term in
LHS is cancelled out.

	Take a partition $\bar{m}=(m_k,\dots,m_1)\in\Part{p}$. Fix $\beta\in S_{q,p}$ and $ \zeta\in \Seqq(\sigma,\bar{m})$. By definition, there are a unique $\gamma\in \bar{S}_{\bar{m}}$, $r^{m_i}=(r^{m_i}_1,\dots,r^{m_i}_{m_i-1})\in\ppp(\sigma[m_i])$, $i=1,\dots,k$; such that $\zeta$ is the shuffle product
	\begin{align}\label{equationzeta}
	\zeta=\gamma\big((\bar{r}^{m_i})_{i=1,\dots,k}\big)
	\end{align}
	where $\bar{r}^{m_i}=(1_{\sigma[m_i]},r^{m_i})$ as in (\ref{expath}).
	
	We denote the shuffle product
		$${a}\underset{\beta}{*}\zeta=\alpha=(\alpha_1,\dots,\alpha_n)$$
	and its formal sequence $$\underline{\alpha}=(\underline{\alpha}_1,\dots,\underline{\alpha}_n).$$

	\emph{Step 1.} We consider the term $T(0,\bar{m},\beta,\zeta)$ in $\mathrm{RHS}$. We have  
		$$(\delta_0\psi)^{\simp({a}\underset{\beta}{*}\zeta)}(a\underset{\beta}{*}\zeta)=\mu(\alpha_1,\psi^{\simp(\partial_0\alpha)}(\alpha_2,\dots,\alpha_n))$$
		where $\mu$ is the composition in the map-graded category $\tilde{\aaa}$.  
		  There are only three cases $\underline{\alpha}_1={a}_1$, $\underline{\alpha}_1=1_{u_p}$ or $\underline{\alpha}_1=1_{\sigma[m_1]^*}$ where $m_1\ge 2$.
	\begin{itemize}
		\item Consider the case $\underline{\alpha}_1={a}_1$. We have $\simp(\alpha_1)=(U_p\underset{1}{\lra} U_p)$. In the $\mathrm{LHS}$, we consider 
		\begin{eqnarray*}
		(d_\h^0\GGG\psi)^\sigma(a)&=&\sigma^*(a_1)(\GGG\psi)^\sigma(a_2,\dots,a_q)\\
		&=&\sum_{\substack{\bar{m}'\in\Part{p},\ \beta' \in S_{q-1,p}\\ \zeta'\in \Seqq(\sigma,\bar{m}')  }}
		(-1)^{\beta'}(-1)^{\zeta'}\sigma^*(a_1)\psi^{\simp({\partial_0a}\underset{\beta'}{*}\zeta')}({\partial_0a}\underset{\beta'}{*}\zeta').
		\end{eqnarray*}
		Choose $\bar{m}'=\bar{m}$ and $\zeta'=\zeta\in\Seqq(\sigma,\bar{m}')$. Then there exists  a unique  $\beta'\in S_{q-1,p}$ such that  $$({a}_2,\dots,{a}_q)\underset{\beta'}{*}\zeta'=(\alpha_2,\dots,\alpha_n).$$
		We have $(-1)^{\beta'}=(-1)^\beta$. Hence $$T(0,\bar{m},\beta,\zeta)=(-1)^{\beta'+\zeta'}\sigma^*(a_1)\psi^{\simp({\partial_0a}\underset{\beta'}{*}\zeta')}({\partial_0a}\underset{\beta'}{*}\zeta').$$		
		\item Consider the case $\underline{\alpha}_1=1_{u_p}$. We have $m_1=1$, 
		 $\simp(\alpha_1)=(U_{p-1}\underset{u_p}{\lra} U_p)$, and 
				 
		 $$(\delta_0\psi)^{\simp({a}\underset{\beta}{*}\zeta)}({a}\underset{\beta}{*}\zeta)=c^{\sigma,p-1,A_q}\psi^{\simp(\partial_0\alpha)}(\alpha_2,\dots,\alpha_n).$$
		 In the $\mathrm{LHS}$, we have 
		 \begin{eqnarray*}
		    &&(-1)^{n+p}(d_\s^p\GGG\psi)^\sigma(a)=(-1)^{n+p}c^{\sigma,p-1,A_q}(\GGG\psi)^{\partial_p\sigma}(u_p^*a)\\
		    &&=\sum_{\substack{\bar{m}'\in\Part{p-1},\ \beta' \in S_{q,p-1}\\ \zeta'\in \Seqq(\sigma,\bar{m}')  }}(-1)^{n+p+	\beta'+\zeta'}c^{\sigma,p-1,A_q}\psi^{\simp(u_p^*a\underset{\beta'}{*}\zeta')}(u_p^*a\underset{\beta'}{*}\zeta').
		    \end{eqnarray*}
		    Choose $\bar{m}'=(m_k,\dots,m_2)\in\Part{p-1}$. There exist unique $\zeta'\in\Seqq(\partial_p\sigma,\bar{m}')$ and $\beta'\in S_{q,p-1}$ such that  $${u_p^*a}\underset{\beta'}{*}\zeta'=(\alpha_2,\dots,\alpha_n).$$
		    Note that $(-1)^q(-1)^{\beta'}=(-1)^\beta $, so $(-1)^{n+p+	\beta'+\zeta'}=(-1)^\beta(-1)^\zeta$. This implies
		    $$T(0,\bar{m},\beta,\zeta)=(-1)^{n+p+	\beta'+\zeta'}c^{\sigma,p-1,A_q}\psi^{\simp(u_p^*a\underset{\beta'}{*}\zeta')}(u_p^*a\underset{\beta'}{*}\zeta').$$
		    
		    \item Consider the case $\underline{\alpha}_1=1_{\sigma[m_1]^*}$. We have $\simp(\alpha_1)=(U_{p-m_1}\underset{u_p\cdots u_{p-m_1+1}}{\lra}U_p)$, and 
		     $$(\delta_0\psi)^{\simp({a}\underset{\beta}{*}\zeta)}({a}\underset{\beta}{*}\zeta)=c^{\sigma,p-m_1,A_q}\psi^{\simp(\partial_0\alpha)}(\alpha_2,\dots,\alpha_n).$$
		  We have,  in $\mathrm{RHS}$, the terms
		    \begin{align*}
		    	(d_{m_1}\GGG\psi)^\sigma(a)&=\sum_{\substack{r\in\ppp(\sigma[m_1]),\ \beta'\in S_{q,m_1-1}}}(-1)^q(-1)^r(-1)^{\beta'}c^{\sigma,p-m_1,A_q}(\GGG\psi)^{L_{p-m_1}\sigma}(\beta'(a,r))\\
		    &=\sum_{\substack{r\in\ppp(\sigma[m_1]),\ \beta'\in S_{q,m_1-1}   ,\ \beta''\in S_{q+m_1-1,p-m_1}\\\bar{m}'\in\Part{p-m_1},\ \zeta'\in\Seqq(L_{p-m_1}\sigma,\bar{m}') }}(-1)^{q+r+\beta'+\beta''+\zeta'} \times\\
		    & \ \ \ c^{\sigma,p-m_1,A_q}\psi^{\simp(\beta'(a,r)\underset{\beta''}{*}\zeta')}(\beta'(a,r)\underset{\beta''}{*}\zeta').
		    \end{align*} 
		  
		    Let  $\bar{m}'=(m_k,\dots,m_2)\in\Part{p-m_1}$. We consider the element $\zeta'$ in $\Seqq(L_{p-m_1}\sigma,\bar{m}')$ of the form
		    $$\zeta'=\gamma_1(\bar{r}^{m_2},\dots,\bar{r}^{m_k})$$
		    where $\gamma_1\in\bar{S}_{\bar{m}'}$. Choose $r=r^{m_1}$, there exist unique $\gamma_1\in\bar{S}_{\bar{m}'},\ \beta'\in S_{q,m_1-1},\ \beta''\in S_{q+m_1-1,p-m_1}S$ such that 
		    $$\beta'(a,r)\underset{\beta''}{*}\zeta'=(\alpha_2,\dots,\alpha_n).$$
		    By inspection on signs, we get $(-1)^{q+r+\beta'+\beta''+\zeta'}=(-1)^\beta(-1)^\zeta$. Therefore
		    $$T(0,\bar{m},\beta,\zeta)=(-1)^{q+r+\beta'+\beta''+\zeta'}c^{\sigma,p-m_1,A_q}\psi^{\simp(\beta'(a,r)\underset{\beta''}{*}\zeta')}(\beta'(a,r)\underset{\beta''}{*}\zeta').$$
	\end{itemize}
	
	\emph{Step 2.} We consider the term $T(n,\bar{m},\beta,\zeta)$ in $\mathrm{RHS}$. We have  
		$$(-1)^n(\delta_n\psi)^{\simp({a}\underset{\beta}{*}\zeta)}({a}\underset{\beta}{*}\zeta)=(-1)^n\mu(\psi^{\simp(\partial_0\alpha)}(\alpha_1,\dots,\alpha_{n-1}),\alpha_n).$$
		  There are only three cases: $\underline{\alpha}_n={a}_n$, $\underline{\alpha}_n=1_{u_1^*}$ or $\underline{\alpha}_n=r^{m_i}_{m_i-1}$ where $r^{m_i}=(r^{m_i}_1,\dots,r^{m_i}_{m_i-1})\in\ppp(\sigma[m_i])$.

	\begin{itemize}

		\item Consider the case $\underline{\alpha}_n={a}_q$. Then $\simp(\alpha_n)=(U_0\underset{1}{\lra}U_0)$. In $\mathrm{LHS}$, we have 
		$$(-1)^q(d_\h^q\GGG\psi)^\sigma(a)=\sum_{\substack{ \bar{m}'\in\Part{p},\beta'\in\ S_{q-1,p}\\ 
		\zeta'\in\Seqq(\sigma,\bar{m}')		 }}(-1)^{q+\beta'+\zeta'}\psi^{\simp({\partial_q a}\underset{\beta'}{*}\zeta')}(\partial_q a\underset{\beta'}{*}\zeta')\sigma^{\star}(a_q).$$
	Choose $\bar{m}'=\bar{m}\in\Part{p}$ and $\zeta'=\zeta\in\Seqq(\sigma,\bar{m}')$. There exists  a unique $\beta'\in S_{q-1,p}$ such that $$({a}_1,\dots,{a}_{q-1})\underset{\beta'}{*}\zeta'=(\alpha_1,\dots,\alpha_{n-1}).$$
	Note that  $(-1)^p(-1)^{\beta'}=(-1)^\beta$, so $(-1)^{q+\beta'+\zeta'}=(-1)^{n+\beta+\zeta}$. This implies $$T(n,\bar{m},\beta,\zeta)=(-1)^{q+\beta'+\zeta'}\psi^{\simp({\partial_qa}\underset{\beta'}{*}\zeta')}({\partial_qa}\underset{\beta'}{*}\zeta')\sigma^{\star}(a_q).$$

	\item Consider the case $\underline{\alpha}_n=(1_{u_1^*})$.  Then $m_k=1$ and $\simp(\alpha_n)=(U_0\underset{1}{\lra}U_1)$, so $\underline{\zeta}=(\underline{\eta},1_{u_1^*})$ for some $\underline{\eta}\in\underline{\Seqq}(\partial_0\sigma,(m_{k-1},\dots,m_1))$. We have 
	$$(\delta_n\psi)^{\simp({a}\underset{\beta}{*}\zeta)}({a}\underset{\beta}{*}\zeta)=c^{\sigma,1,A_q}{ u_1^*}\psi^{\simp(\partial_n\alpha)}(\alpha_1,\dots,\alpha_{n-1}).$$
	
	In $\mathrm{LHS}$ we have
\begin{eqnarray*}
 (-1)^n(d_\s^0\GGG\psi)^\sigma(a)&=&(-1)^nc^{\sigma,1,A_q}M^{u_1}(\GGG\psi)^{\partial_0\sigma}(a)\\
 &=& \sum_{\substack{\bar{m}'\in\Part{p-1}, \beta'\in S_{q,p-1}\\ \zeta'\in\Seqq(\partial_0\sigma,\bar{m}') }}(-1)^{n+\beta'+\zeta'}c^{\sigma,1,A_q}M^{u_1}\psi^{\simp({a}\underset{\beta'}{*}\zeta')}({a}\underset{\beta'}{*}\zeta').
\end{eqnarray*}
 Take $\bar{m}'=(m_{k-1},\dots,m_1)\in\Part{p-1}$ and $\zeta'=\eta$, there exists a unique $\beta'\in S_{q,p-1}$ such that 
	$$({a}_1,\dots,{a}_{q})\underset{\beta'}{*}\zeta'=(\alpha_1,\dots,\alpha_{n-1}).$$
	We have $(-1)^{\beta'}=(-1)^\beta$, so $(-1)^{n+\beta'+\zeta'}=(-1)^{n+\beta+\zeta}$, hence $$T(n,\bar{m},\beta,\zeta)=(-1)^{n+\beta'+\zeta'}c^{\sigma,1,A_q}M^{u_1}\psi^{\simp({a}\underset{\beta'}{*}\zeta')}({a}\underset{\beta'}{*}\zeta').$$
	
	\item Consider the case $\underline{\alpha}_n=r^{m_i}_{m_i-1}$. Then $\alpha_n=\epsilon^{\sigma,j_0}(A_0)$ for some $j_0$,  $\simp(\alpha_n)=(U_0\underset{1}{\lra}U_0)$. We have 
	$$(\delta_n\psi)^{\simp({a}\underset{\beta}{*}\zeta)}({a}\underset{\beta}{*}\zeta)=\psi^{\simp(\partial_n\alpha)}(\alpha_1,\dots,\alpha_{n-1})\epsilon^{\sigma,j_0}(A_0).$$
	 In $\mathrm{LHS}$ we have $$(-1)^{n+j_0}(d_\s^{j_0}\GGG\psi)^\sigma(a)=\sum_{\substack{\bar{m}'\in\Part{p-1}, \beta'\in S_{q,p-1}\\ \zeta'\in\Seqq(\partial_{j_0}\sigma,\bar{m}') }}(-1)^{n+j_0+\beta'+\zeta'}\psi^{\simp({a}\underset{\beta'}{*}\zeta')}({a}\underset{\beta'}{*}\zeta')\epsilon^{\sigma,j_0}(A_0).$$
	 Take $\bar{m}'=(m_k,\dots,m_{i+1},m_i-1,m_{i-1},\dots,m_1)\in\Part{p-1}$. There exist unique $\zeta'\in \Seqq(\partial_{j_0}\sigma,\bar{m}')$ and $\beta\in S_{q,p-1}$ such that 
		$$({a}_1,\dots,{a}_{q})\underset{\beta'}{*}\zeta'=(\alpha_1,\dots,\alpha_{n-1}).$$
	Since $(-1)^{n+j_0+\beta'+\zeta'}=(-1)^{n+\beta+\zeta}$, we get
	$$T(n,\bar{m},\beta,\zeta)=(-1)^{n+j_0+\beta'+\zeta'}\psi^{\simp({a}\underset{\beta'}{*}\zeta')}({a}\underset{\beta'}{*}\zeta')\epsilon^{\sigma,j_0}(A_0).$$
		
	\end{itemize}

	\emph{Step 3.} Considering the term $T(i,\bar{m},\beta,\zeta)$ in $\mathrm{RHS}$  for $i=1..(n-1)$, we have
	    $$(\delta_i\psi)^{\simp({a}\underset{\beta}{*}\zeta)}({a}\underset{\beta}{*}\zeta)=\psi^{\simp(\partial_i\alpha)}(\alpha_1,\dots,\alpha_{i-1},\alpha_i\alpha_{i+1},\alpha_{i+2},\dots,\alpha_{n}).$$
	 Denote 
	 $$\Gamma=\{1_{u_i^*},1_{\sigma[m_j]^*},r^{m_t}_{l}|\ r^{m_t}=(r^{m_t}_{1},\dots,r^{m_t}_{m_t-1})\in\ppp(\sigma[m_t]),\ i,j,t,l\ge 1  \}.$$
	  We consider the following case
	 \begin{itemize}
	 	\item[(i)] Assume that $\{\underline{\alpha}_i,\underline{\alpha}_{i+1}\}\cap\{\tilde{a}_1,\dots,\tilde{a}_n\}\ne\emptyset$ then:
	 	\begin{itemize}
	 		\item[(a)] If $(\underline{\alpha}_i,\underline{\alpha}_{i+1})=(a_j,a_{j+1})$ for some $j$, we look at $d_\h^j\GGG\psi$ in $\mathrm{LHS}$,
	 		$$(-1)^j(d_\h^j\GGG\psi)^\sigma(a)=\sum_{\substack{ \bar{m}'\in\Part{p},\beta'\in\ S_{q-1,p}\\ 
	 				\zeta'\in\Seqq(\sigma,\bar{m}')		 }}(-1)^{j+\beta'+\zeta'}\psi^{\simp({\partial_ja}\underset{\beta'}{*}\zeta')}({\partial_ja}\underset{\beta'}{*}\zeta').$$
	 		Choose $\bar{m}'=\bar{m}$ and $\zeta'=\zeta$. There exists a unique $\beta'\in S_{q-1,p}$ such that $${\partial_ja}\underset{\beta'}{*}\zeta'=\partial_i\alpha.$$
	 		Since $(-1)^{j+\zeta'+\beta'}=(-1)^{i+\beta+\zeta}$, we get
	 		$$T(i,\bar{m},\beta,\zeta)=(-1)^{j+\beta'+\zeta'}\psi^{\simp({\partial_ja}\underset{\beta'}{*}\zeta')}({\partial_ja}\underset{\beta'}{*}\zeta').$$
	 		\item[(b)] If $\{\underline{\alpha}_i,\underline{\alpha}_{i+1}\}=\{a_j,b\}$ for some $b\in\Gamma$, there exists a unique $\beta'\in S_{q,p}$ such that 
	 		$$\beta'^{(0)}({a},\zeta)=(\underline{\alpha}_1,\dots,\underline{\alpha}_{i-1},\underline{\alpha}_{i+1},\underline{\alpha}_i,\underline{\alpha}_{i+2},\dots,\underline{\alpha}_n).$$
	 		Note that $(-1)^{\beta'}=-(-1)^\beta$. This implies $$T(i,\bar{m},\beta,\zeta)+T(i,\bar{m},\beta',\zeta)=0.$$
	 	\end{itemize}
	 	\item[(ii)] Assume that $\{\underline{\alpha}_i,\underline{\alpha}_{i+1}\}\subseteq \Gamma$, then :
	 	\begin{itemize}
	 		\item[(a)] If $\{\underline{\alpha}_i,\underline{\alpha}_{i+1}\}=\{r^{m_t}_{j},1_{u_l^*}  \}$, then we repeat the argument in (i'').
	 		\item[(b)] If $\{\underline{\alpha}_i,\underline{\alpha}_{i+1}\}=\{r^{m_t}_{j},r^{m_s}_{l}  \}$, then if $s\ne t$ we repeat the argument in (i''). Else $s=t$ so $(\underline{\alpha}_i,\underline{\alpha}_{i+1})=(r^{m_t}_{j},r^{m_t}_{l})$, then $l=j+1$. 
	 	   In the formula (\ref{equationzeta}) of $\zeta$, we keep $\bar{r}^{m_{t'}}$ when $t'\ne t$ and replace $\bar{r}^{m_t}=(1_{\sigma[m_t]^*},r^{m_t})$ by $(1_{\sigma[m_t]^*},\mathrm{flip}(r^{m_t},j))$ to obtain the new element $\eta\in \Seqq(\sigma,\bar{m})$.
	 	
	 		Then $(-1)^\eta=-(-1)^\zeta$ and by (\ref{flipequation1}) we get $$\partial_i({a}\underset{\beta}{*}\eta)=\partial_i\alpha.$$
	 	This implies $$T(i,\bar{m},\beta,\eta)+T(i,\bar{m},\beta,\zeta)=0.$$
	 		
	 		\item[(c)] If $\{\underline{\alpha}_i,\underline{\alpha}_{i+1}\}=\{1_{\sigma[m_j]},r^{m_t}_{v} \}$ where $r^{m_j}=(r^{m_j}_{1},\dots,r^{m_j}_{m_j-1})\in\ppp(\sigma[m_j])$, then if $j\ne t$ we again repeat the argument in (i''). If $j=t$ then $(\underline{\alpha}_i,\underline{\alpha}_{i+1})=(1_{\sigma[m_j]},r^{m_j}_{1} )$ . Assume that $r^ {m_j}_{1}=c^{\sigma[m_j],l}$ for some $l$. Let 
	 			$$\Delta=m_j-l, \Delta'=l.$$
	 	 We decompose $\sigma[m_j]=\sigma[\Delta']\sqcup \sigma[\Delta]$ as concatenation of $\sigma[\Delta']$ and $\sigma[\Delta']$. By Lemma (\ref{jointpaths}) there exist paths $r^\Delta\in\ppp(\sigma[\Delta]), r^{\Delta'}\in\ppp(\sigma[\Delta'])$ and $\beta_0\in S_{\Delta-1,\Delta'-1}$ such that $$(r^{m_j}_{1},r^\Delta\underset{\beta_0}{*}r^{\Delta'})=r^{m_j}.$$ 
	 		Choose the new partition $\bar{m}'=(m_k,\dots,m_{j+1},\Delta',\Delta,m_{j-1},\dots,m_1)\in\Part{p}$. 
	 		 There exists a unique conditioned shuffle permutation $\gamma\in\bar{S}_{\bar{m}}$ such that\\ $\gamma^{(0)}(\bar{r}^{m_1},\dots,\bar{r}^{m_i-1},\bar{r}^{\Delta},\bar{r}^{\Delta'},\bar{r}^{m_{i+1}},\dots,\bar{r}^{m_k} )\\=(\underline{\alpha}_1,\dots,\underline{\alpha}_{i-1},1_{\sigma[\Delta]},1_{\sigma[\Delta']},\underline{\alpha}_{i+1},\dots,\underline{\alpha}_n).$\\
	 		Let $\eta=\gamma^{(0)}(\bar{r}^{m_1},\dots,\bar{r}^{m_i-1},\bar{r}^{\Delta},\bar{r}^{\Delta'},\bar{r}^{m_{i+1}},\dots,\bar{r}^{m_k} )\in \Seqq(\sigma,\bar{m}')$.	 	
	 		Since $(-1)^\eta=-(-1)^\zeta$, and 
	 		$$\partial_i({a}\underset{\beta}{*}\eta)=\partial_i\alpha,$$
	 		we get $$T(i,\bar{m},\beta,\zeta)+T(i,\bar{m}',\beta,\eta)=0.$$
	 		
	 		\end{itemize}
	 \end{itemize}

\end{proof}

\subsection{$\fff$ and $\GGG$ are quasi-inverse}
We construct homotopy maps $$\{T_{n+1}:\ctil{n+1}\lra\ctil{n}\}$$ to show that $\fff\GGG\sim 1$, then we prove directly that $\GGG\fff(\phi)=\phi$ for any normalized reduced cochain $\phi$. Hence we conclude that both $\fff$ and $\GGG$ are quasi-isomorphisms, in particular, we have
 $$HH^n_{\mathrm{GS}}(\aaa, M) = H^n\cgs{\bullet}\cong H^n(\ctil{\bullet}) = HH^n_{\uuu}(\tilde{\aaa}, \tilde{M}).$$

For each $n$-simplex $\sigma=(u_1,\dots,u_n)$ as in (\ref{eqsigma0}), let $A=(A_i)_{i=0}^n$ where $A_i\in\tilde{\aaa}(U_i)$. Denote
$$\tilde{\aaa}_{\sigma,A}=\tilde{\aaa}_{u_n}(A_{n-1},A_n)\otimes\dots\otimes \tilde{\aaa}_{u_1}(A_0,A_1).$$
Let
$$\Lambda=\{ x\in \tilde{\aaa}_{\sigma,A}|\ \sigma \in\nnn(\uuu),\ A_i\in\aaa(\sigma(i))  \}.$$
Denote by $\langle\Lambda\rangle$ the free abelian group generated by $\Lambda$. Given $\Psi\in\ctil{n}$ and $x=\sum_{\sigma,A} x_{\sigma,A}\in\langle\Lambda\rangle$ where $x_{\sigma,A}\in\tilde{\aaa}_{\sigma,A}$, then we set 
$$\Psi(x)=\sum_{\sigma,A}\Psi(x_{\sigma,A})$$
in which $\Psi(x_{\sigma,A})=0$ if $\sigma\notin \nnn_n(\uuu)$. \\

Let $\sigma=(u_1,\dots,u_n)$ and $\gamma=(v_1,\dots,v_m)$ be simplices as in (\ref{eqsigma0}). Let $A=(A_i)_{i=0}^n$ where $A_i\in\tilde{\aaa}(U_i)$ and $B=(B_i)_{i=0}^m$ where $B_i\in \tilde{\aaa}(V_i)$. 
Given $\tilde{a}=(\tilde{a}_1,\dots,\tilde{a}_n)\in\tilde{\aaa}_{\sigma,A}$ and $\tilde{b}=(\tilde{b}_1,\dots,\tilde{b}_m)\in\tilde{\aaa}_{\gamma,B}$ as in $(\ref{tildea})$, we have $$\simp(\tilde{a})=\sigma;\ \simp(\tilde{b})=\gamma.$$
Assume $A_n=B_0$ and $U_n=V_0$, we define the concatenation
\begin{align*}
	\tilde{b}\sqcup\tilde{a}&=(\tilde{b}_1,\dots,\tilde{b}_m,\tilde{a}_1,\dots,\tilde{a}_n);\\
	\simp(\tilde{b}\sqcup\tilde{a})&=\simp(\tilde{b})\sqcup \simp(\tilde{a})=\gamma\sqcup\sigma .
\end{align*}
We have $\simp(\tilde{a}_i)=(U_{n-i}\underset{u_{n-i+1}}{\lra}U_{n-i+1})$, and so $$\simp(\tilde{a})=\sigma=\simp(\tilde{a}_1)\sqcup\cdots\sqcup \simp(\tilde{a}_n).$$

We use the following notations
\begin{eqnarray*}
	\partial_0(\tilde{a})&=&(\tilde{a}_2,\dots,\tilde{a}_n),\ \mathrm{simp}(\partial_0(\tilde{a}))=\partial_n\sigma;\\
	\partial_i(\tilde{a})&=&(\tilde{a}_1,\dots,\tilde{a}_{i-1},\mu(\tilde{a_i},\tilde{a}_{i+1}),\tilde{a}_{i+2},\dots,\tilde{a}_n),\  \mathrm{simp}(\partial_i(\tilde{a}))=\partial_{n-i}\sigma;\\
	\partial_n(\tilde{a})&=&(\tilde{a}_{1},\dots,\tilde{a}_{n-1}),\ \mathrm{simp}(\partial_n(\tilde{a}))=\partial_0\sigma;\\
	R_p\tilde{a}&=&(\tilde{a}_1,\dots,\tilde{a}_{n-p});\\
	\bar{a}_{n+1-p,\dots,n}&=&\tilde{a}_{n+1-p,\dots,n},\ \mathrm{simp}(\bar{a}_{n+1-p,\dots,n})=(U_0\overset{1}{\lra}U_0).
\end{eqnarray*}
In the abelian group $\langle\Lambda\rangle$, we put 
\begin{eqnarray*}
	\omega_{n,p}(\sigma,\tilde{a})&=&\sum_{\substack{\bar{m}\in\Part{n-p}\\\xi\in\Seq(R_p\sigma, R_p\tilde{a},\bar{m}) }}\sum_{\substack{\bar{m}'\in\Part{p},\beta\in S_{n-p,p}\\ \zeta\in \Seqq(L_p\sigma,\bar{m}') }}(-1)^{\bar{m}+\xi+\zeta+\beta}\xi\underset{\beta}{*}\zeta\sqcup \bar{a}_{n+1-p,\dots, n}\ ;\\
	\omega_n(\sigma,\tilde{a})&=&\sum_{p=1}^n\omega_{n,p}(\sigma,\tilde{a});\\
	\Delta_n(\sigma,\tilde{a})&=&\sum_{\substack{\bar{m}\in\Part{n}\\ \xi\in\Seq(\sigma,\bar{m}) }}(-1)^{\bar{m}+\xi}\xi-(\tilde{a}_1,\dots,\tilde{a}_n).
\end{eqnarray*}
By induction, we set $$\Omega_n(\sigma,\tilde{a})=(-1)^{n+1}\omega_n(\sigma,\tilde{a})+\Omega_{n-1}(\partial_0\sigma,\partial_n\tilde{a})\sqcup\tilde{a}_n,\ \mathrm{for } \ n\ge 2,$$
if $n=1$ then $(\sigma,\tilde{a})$ is represented as $$(\sigma,\tilde{a})=\begin{pmatrix}
A_0\overset{\tilde{a}}{\lra} A_1\\ 
U_0\underset{u_1}{\lra}U_1
\end{pmatrix}$$
and we set $$\Omega_1(\sigma,\tilde{a}) = \begin{pmatrix}
A_0\overset{\tilde{a}}{\lra} u_1^*A_1\overset{1_{u_1^*A_1}}{\lra} A_1\\ 
U_0\underset{1_{U_0}}{\lra}U_0\underset{u_1}{\lra}U_1
\end{pmatrix}.$$\\

Now we define the homotopy maps $\{T_{n+1}:\ctil{n+1}\lra\ctil{n}\}$ as follows:
\begin{align*}
&T_1=0 ,\\ &(T_{n+1}\Psi)^\sigma(\tilde{a})=\Psi(\Omega_n(\sigma,\tilde{a})),\ n\ge1.
\end{align*}
From now on, for simplicity we write $\Omega_n(\tilde{a})$ and $\omega_n(\tilde{a})$ instead of $\Omega_n(\sigma,\tilde{a})$ and $\omega_n(\sigma,\tilde{a})$.

\begin{theorem}\label{homotopy}
	Let $\Psi$ be a cochain in $\ctil{n}$, then we have
	\begin{equation}\label{homoeqn}
	\fff\GGG(\Psi)-\Psi=\delta T_n\Psi+T_{n+1}\delta\Psi
	\end{equation}
\end{theorem}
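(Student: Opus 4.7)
The plan is to verify the homotopy identity \eqref{homoeqn} by evaluating both sides at an arbitrary pair $(\sigma,\tilde{a})$ with $\sigma=(u_1,\dots,u_n)$ and $\tilde{a}=(\tilde{a}_1,\dots,\tilde{a}_n)\in\tilde{\aaa}_{\sigma,A}$, and matching the resulting expressions term-by-term. The right-hand side $\delta T_n\Psi + T_{n+1}\delta\Psi$, by definition of $T$, equals
$$\sum_{i=0}^{n}(-1)^i\delta_i\bigl(\Psi(\Omega_{n-1}(\partial_i(\sigma,\tilde a)))\bigr)+\Psi\!\left(\delta^\vee\Omega_n(\tilde a)\right),$$
where $\delta^\vee$ denotes the formal action dual to the Hochschild differential on $\ctil{\bullet}$. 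Using the recursive definition $\Omega_n(\tilde a)=(-1)^{n+1}\omega_n(\tilde a)+\Omega_{n-1}(\partial_n\tilde a)\sqcup \tilde a_n$, the second summand splits naturally into a ``new'' piece coming from $\omega_n(\tilde a)$ and an ``old'' piece coming from the concatenation with $\tilde a_n$. The left-hand side is $\fff\GGG(\Psi)^\sigma(\tilde a)-\Psi^\sigma(\tilde a)$, which after expanding $\fff$ and $\GGG$ is indexed by a partition $\bar m$ of some $n-p$, a sequence $\xi\in\Seq(R_p\sigma,\bar m)$, a partition $\bar m'$ of $p$, a conditioned shuffle product $\zeta\in\Seqq(L_p\sigma,\bar m')$ and a shuffle $\beta\in S_{n-p,p}$ --- which is precisely the combinatorial datum packaged by $\omega_{n,p}(\sigma,\tilde a)$.

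I would proceed by induction on $n$. The base case $n=1$ is direct: $T_1=0$ and $\Omega_1(\sigma,\tilde a)$ is the explicit $2$-simplex displayed above, so $T_2\delta\Psi$ is a single line that matches $\fff\GGG(\Psi)-\Psi$ after applying coherence. For the inductive step, I would separate off $\tilde a_n$ from both sides. The concatenated piece $\Omega_{n-1}(\partial_0\sigma,\partial_n\tilde a)\sqcup \tilde a_n$ inside $\Omega_n(\tilde a)$ feeds through $\delta\Psi$ to produce two kinds of contributions: terms of the form $\mu(\Psi(\Omega_{n-1}(\partial_n\tilde a)),\tilde a_n)$ (the $\delta_{n+1}$-piece) which combine with the $\delta_n\bigl(\Psi(\Omega_{n-1}(\partial_n\tilde a))\bigr)$ summand of $\delta T_n\Psi$, and interior boundary terms that, by induction hypothesis, realise $\fff\GGG(\Psi)^{\partial_0\sigma}(\partial_n\tilde a)\sqcup\tilde a_n - \Psi^{\partial_0\sigma}(\partial_n\tilde a)\sqcup\tilde a_n$, i.e. exactly the contribution of $\fff\GGG(\Psi)^\sigma(\tilde a)-\Psi^\sigma(\tilde a)$ coming from sequences whose last entry is $\tilde a_n$.

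After this inductive cancellation, only the new piece $(-1)^{n+1}\Psi(\omega_n(\tilde a))$ and the contribution of the face maps $\delta_1,\dots,\delta_n$ acting on $\omega_n(\tilde a)$ (together with the $i=0,\dots,n-1$ summands of $\delta T_n\Psi$ applied to $\partial_i(\sigma,\tilde a)$) remain on the right-hand side, to be matched against those terms of $\fff\GGG(\Psi)^\sigma(\tilde a)-\Psi^\sigma(\tilde a)$ where $\tilde a_n$ is not the final entry of the shuffled sequence. This matching is carried out exactly as in the proofs of Propositions \ref{Fcommutes} and \ref{Gcommutes}: each internal face $\delta_i$ of $\omega_n$ either cancels its partner obtained by an adjacent transposition of neighbouring entries $(\underline\alpha_i,\underline\alpha_{i+1})$ of mixed type (changing $\beta$ to $(i\,i{+}1)\circ\beta$ flips the sign by \eqref{flipequation2}), or cancels via a $\mathrm{flip}(r,k)$ on a path using \eqref{flipequation1}, or regroups via the coherence relation \eqref{compatibility} into a contribution with a different partition $\bar m$ or $\bar m'$. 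The outer faces $\delta_0$ and $\delta_{n+1}$ produce the leftmost and rightmost twist factors $c^{\sigma,p,A_n}$ and $c^{\sigma,1,A_q}u_1^\ast(\cdot)$ expected from $\fff\GGG(\Psi)$.

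The main obstacle will be exactly this bookkeeping: organising the quadruple sum over $(\bar m,\bar m',\beta,\zeta)$ in $\omega_{n,p}$ so that the four cancellation patterns (mixed-type adjacent swap, same-type $\mathrm{flip}(r,k)$, partition-refinement via \eqref{compatibility}, and outer-face coherence) partition \emph{all} terms produced by $\delta^\vee\Omega_n(\tilde a)$ together with $\sum_{i<n}(-1)^i\delta_i T_n\Psi$ into pairs or singletons, each singleton being one of the summands of $\fff\GGG(\Psi)^\sigma(\tilde a)-\Psi^\sigma(\tilde a)$. The argument is essentially parallel to the sign-tracking in the proofs of $\fff$ and $\GGG$ being cochain maps, but is substantially longer because both face maps and homotopy recursion contribute, so it will require a careful tabulation --- organised by the type of the pair $(\underline\alpha_i,\underline\alpha_{i+1})$ in the formal sequence of $\omega_n(\tilde a)$ --- analogous to the Cases $(i)$--$(ii)$ in Step~3 of Proposition~\ref{Gcommutes}.
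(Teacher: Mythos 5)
Your skeleton does match the paper's: peel off the outer faces using the recursion $\Omega_n(\tilde a)=(-1)^{n+1}\omega_n(\tilde a)+\Omega_{n-1}(\partial_0\sigma,\partial_n\tilde a)\sqcup\tilde a_n$, note that the $p\ge 1$ part of $\fff\GGG(\Psi)^\sigma(\tilde a)$ is exactly $\delta_{n+1}\Psi(\omega_n(\sigma,\tilde a))$ and is absorbed by $(-1)^{n+1}T_{n+1}\delta_{n+1}\Psi+(-1)^n\delta_nT_n\Psi$, then cancel the rest by the patterns of Propositions \ref{Fcommutes} and \ref{Gcommutes}. But your inductive step has a genuine gap. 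The induction hypothesis you invoke is the theorem in degree $n-1$, i.e.\ a statement about cochains $\Psi'\in\ctil{n-1}$, whereas the quantities you want it to control, namely $\Psi(\Omega_{n-1}(\partial_0\sigma,\partial_n\tilde a)\sqcup\tilde a_n)$ and its boundary terms, are partial evaluations of the degree-$n$ cochain $\Psi$: the expression $\fff\GGG(\Psi)^{\partial_0\sigma}(\partial_n\tilde a)$ is not defined, and $\Psi(-\sqcup\tilde a_n)$ is not an element of $\ctil{n-1}$ (it has the wrong target, and $\fff,\GGG$ do not commute with this partial evaluation because of the twist data). To make the induction run one must strengthen the statement to a $\Psi$-free identity between formal chains in $\langle\Lambda\rangle$, namely $\sum_{i=0}^n(-1)^i\partial_i\Omega_n(\tilde a)+\sum_{i=0}^{n-1}(-1)^i\Omega_{n-1}(\partial_i\tilde a)=\Delta_n(\tilde a)$; this is exactly the paper's Lemma \ref{lemmahomoeqn1}, whose inductive step is isolated as Lemma \ref{homomain1}, and $\Psi$ is applied only afterwards (with the faces $\delta_0$ and $\delta_{n+1}$, which involve outer $\mu$-compositions and are not of the form $\Psi(\partial_i(-))$, treated separately --- your formula $\Psi(\delta^\vee\Omega_n(\tilde a))$ glosses over this for $\delta_0$ as well).

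Second, the bookkeeping you propose does not close up as described. The inductively produced piece (in chain terms $\Delta_{n-1}(\partial_n\tilde a)\sqcup\tilde a_n$) is \emph{not} the contribution of $\fff\GGG(\Psi)^\sigma(\tilde a)-\Psi^\sigma(\tilde a)$ coming from sequences whose last entry is $\tilde a_n$: the elements of $\Seq(\sigma,\bar m)$ ending in $\tilde a_n$ have the form $(u_1^*\xi,\tilde a_n)$, so their remaining entries are restricted along $u_1$ and lie over the constant simplex at $U_0$, whereas the entries of $\xi\sqcup\tilde a_n$ for $\xi\in\Seq(\partial_0\sigma,\bar m)$ carry no $u_1^*$ and lie over $U_1$. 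Hence the concatenated inductive piece is not a sub-sum of the target at all; in the paper's cancellation it is the auxiliary term $E_1$ of Lemma \ref{homomain1}, which must be cancelled against $\partial_{n+1}$-faces of the $p=1$ part of $\omega_n$ (Step 3, the case $\underline b_{n+1}=1_{u_1^*}$), while the genuine terms of $\Delta_n(\tilde a)$ ending in $\tilde a_n$ arise from $\partial_0$-faces of $\omega_{n,p}$ (Step 1(iii) with $m_1'=p$). So your proposed partition "inductive piece $=$ terms ending in $\tilde a_n$, $\omega_n$-boundaries $=$ the rest" must be replaced by the cross-cancellations that Lemma \ref{homomain1} carries out; this is precisely the extra work your plan omits.
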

\begin{proof}
	We have
	\begin{align*}
&	(\fff\GGG\Psi)^\sigma(\tilde{a})=(\fff_0\GGG\Psi)^\sigma(\tilde{a})+\sum_{p=1}^n(\fff_p\GGG\Psi)^\sigma(\tilde{a})\\
	& =(\fff_0\GGG\Psi)^\sigma(\tilde{a})+\sum_{\substack{\bar{m}\in\Part{n-p}\\\xi\in\Seq(R_p\sigma, R_p\tilde{a},\bar{m}) }}\sum_{\substack{\bar{m}'\in\Part{p},\beta\in S_{n-p,p}\\ \zeta\in \Seqq(L_p\sigma,\bar{m}') }}(-1)^{\xi+\zeta+\beta}\Psi(\xi\underset{\beta}{*}\zeta)\tilde{a}_{n+1-p,\dots, n}\\
	&=(\fff_0\GGG\Psi)^\sigma(\tilde{a})+\delta_{n+1}\Psi(\omega_n(\sigma,\tilde{a})).
	\end{align*}
	
	Moreover, we have
	\begin{align*}
	(-1)^{n+1}(T_{n+1}\delta_{n+1}\Psi)^\sigma(\tilde{a}) &=\delta_{n+1}\Psi(\omega_n(\sigma,\tilde{a}))+(-1)^{n+1}\delta_{n+1}\Psi(\Omega_{n-1}(\partial_{0}\sigma,\partial_n\tilde{a})\sqcup\tilde{a}_n)\\
	&=\delta_{n+1}\Psi(\omega_n(\sigma,\tilde{a}))+(-1)^{n+1}\Psi(\Omega_{n-1}(\partial_{0}\sigma,\partial_n\tilde{a}))\tilde{a}_n 
	\end{align*}
	and 
	$$(-1)^n(\delta_nT_n\Psi)^\sigma(\tilde{a})=(-1)^n(T_n\Psi)^{\partial_0\sigma}(\partial_n\tilde{a})\tilde{a}_n=(-1)^n\Psi(\Omega_{n-1}(\partial_{0}\sigma,\partial_n\tilde{a}))\tilde{a}_n .$$
	This implies $$(-1)^{n+1}(T_{n+1}\delta_{n+1}\Psi)^\sigma(\tilde{a})+(-1)^n(\delta_nT_n\Psi)^\sigma(\tilde{a})=\delta_{n+1}\Psi(\omega_n(\sigma,\tilde{a})).$$
	So the equation (\ref{homoeqn}) is equivalent to the equation 
	\begin{eqnarray}
	(\fff_0\GGG\Psi)^\sigma(\tilde{a})-\Psi^\sigma(\tilde{a})&=&\sum_{i=0}^n(-1)^i(T_{n+1}\delta_i\Psi)^\sigma(\tilde{a})+\sum_{i=0}^{n-1}(-1)^i(\delta_iT_n\Psi)^\sigma(\tilde{a})\nonumber.
	\end{eqnarray}
	This equation holds true due to Lemma \ref{lemmahomoeqn1}. 
\end{proof}

\begin{lemma}\label{lemmahomoeqn1}
	Let $\sigma=(u_1,\dots,u_{n})$ be a simplex and  $\tilde{a}=(\tilde{a}_1,\dots,\tilde{a}_n)$ as in (\ref{tildea}), then we have
	\begin{eqnarray}\label{homoeqn1}
	\sum_{i=0}^n(-1)^i\partial_i\Omega_n(\tilde{a}_1,\dots,\tilde{a}_n)+\sum_{i=0}^{n-1}(-1)^i\Omega_{n-1}(\partial_i(\tilde{a}_1,\dots,\tilde{a}_n))=\Delta_n(\tilde{a}).
	\end{eqnarray}
\end{lemma}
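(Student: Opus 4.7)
The plan is to prove the lemma by induction on $n$, exploiting the recursive definition
\[
\Omega_n(\sigma,\tilde{a})=(-1)^{n+1}\omega_n(\sigma,\tilde{a})+\Omega_{n-1}(\partial_0\sigma,\partial_n\tilde{a})\sqcup\tilde{a}_n.
\]
The base case $n=1$ is direct: $\Omega_1(\sigma,\tilde{a})=(\tilde{a},1_{u_1^{\ast}A_1})$, the set $\ppp(\sigma)$ is trivial, so $\Delta_1(\tilde{a})=(\tilde{a})-(\tilde{a})=0$, and the two non-trivial face maps of $\Omega_1$ cancel once one uses the coherence $c^{u_1,1_{U_0}}=1$ (with the convention that $\Omega_0$ on the empty tuple vanishes).

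For the inductive step I split the left-hand side according to the recursive decomposition. The summand $\Omega_{n-1}(\partial_n\tilde{a})\sqcup\tilde{a}_n$ has $\tilde{a}_n$ as its last entry, so the faces $\partial_i$ for $0\le i\le n-1$ factor through $\Omega_{n-1}$, while $\partial_n$ merges the last morphism of $\Omega_{n-1}(\partial_n\tilde{a})$ with $\tilde{a}_n$. Applying the induction hypothesis to $\partial_n\tilde{a}=(\tilde{a}_1,\dots,\tilde{a}_{n-1})$ and invoking the simplicial identities $\partial_i\partial_n=\partial_{n-1}\partial_i$ for $i<n$, the nested $\Omega_{n-2}$ terms which arise cancel the $\sum_{i=0}^{n-2}(-1)^i\Omega_{n-1}(\partial_i\tilde{a})$ contributions coming from the second sum in the statement. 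Crucially, the residue $\Delta_{n-1}(\partial_n\tilde{a})\sqcup\tilde{a}_n$ supplies the $-(\tilde{a}_1,\dots,\tilde{a}_n)$ term of $\Delta_n(\tilde{a})$, since $-(\tilde{a}_1,\dots,\tilde{a}_{n-1})\sqcup\tilde{a}_n=-(\tilde{a}_1,\dots,\tilde{a}_n)$. This reduces the problem to showing that the remaining residues, together with $(-1)^{n+1}\sum_{i=0}^n(-1)^i\partial_i\omega_n$, assemble into $\sum_{\bar{m}\in\Part{n},\ \xi\in\Seq(\sigma,\bar{m})}(-1)^{\bar{m}+\xi}\xi$.

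The core combinatorial analysis is on $\sum_{i=0}^n(-1)^i\partial_i\omega_n$. Expanding each $\omega_{n,p}$ over partitions $\bar{m}\in\Part{n-p}$ and $\bar{m}'\in\Part{p}$, sequences $\xi\in\Seq(R_p\sigma,\bar{m})$, conditioned shuffles $\zeta\in\Seqq(L_p\sigma,\bar{m}')$, and shuffles $\beta\in S_{n-p,p}$, the action of $\partial_i$ is controlled by the adjacent pair $(\underline{\alpha}_i,\underline{\alpha}_{i+1})$ in the formal sequence $\beta^{(0)}(\underline{\xi},\underline{\zeta})\sqcup\bar{a}_{n+1-p,\dots,n}$. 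I would classify the pairs exactly as in the proofs of Theorem \ref{propnewdif} and Proposition \ref{Gcommutes}: when both entries lie inside a single path, pairing via the flip operation, using the identities $\mathrm{sign}(\mathrm{flip}(r,k))=-\mathrm{sign}(r)$ from \eqref{flipequation2} and $r_k r_{k+1}=r'_kr'_{k+1}$ from \eqref{flipequation1}, cancels the terms; when they come from different blocks of the shuffle, either the transposition $(i,i+1)\circ\beta$ cancels the pair, or two adjacent blocks of the partition merge through a coherence isomorphism $c^{u,v}$, producing exactly the element appearing in the $\Seq$-sum for a coarser partition in $\Part{n}$.

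The main obstacle is the combined sign- and index-bookkeeping: every surviving term must either land in an extremal position (at the join with $\bar{a}_{n+1-p,\dots,n}$ or at the ends of a path, producing the $\partial_n$ merge and the $\Omega_{n-1}(\partial_{n-1}\tilde{a})$ residue needed to complete the second sum of the statement) or contribute to a refinement of the partition structure. The overall sign $(-1)^{n+1}$ in the recursive definition of $\Omega_n$ together with the $(-1)^{\bar{m}+\xi+\zeta+\beta}$ weights inside $\omega_{n,p}$ must align to reproduce the $(-1)^{\bar{m}+\xi}$ weights of $\Delta_n(\tilde{a})$. I expect this to follow the same pattern as step 4 of the proof of Theorem \ref{propnewdif}, where a flip-and-transposition matching reduced the full differential to a prescribed residue; here the residue is precisely $\Delta_n(\tilde{a})$, completing the induction.
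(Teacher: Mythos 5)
Your inductive skeleton coincides with the paper's: expand $\Omega_n$ by its recursion, apply the induction hypothesis to $\partial_n\tilde{a}$, use $\partial_i\partial_n=\partial_{n-1}\partial_i$ to cancel the nested $\Omega_{n-2}$-terms against the expanded second sum, and keep $\Delta_{n-1}(\partial_n\tilde{a})\sqcup\tilde{a}_n$ as a residue. But what this reduction leaves you with --- namely that $(-1)^{n+1}\sum_{i=0}^n(-1)^i\partial_i\omega_n(\tilde{a})$, the cross terms $\omega_{n-1}(\partial_i\tilde{a})$, the extra face $\partial_n\bigl(\omega_{n-1}(\partial_n\tilde{a})\sqcup\tilde{a}_n\bigr)$ and $\Delta_{n-1}(\partial_n\tilde{a})\sqcup\tilde{a}_n$ together assemble into $\Delta_n(\tilde{a})$ --- is precisely the content of the lemma, and you do not prove it; you only assert that it should follow ``the same pattern'' as Step 4 of Theorem \ref{propnewdif} and Proposition \ref{Gcommutes}. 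In the paper this residual identity is isolated as Lemma \ref{homomain1}, and its proof is where essentially all the work lies: one must set up an explicit matching between four families of terms, and the flip/transposition pairing you invoke only disposes of the \emph{interior} adjacent pairs. The decisive cases are the extremal faces, where the case analysis on $\underline{b}_1\in\{\tilde{a}_1,\ r^{m_t}_1,\ 1_{(L_p\sigma)[m_1']^*}\}$ and on $\underline{b}_{n+1}\in\{\tilde{a}[m_l],\ r^{m'_t}_{m'_t-1},\ 1_{u_1^*}\}$ is what consumes the cross families: the subcase $m_1'=p$ is what produces the $\Seq(\sigma,\bar{m})$-terms of $\Delta_n$, the subcase $p=1$ is what kills $\Delta_{n-1}(\partial_n\tilde{a})\sqcup\tilde{a}_n$, the subcase $m_1'<p$ pairs a $\partial_0$-face of $\omega_{n,p}$ with a top face of $\omega_{n,p-m_1'}$, and the $\underline{b}_{n+1}=r^{m'_t}_{m'_t-1}$ case is what absorbs the $\omega_{n-1}(\partial_j\tilde{a})$ terms. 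None of this is a routine replay of the earlier proofs (the tail $\bar{a}_{n+1-p,\dots,n}$ and the mixture of $\Seq$ with $\Seqq$ are new), so as it stands the proposal is a plan with the central step missing.

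There is also a concrete error in your base case. In $\langle\Lambda\rangle$ the two summands of $\Delta_1(\tilde{a})$ are \emph{different} generators: $\xi$ is $\tilde{a}$ regarded in $\tilde{\aaa}_{1_{U_0}}(A_0,u_1^*A_1)$ over the simplex $(1_{U_0})$, while $(\tilde{a}_1)$ is the original element of $\tilde{\aaa}_{u_1}(A_0,A_1)$ over $(u_1)$; hence $\Delta_1(\tilde{a})\neq 0$, and likewise $\partial_0\Omega_1$ and $\partial_1\Omega_1$ do not cancel --- their difference is exactly $\Delta_1(\tilde{a})$, which (with the convention $\Omega_0=0$) is what verifies the case $n=1$. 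This is not merely cosmetic: the whole identity lives in the free abelian group on elements distinguished by their underlying simplices via $\simp$, and the cancellation scheme in Lemma \ref{homomain1} depends on never identifying terms with different $\simp$; conflating them, as your base case does, would invalidate the bookkeeping in the inductive step as well.
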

\begin{proof}
	The equation (\ref{homoeqn1}) is equivalent to 
	\begin{align*}
\sum_{i=0}^{n-1}(-1)^i\Omega_{n-1}(\partial_i\tilde{a})=&\Delta_n(\tilde{a})-\sum_{i=0}^n(-1)^{i+n+1}\partial_i\omega_n(\tilde{a})-\sum_{i=0}^n(-1)^{i}\partial_i(\Omega_{n-1}(\tilde{a}_1,\dots,\tilde{a}_{n-1})\sqcup\tilde{a}_n). 
	\end{align*}
	Assume that the equation (\ref{homoeqn1}) holds for $n$. We now prove it holds for $n+1$. Assume $\tilde{a}=(\tilde{a}_1,\dots,\tilde{a}_{n+1})$. Let
	$$\displaystyle B=\sum_{i=0}^{n+1}(-1)^i\partial_i\Omega_{n+1}(\tilde{a}_1,\dots,\tilde{a}_{n+1}),\ \mathrm{and}\ 
	C=\sum_{i=0}^{n}(-1)^i\Omega_{n}(\partial_i(\tilde{a}_1,\dots,\tilde{a}_{n+1})).$$
	We need to prove \begin{align}\label{homoeqn2}
	B+C=\Delta(\tilde{a}_1,\dots,\tilde{a}_{n+1}).
	\end{align}
	By definition, we have 
	\begin{eqnarray*}
		B&=&\sum_{i=0}^{n+1}(-1)^{i+n+2}\partial_i\omega_{n+1}(\tilde{a}_1,\dots,\tilde{a}_{n+1})+\sum_{i=0}^{n+1}(-1)^i\partial_i(\Omega_n(\tilde{a}_1,\dots,\tilde{a}_n)\sqcup \tilde{a}_{n+1})\\
		&=&\sum_{i=0}^{n+1}(-1)^{i+n+2}\partial_i\omega_{n+1}(\tilde{a}_1,\dots,\tilde{a}_{n+1})+B_1+B_2
	\end{eqnarray*}
	where
	\begin{align*}
	B_1&=\sum_{i=0}^{n+1}(-1)^{i+n+1}\partial_i(\omega_n(\tilde{a}_1,\dots,\tilde{a}_n)\sqcup \tilde{a}_{n+1})\\
	B_2&=\sum_{i=0}^{n+1}(-1)^{i}\partial_i(\Omega_{n-1}(\tilde{a}_1,\dots,\tilde{a}_{n-1})\sqcup\tilde{a}_n\sqcup\tilde{a}_{n+1}).
	\end{align*}
	We also have 
	\begin{align*}
	&C=\sum_{i=0}^n(-1)^{i+n+1}\omega_n(\partial_i(\tilde{a}_1,\dots,\tilde{a}_{n+1}))+\sum_{i=0}^{n-1}\Omega_{n-1}(\partial_i(\tilde{a}_1,\dots,\tilde{a}_n))\sqcup\tilde{a}_{n+1}\\
	&+(-1)^n\Omega_{n-1}(\tilde{a}_1,\dots,\tilde{a}_{n-1})\sqcup\tilde{a}_{n,n+1}.
	\end{align*}
	By induction hypothesis, we have 
	\begin{align*}
	&\sum_{i=0}^{n-1}(-1)^i\Omega_{n-1}(\partial_i(\tilde{a}_1,\dots,\tilde{a}_n))\sqcup\tilde{a}_{n+1}\\
	&=\Delta_n(\tilde{a}_1,\dots,\tilde{a}_n)\sqcup\tilde{a}_{n+1}-\sum_{i=0}^n(-1)^{i+n+1}\partial_i\omega_n(\tilde{a}_1,\dots,\tilde{a}_n)\sqcup\tilde{a}_{n+1}\\
	&\ \ \ -\sum_{i=0}^n(-1)^i\partial_i(\Omega_{n-1}(\tilde{a}_1,\dots,\tilde{a}_{n-1})\sqcup\tilde{a}_n)\sqcup\tilde{a}_{n+1}\\
	&=\Delta_n(\tilde{a}_1,\dots,\tilde{a}_n)\sqcup\tilde{a}_{n+1}-(B_1-\partial_{n+1}(\omega_n(\tilde{a}_1,\dots,\tilde{a}_n)\sqcup \tilde{a}_{n+1}))\\
	&\ \ \ - (B_2-(-1)^{n+1}\Omega_{n-1}(\tilde{a}_1,\dots,\tilde{a}_{n-1})\sqcup\tilde{a}_{n,n+1} ) .
	\end{align*}
	This implies 
	\begin{align*}
	B+C&=\sum_{i=0}^{n+1}(-1)^{i+n+2}\partial_i\omega_{n+1}(\tilde{a}_1,\dots,\tilde{a}_{n+1})+\sum_{i=0}^n(-1)^{i+n+1}\omega_n(\partial_i(\tilde{a}_1,\dots,\tilde{a}_{n+1}))\\
	&\ \ \ +\partial_{n+1}(\omega_n(\tilde{a}_1,\dots,\tilde{a}_n)\sqcup \tilde{a}_{n+1})+\Delta_n(\tilde{a}_1,\dots,\tilde{a}_n)\sqcup\tilde{a}_{n+1}.
	\end{align*}
	Thus, by Lemma (\ref{homomain1}), we obtain the equation (\ref{homoeqn2}).
\end{proof}

\begin{lemma}\label{homomain1} 	Let $\sigma=(u_1,\dots,u_{n+1})$ be a simplex and  $\tilde{a}=(\tilde{a}_1,\dots,\tilde{a}_{n+1})$ as in (\ref{tildea}), then we have
	\begin{align*}
	&\sum_{i=0}^{n+1}(-1)^{i+n+2}\partial_i\omega_{n+1}(\tilde{a}_1,\dots,\tilde{a}_{n+1})+\sum_{i=0}^n(-1)^{i+n+1}\omega_n(\partial_i(\tilde{a}_1,\dots,\tilde{a}_{n+1}))\\
	&\  +\partial_{n+1}(\omega_n(\tilde{a}_1,\dots,\tilde{a}_n)\sqcup \tilde{a}_{n+1})+\Delta_n(\tilde{a}_1,\dots,\tilde{a}_n)\sqcup\tilde{a}_{n+1}
	-\Delta(\tilde{a}_1,\dots,\tilde{a}_{n+1})=0.
	\end{align*}
\end{lemma}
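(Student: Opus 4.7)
The plan is to prove this identity by a systematic expansion and term-matching argument, following exactly the same strategy as in Propositions \ref{Fcommutes} and \ref{Gcommutes} and Step 4 of Theorem \ref{propnewdif}. First I would unfold $\omega_{n+1}=\sum_{p=1}^{n+1}\omega_{n+1,p}$ so that each contribution is a sum over quadruples $(\bar{m},\xi,\bar{m}',\zeta)$ with a shuffle $\beta\in S_{n+1-p,p}$, producing the formal simplex $\xi\underset{\beta}{*}\zeta\sqcup\bar{a}_{n+2-p,\dots,n+1}$. The action of $\partial_i$ on such a simplex is to multiply two consecutive entries $\alpha_i,\alpha_{i+1}$ of its underlying formal sequence, so the proof reduces to classifying which types of adjacent pair $(\alpha_i,\alpha_{i+1})$ can occur and handling each class separately.

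The classification I would use mirrors the one in the proof of Proposition \ref{Gcommutes}. Write $\underline{\alpha}=(\underline{\alpha}_1,\dots,\underline{\alpha}_{n+1})$ for the formal sequence. The cases are: (a) both $\underline{\alpha}_i,\underline{\alpha}_{i+1}$ come from the $\Seqq$-part $\zeta$; (b) both come from the $\Seq$-part $\xi$; (c) one comes from $\xi$, the other from $\zeta$; (d) at least one of them comes from $\bar{a}_{n+2-p,\dots,n+1}$. In cases (a) and (b), the pair either consists of two path-elements from the same sub-path, of two path-elements from different sub-paths, or of an identity $1_{\sigma[m_j]^{\ast}}$ next to a path-element. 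As in Step 4 of Theorem \ref{propnewdif} and in Proposition \ref{Gcommutes}, such pairs cancel in twos: either by transposing the relevant entries of $\beta$, or by applying $\mathrm{flip}(\cdot,k)$ to a sub-path, or (for an $1_{\sigma[m_j]^{\ast}}$ next to $r^{m_j}_1=c^{\sigma[m_j],l}$) by refining the partition $\bar{m}'$ at position $m_j$ into $(\Delta',\Delta)$ and rearranging via Lemma \ref{jointpaths}. In case (c), swapping the shuffle $\beta$ at positions $i,i+1$ cancels the term pairwise. The only non-cancelling contributions are therefore in case (d) and at the two extremal positions $i=0$ and $i=n+1$.

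Second, I would match these surviving contributions with the remaining three terms on the right-hand side. The $i=n+1$ boundary, which always acts on $\bar{a}_{n+1}$, together with the case in which $\partial_i$ merges $\bar{a}_{n+2-p}$ with the last entry of $\zeta$ or $\xi$, produces exactly $\partial_{n+1}(\omega_n(\tilde{a}_1,\dots,\tilde{a}_n)\sqcup\tilde{a}_{n+1})$. Faces $\partial_i$ which merge a $\tilde{a}_j$ with an adjacent $\tilde{a}_{j+1}$ inside the shuffle recombine, after reindexing the partitions of $n-p$, into $\omega_n(\partial_j(\tilde{a}_1,\dots,\tilde{a}_{n+1}))$, giving the middle sum. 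Finally, the extremal partitions $\bar{m}=(n+1-p)$ on the $\Seq$-side (where $\xi$ is a single path, with $\tilde{a}_{1,\dots,n+1-p}$ appended at the bottom) produce both the $\Delta_{n+1}(\tilde{a})$ and the $\Delta_n(\tilde{a}_1,\dots,\tilde{a}_n)\sqcup\tilde{a}_{n+1}$ pieces, by isolating the $p=0$ and $p=1$ extremes respectively. In parallel, one has to verify that the signs $(-1)^{\bar{m}}$, $(-1)^{\xi}$, $(-1)^{\zeta}$, $(-1)^{\beta}$ and the simplicial sign $(-1)^i$ combine correctly under each identification; this is a direct inspection, identical in style to the sign verifications in Steps 1--3 of Proposition \ref{Fcommutes}.

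The main obstacle is purely organisational: the sheer number of sub-cases produced when $\partial_i$ can hit any of the $\Seq$-part, the $\Seqq$-part, the $\bar{a}$-tail, or an interface between them, together with the fact that merging two consecutive entries often forces a change both in the partition $\bar{m}$ or $\bar{m}'$ and in the associated path, so that one must use Lemma \ref{absolute} and Lemma \ref{jointpaths} to recognise the result as living in the expected $\Seq$ or $\Seqq$ set. Once the cancellation pattern inside $\xi$ and $\zeta$ is set up exactly as in the proofs of Theorem \ref{propnewdif} and Proposition \ref{Gcommutes}, each non-cancelling term is identified with a unique term on the right-hand side, and the proof closes by induction on $n$, the induction hypothesis being used only to handle the inductive definition of $\Omega_n$ via $\Omega_{n-1}(\partial_0\sigma,\partial_n\tilde{a})\sqcup\tilde{a}_n$.
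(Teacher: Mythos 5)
Your overall architecture --- expand $\omega_{n+1}=\sum_p\omega_{n+1,p}$ into terms indexed by $(\bar{m},\xi,\bar{m}',\zeta,\beta)$, classify the adjacent pair hit by each face $\partial_i$, cancel in pairs by transpositions of $\beta$, flips of paths and refinements of partitions, and match the survivors against the other four terms --- is indeed the strategy of the paper, and your internal-cancellation cases (a)--(c) are handled there exactly as you say. The genuine gap is in the identifications you propose for the surviving terms, and it sits precisely where the twist matters. An interior face that merges two adjacent $\tilde{a}$-blocks of the $\Seq$-part produces the plain composite $\underline{\tilde{a}}_j\circ\underline{\tilde{a}}_{j+1}$, which is an entry belonging to the coarser partition and therefore cancels \emph{internally} against a term of the same sum $B=\sum_i(-1)^{i+n+2}\partial_i\omega_{n+1}$ carrying an extra $c$-path element (the mechanism of Case 2, Step 2 in the proof of Proposition \ref{Fcommutes}); it does \emph{not} recombine into $\omega_n(\partial_j\tilde{a})$, because $\partial_j\tilde{a}$ composes $\tilde{a}_j$ and $\tilde{a}_{j+1}$ via the graded composition $\mu$, which inserts a twist isomorphism $c$ and yields a morphism with a different codomain. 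In the paper the middle sum is matched instead as follows: $\omega_n(\partial_0\tilde{a})$ pairs with the $i=0$ faces whose first entry is $\tilde{a}_1$, while $\omega_n(\partial_j\tilde{a})$ for $1\le j\le n$ pairs with the top face $i=n+1$ in the subcase where the last shuffle entry is the final path element $\epsilon^{\cdot,j}$ of the $\Seqq$-part: merging that natural transformation with the tail $\bar{a}_{n+2-p,\dots,n+1}$ is exactly what produces the twisted composite $\overline{(\partial_j\tilde{a})}$.

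Your routing of the remaining terms is off in the same way. The term $\partial_{n+1}(\omega_n(\tilde{a}_1,\dots,\tilde{a}_n)\sqcup\tilde{a}_{n+1})$ absorbs only the $i=n+1$ faces whose last shuffle entry is $1_{u_1^*}$ with $p>1$; the $p=1$ instances of that same subcase give $\Delta_n(\tilde{a}_1,\dots,\tilde{a}_n)\sqcup\tilde{a}_{n+1}$, and $-\Delta_{n+1}(\tilde{a})$ is matched with the $i=0$ faces dropping the leading identity $1_{(L_p\sigma)[m_1']^*}$ in the case where the $\Seqq$-partition is the single block $(p)$, for \emph{every} $p$ from $1$ to $n+1$ --- there is no ``$p=0$ extreme'' in $\omega_{n+1}$, and the leftover constants $\pm(\tilde{a}_1,\dots,\tilde{a}_{n+1})$ cancel between the two $\Delta$'s. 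Finally, the closing appeal to induction on $n$ is misplaced: the present identity contains no $\Omega$ and is proved by direct term-matching; it is Lemma \ref{lemmahomoeqn1} that runs the induction and invokes this lemma. So the plan is of the right kind, but as written the matching of the non-cancelling terms would not close; you need to reroute the identifications as above, reserving Lemma \ref{jointpaths} and the flip/refinement and transposition moves for the internal cancellations.
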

\begin{proof}
	We denote \begin{align*}
	&B=\sum_{i=0}^{n+1}(-1)^{i+n+2}\partial_i\omega_{n+1}(\tilde{a}_1,\dots,\tilde{a}_{n+1});\ \ C=\sum_{i=0}^n(-1)^{i+n+1}\omega_n(\partial_i(\tilde{a}_1,\dots,\tilde{a}_{n+1}));\\
	&D=\partial_{n+1}(\omega_n(\tilde{a}_1,\dots,\tilde{a}_n)\sqcup \tilde{a}_{n+1}); \ E=\Delta_n(\tilde{a}_1,\dots,\tilde{a}_n)\sqcup\tilde{a}_{n+1}
	-\Delta(\tilde{a}_1,\dots,\tilde{a}_{n+1}).
	\end{align*}
	We prove that each of the terms appearing in the expansion of $B$ is canceled out against a unique term in $C,D$, or $E$, and vice-versa. The cancellation is as follows: 
	$$\xymatrix{B\ar[r]\ar[d]\ar[rd]&\ar[l]C\\ D\ar[u]&\ar[lu]E}.$$
	We write 
	\begin{align*}
	&B_p=\sum_{i=0}^{n+1}(-1)^{i+n+2}\partial_i\omega_{n+1,p}(\tilde{a})\\
	&=\sum_{i=0}^{n+1}\sum_{\substack{\bar{m}\in\Part{n+1-p}\\\xi\in\Seq(R_p\sigma, R_p\tilde{a},\bar{m}) }}\sum_{\substack{\bar{m}'\in\Part{p},\beta\in S_{n+1-p,p}\\ \zeta\in \Seqq(L_p\sigma,\bar{m}') }}(-1)^{i+n+2}(-1)^{\bar{m}+\xi+\zeta+\beta}\partial_i B_p(\tilde{a},\xi,\zeta,\beta)
	\end{align*} 
	where
	$$ B_p(\tilde{a},\xi,\zeta,\beta)=\xi\underset{\beta}{*}\zeta\sqcup \bar{a}_{n+2-p,\dots, n+1}$$
	and write
	\begin{align*}
	&C_p=\sum_{i=0}^{n}(-1)^{i+n+1}\omega_{n,p}(\partial_i\tilde{a})\\
	&=\sum_{i=0}^{n}\sum_{\substack{\bar{m}\in\Part{n-p}\\\xi\in\Seq(R_p\partial_{n+1-i}\sigma, R_p\partial_i\tilde{a},\bar{m}) }}\sum_{\substack{\bar{m}'\in\Part{p},\beta\in S_{n-p,p}\\ \zeta\in \Seqq(L_p\partial_{n+1-i}\sigma,\bar{m}') }}(-1)^{i+n+1}(-1)^{\bar{m}+\xi+\zeta+\beta} C_p(\partial_i\tilde{a},\xi,\zeta,\beta)
	\end{align*} 
	where 
	$$  C_p(\partial_i\tilde{a},\xi,\zeta,\beta)=\xi\underset{\beta}{*}\zeta\sqcup \overline{(\partial_i\tilde{a})}_{n+1-p,\dots, n}.$$
	We also write
	\begin{align*}
	E_0&=-\Delta(\tilde{a}_1,\dots,\tilde{a}_{n+1})
	=\sum_{\substack{\bar{m}\in\Part{n+1}\\
			\xi\in\Seq(\sigma,\bar{m})}}(-1)^{\bar{m}+\xi+1}\xi ;\\
	E_1&=\Delta(\tilde{a}_1,\dots,\tilde{a}_{n})\sqcup \tilde{a}_{n+1}
	=\sum_{\substack{\bar{m}\in\Part{n}\\
			\xi\in\Seq(\partial_0\sigma,\bar{m})}}(-1)^{\bar{m}+\xi}\xi\sqcup\tilde{a}_{n+1}.
	\end{align*}
	and
	\begin{align*}
	&D_p=\partial_{n+1}\omega_{n,p}(\tilde{a}_1,\dots,\tilde{a}_n)\sqcup\tilde{a}_{n+1}\\
	&=\sum_{\substack{\bar{m}\in\Part{n-p}\\\xi\in\Seq(R_p\partial_0\sigma, R_p\partial_{n+1}\tilde{a},\bar{m}) }}\sum_{\substack{\bar{m}'\in\Part{p},\beta\in S_{n-p,p}\\ \zeta\in \Seqq(L_p\partial_0\sigma,\bar{m}') }}(-1)^{\bar{m}+\xi+\zeta+\beta}\partial_{n+1} D_p((\tilde{a}_1,\dots,\tilde{a}_n),\xi,\zeta,\beta)
	\end{align*} 
	where
	$$D_p(\partial_{n+1}\tilde{a},\xi,\zeta,\beta)=\xi\underset{\beta}{*}\zeta\sqcup \bar{a}_{n+1-p,\dots, n}\sqcup\tilde{a}_{n+1}.$$
	
	Assume that $\bar{m}=(m_l,\dots,m_1)\in\Part{n+1-p}$ and $\bar{m}'=(m_k',\dots,m_1')\in\Part{p}$. Let $\xi\in\Seq(R_p\sigma,\bar{m}),\ \zeta\in\Seqq(L_p\sigma,\bar{m}')$ and $\beta\in S_{n+1-p,p}$. We denote 
	$$B_p(\tilde{a},\xi,\zeta,\beta)=(b_1,\dots,b_{n+2}),$$
	and denote $(\underline{b}_1,\dots,\underline{b}_{n+2})$ the formal sequence of $B_p(\tilde{a},\xi,\zeta,\beta)$.
	
	\emph{Step 1.} Consider the case $i=0$, then $\partial_0(B_p(\tilde{a},\xi,\zeta,\beta))=(b_2,\dots,b_{n+2})$.  There are only the following three cases:
	$$\underline{b}_1=\left[\begin{matrix}
	\tilde{a}_1;&\ \ \ \ \ \ \ \ \ \ \\
	r^{m_t}_{1} & \text{ where } r^{m_t}=(r^{m_t}_{1},\dots,r^{m_t}_{m_t-1})\in \ppp(\sigma[m_t]);\\
	1_{(L_p\sigma)[m_1']^*}.&
	\end{matrix}\right.
	$$
	\begin{itemize}
		\item[(i)] Assume $\underline{b}_1=\tilde{a}_1$. Then $m_1=1$, and we choose $\tilde{m}=(m_l,\dots,m_2)\in \Part{n-p}$. There exists a unique element $\xi'\in \Seq(R_p(\partial_{n+1}\sigma),\tilde{m})$ such that $b_1\sqcup\xi'=\xi$. There exists a unique $\beta'\in S_{n-p,p}$ such that 
		$$(b_1\sqcup \xi')\underset{\beta'}{*}\zeta=\xi\underset{\beta}{*}\zeta$$
		Since $(-1)^{\tilde{m}+\xi'+\beta'}=(-1)^{\bar{m}+\xi+\beta}$, we get
		$$(-1)^{n+2}(-1)^{\bar{m}+\xi+\zeta+\beta}\partial_0B_p(\tilde{a},\xi,\zeta,\beta)+(-1)^{n+1}(-1)^{\tilde{m}+\xi'+\zeta+\beta'}C_p(\partial_0\tilde{a},\xi',\zeta,\beta')=0.$$
		\item[(ii)] Assume $\underline{b}_1=r^{m_t}_{1}$ for some $t$, where $r^{m_t}=(r^{m_t}_{1},\dots,r^{m_t}_{m_t-1})\in \ppp(\sigma[m_t])$. Then $r^{m_t}_1=c^{(R_p\sigma)[m_t],j}$ for some $j$. Set $\Delta=j,\ \Delta'= m_t-j.$		
		Using the analogous argument as in Case 2 of Step 2 in the proof of Proposition \ref{Fcommutes}, considering the partition $$\tilde{m}=(m_l,\dots,m_{t+1},\Delta,\Delta',m_{t-1},\dots,m_1)\in \Part{n+1-p}$$
		we find unique $ \xi'\in \Seq(R_p\sigma,\tilde{m})$ and $1\le j_0 \le n+1$ such that 
		$$(-1)^{n+2}(-1)^{\bar{m}+\xi+\zeta+\beta}\partial_0B_p(\tilde{a},\xi,\zeta,\beta)+(-1)^{j_0+n+2}(-1)^{\tilde{m}+\xi'+\zeta+\beta}\partial_{j_0}B_p(\tilde{a},\xi,\zeta,\beta)=0.$$
		\item[(iii)] Assume $\underline{b}_1=1_{(L_p\sigma)[m_1']^*}$. \\
		If $m'_1<p$, choose $\tilde{m}'=(m_k',\dots,m_2')\in \Part{p-m_1'}$ and $\tilde{m}=(m_1',m_l,\dots,m_1)\in\Part{n+1-p+m_1'}$. There exists unique $\xi'\in\Seq(R_{p-m_1'}\sigma,\tilde{m}),\ \zeta'\in \Seqq(L_{p-m_1'}\sigma,\tilde{m}')$ and $\beta'\in S_{n+1-p+m_1',p-m_1'}$ such that 
		$$\xi'\underset{\beta'}{*}\zeta'\sqcup \bar{a}_{n+2-p+m_1',\dots,n+1}=(b_2,\dots,b_{n+1})\sqcup\bar{a}_{n+2-p,\dots,n+1-p+m_1'}\sqcup \bar{a}_{n+2-p+m_1',\dots,n+1}$$
		so we get  $$\partial_{n+1}(\xi'\underset{\beta'}{*}\zeta'\sqcup \bar{a}_{n+2-p+m_1',\dots,n+1})=(b_2,\dots,b_{n+2}).$$ 
		By a sign computations, we obtain 
		$$(-1)^{n+2}(-1)^{\bar{m}+\xi+\zeta+\beta}\partial_0B_p(\tilde{a},\xi,\zeta,\beta)-(-1)^{\tilde{m}+\xi'+\zeta'+\beta'}\partial_{n+1}B_{p-m_1'}(\tilde{a},\xi',\zeta',\beta')=0.$$
		If $m_1'=p$, then $\bar{m}'=(m_1')\in\Part{p}$ and thus $\simp(b_i)=(U_0\overset{1}{\lra}U_0)$ for $i=2,\dots,(n+1)$. Let $\tilde{m}=(m_1',m_l,\dots,m_1)\in \Part{n+1}$. It is seen that $\xi'=(b_2,\dots,b_{n+1})\in\Seq(\sigma,\tilde{m})$, by a sign computation we have $$(-1)^{n+2}(-1)^{\bar{m}+\xi+\zeta+\beta}\partial_0B_p(\tilde{a},\xi,\zeta,\beta)=(-1)^{\tilde{m}+\xi'}\xi'.$$
		Hence the term $(-1)^{n+2}(-1)^{\bar{m}+\xi+\zeta+\beta}\partial_0B_p(\tilde{a},\xi,\zeta,\beta)$ is killed by the term $(-1)^{1+\tilde{m}+\xi'}\xi'$ in $E_0$. In this way, when $p$ runs through $\{1,\dots,n+1\}$, every term in $E_0$ is eliminated except the term $-(\tilde{a}_1,\dots,\tilde{a}_{n+1})$ which is eliminated by the term $(\tilde{a}_1,\dots,\tilde{a}_{n})\sqcup\tilde{a}_{n+1}$ in $E_1$.
	\end{itemize}
	
	\emph{Step 2.} Consider the case $1\le i\le n$. We write $\xi=(\xi_1,\dots,\xi_{n+1-p})$ and $\zeta=(\zeta_1,\dots,\zeta_p)$. We have 
	$$\partial_iB_p(\tilde{a},\xi,\zeta,\beta)=(b_1,\dots,b_{i-1},\mu(b_i,b_{i+1}),b_{i+2},\dots,b_{n+2}).$$
	There are only the following three cases:
	$$\left[\begin{matrix}
	&\{\underline{b}_i,\underline{b}_{i+1}\}=\{\xi_j,\zeta_{j'}\} \text{ for some } j,j';\\
	&\{\underline{b}_i,\underline{b}_{i+1}\} \subseteq \{\xi_1,\dots,\xi_{n+1-p}\};\ \ \ \ \ \ \ \ \\
	&\{\underline{b}_i,\underline{b}_{i+1}\} \subseteq \{\zeta_1,\dots,\zeta_p\}.\ \ \ \ \ \ \ \ \ \ \ \ \ \ 
	\end{matrix}\right.
	$$
	\begin{itemize}
		\item Assume $\{\underline{b}_i,\underline{b}_{i+1}\}=\{\xi_j,\zeta_{j'}\}$. Choose $\beta'=(i,i+1)\circ\beta$ then
		$$(-1)^{i+n+2}(-1)^{\bar{m}+\xi+\zeta+\beta}\partial_iB_p(\tilde{a},\xi,\zeta,\beta)+(-1)^{i+n+2}(-1)^{\bar{m}+\xi+\zeta+\beta'}\partial_iB_p(\tilde{a},\xi,\zeta,\beta')=0.$$
		\item Assume $\{\underline{b}_i,\underline{b}_{i+1}\} \subseteq \{\xi_1,\dots,\xi_{n+1-p}\}$. We repeat the arguments of Step 2 in the Proposition \ref{Fcommutes}.
		\item Assume $\{\underline{b}_i,\underline{b}_{i+1}\} \subseteq \{\zeta_1,\dots,\zeta_p\}$. We repeat the arguments of Step 3 in the Proposition \ref{Gcommutes}.
	\end{itemize}
	
	\emph{Step 3.} Consider the case $i=n+1$. We have
	\begin{align*}
	&\partial_{n+1}B_p(\tilde{a},\xi,\zeta,\beta)=\partial_{n+1}((b_1,\dots,b_{n+1})\sqcup\bar{a}_{n+2-p,\dots,n+1})\\
	&=(b_1,\dots,b_n,\mu(b_{n+1},\bar{a}_{n+2-p,\dots,n+1})).
	\end{align*}
	There are only the following three cases for $\underline{b}_{n+1}$:
	$$\underline{b}_{n+1}=\left[\begin{matrix}
	\tilde{a}[m_l];&\\
	r^{m'_t}_{m'_t-1}& \text{ where } r^{m'_t}=(r^{m'_t}_{1},\dots,r^{m'_t}_{m'_t-1})\in\ppp(L_p\sigma[m'_t]);\\
	1_{(L_p\sigma)[m'_k]^*} &\text{ where } m'_k=1 \text{ and } \bar{m}'=(1,m'_{k-1},\dots,m'_1)\in\Part{p}.
	\end{matrix}\right.$$
	\begin{itemize}
		\item Assume $\underline{b}_{n+1}=\tilde{a}[m_l]$. We apply the argument in (iii) of Step 1, then every term of this form is killed.
		
		\item Assume $\underline{b}_{n+1}=r^{m'_t}_{m'_t-1}$. We assume that $r^{m'_t}_{m'_t-1}=\epsilon^{L_p\sigma[m'_t],j}$ for some $j$. Then 
		$$\mu(r^{m'_t}_{1},\bar{a}_{n+2-p,\dots,n+1})=\overline{(\partial_j\tilde{a})}_{n+2-p,\dots,n+1}.$$
		In $C$ we consider terms $C_p(\partial_j\tilde{a},\xi',\zeta',\beta')$. There exists unique $(\xi',\zeta',\beta')$ such that 
		$$-(-1)^{\bar{m}+\xi+\zeta+\beta}\partial_{n+1}B_p(\tilde{a},\xi,\zeta,\beta)+(-1)^{n+1+j}(-1)^{\tilde{m}+\xi'+\zeta'+\beta'}C_p(\partial_j\tilde{a},\xi',\zeta',\beta')=0$$
		Combining with Step 2 and (i) in Step 1, we see that every term in $C$ is killed.
		\item Assume that $\underline{b}_{n+1}=1_{(L_p\sigma)[m'_k]^*}$ where $\bar{m}'=(1,m'_{k-1},\dots,m'_1)\in\Part{p}$. Thus we have $\underline{b}_{n+1}=1_{u_1^*}$ and $\simp(b_{n+1})=(U_0\overset{u_1}{\lra}U_1)$.\\
		 If $p=1$, then we have $\bar{m}'=(1)$, $\zeta=1_{L_p\sigma[1]^*}=b_{n+1}$, $\beta=1$ and $\simp(b_i)=(U_1\overset{1}{\lra}U_1)$ for $i\le n$. So $\partial_{n+1}B_1(\tilde{a},\xi,\zeta,\beta)=\xi\sqcup \tilde{a}_{n+1}$. We have $(-1)^{\bar{m}+\xi}\xi\sqcup \tilde{a}_{n+1}$ is a term in $E_1$ and 
		$$-(-1)^{\bar{m}+\xi+\zeta+\beta}\partial_{n+1}B_1(\tilde{a},\xi,\zeta,\beta)+(-1)^{\bar{m}+\xi}\xi\sqcup \tilde{a}_{n+1}=0.$$
		So we see that  every term in $E_1$ is killed.\\
		If $p>1$, recall that $\bar{m}=(m_l,\dots,m_1)$. We show that  the term
		$$-(-1)^{\bar{m}+\xi+\zeta+\beta}\partial_{n+1}B_p(\tilde{a},\xi,\zeta,\beta)$$ is killed by a term in $D$. Thus in the expression of $D_p$, we choose $\tilde{m}'=(m_{k-1}',\dots,m_1')\in\Part{p-1}$, $\tilde{m}=\bar{m}\in \Part{n+1-p}$, $\xi'=\xi$ and $\zeta'=(\zeta_1,\dots,\zeta_{p-1})$. There exists a unique $\beta'\in S_{n+1-p,p-1}$ such that
		$$-(-1)^{\bar{m}+\xi+\zeta+\beta}\partial_{n+1}B_p(\tilde{a},\xi,\zeta,\beta)+(-1)^{\tilde{m}+\xi'+\zeta'+\beta'}D_{p-1}(\partial_{n+1}\tilde{a},\xi',\zeta',\beta')=0.$$
		When $p$ varies, we see that every term in $D$ is killed.
	\end{itemize}
	
\end{proof}

\begin{proposition}\label{propGF}
	Let $\phi$ be a normalized reduced cochain in $\cgsnr{n}$ then we have $$\GGG\fff(\phi)=\phi.$$
\end{proposition}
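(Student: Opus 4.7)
The plan is to expand $(\GGG\fff\phi)^\sigma(a)$ term by term and show that normalization and reducedness of $\phi$ collapse the sum to the single ``diagonal'' term $\phi^\sigma(a)$. Fix a non-degenerate $p$-simplex $\sigma$ and a tuple $a=(a_1,\dots,a_q)$ of composable morphisms in $\aaa(U_p)$ with $p+q=n$. Unfolding the definitions produces an expression
\[
(\GGG\fff\phi)^\sigma(a) \;=\; \sum_{\bar m,\zeta,\beta}\sum_{r,\bar m',\xi}(-1)^{\text{combined signs}}\,(\text{twist prefactor})\cdot\phi_{r,n-r}^{L_r\tau}(\ldots),
\]
indexed by $\bar m\in\Part{p}$, $\zeta\in\Seqq(\sigma,\bar m)$, $\beta\in S_{q,p}$ (from $\GGG$), together with $0\le r\le n$, $\bar m'\in\Part{n-r}$, and $\xi\in\Seq(R_r\tau,\bar m')$ (from $\fff_r$), where $\tau=\simp(a\underset{\beta}{\ast}\zeta)$. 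The goal is to show that exactly one configuration survives.

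The key structural observation is that $\tau$ is typically riddled with identity edges. Every edge of $\tau$ coming from an $a_i$ is labelled by $1_{U_p}$, and within every block $\bar c^l$ of $\zeta$ only the first edge $\hat b^l_1$ carries a nontrivial label $\|\sigma[m_l]\|$; the remaining $\gamma_l-1$ edges are all labelled $1$. Since $\phi$ is reduced, $\phi_{r,n-r}^{L_r\tau}$ vanishes the moment $L_r\tau$ contains any such identity edge. This pins the $k=\#\bar m$ nontrivial edges of $\tau$ into positions $1,\dots,r$ with $r=k$, and forces $\beta$ to place the entirety of $\zeta$ first and all of $a$ last. Thereafter, normalization of $\phi$ removes every remaining term in which any argument of $\phi_{r,n-r}$ is an identity natural transformation or identity morphism: this kills every $\bar m$ having a block of size $m_l>1$ (such a block contributes an identity $1_{\sigma[m_l]^\ast}$ via $\bar r^l$) and every $\bar m'$ other than the trivial one.

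With $\bar m=(1,\dots,1)\in\Part{p}$, $\bar m'=(n-r)\in\Part{n-r}$, $r=p$, and $\beta$ the unique shuffle placing $\zeta$ first, there remain two implicit paths - one in $\ppp(\sigma)$ from the conditioned shuffle $\zeta$ and one in $\ppp(L_p\tau)=\ppp(\sigma)$ from $\xi$ - but by Lemma~\ref{absolute} both define the same isomorphism, and by the coherence \eqref{compatibility} of the twists $c^{u,v}$ the twist prefactor in $\fff_p$ cancels the twist built into $\zeta$. What remains is exactly $\phi^\sigma(a)$, and the accumulated signs collapse to $+1$.

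The main obstacle will be sign bookkeeping and verifying that the matchings above are bijective on indexing data. Several sources of signs interact: $(-1)^{\bar m},(-1)^{\bar m'}$ from partitions, $(-1)^\zeta,(-1)^\xi$ from the inductive definitions of $\Seqq$ and $\Seq$, the shuffle sign $(-1)^\beta$, and the path signs $(-1)^r$. Moreover, some non-surviving contributions are not individually zero but must cancel in matched pairs via sign-reversing involutions on the indexing data: swapping adjacent entries drawn from distinct blocks of a shuffle, applying the path-flip of \eqref{flipequation2} to an adjacent pair of natural transformations inside a single path, or regrouping adjacent identity edges across blocks. The argument is a careful analogue, for the composite $\GGG\fff$, of the matching-pair arguments already carried out in the proofs of Theorem~\ref{propnewdif} and Proposition~\ref{Gcommutes}, but it must be coordinated simultaneously across both the $\GGG$-layer and the $\fff$-layer of combinatorial data.
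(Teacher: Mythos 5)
Your general strategy (expand $(\GGG\fff\phi)^\sigma(a)$ over both layers of combinatorial data and let normalization and reducedness of $\phi$ collapse the sum to the diagonal term) is the right one, and it is the paper's. But two of your structural claims are wrong, and they sit exactly where the work of the proof lies. First, you assert that ``some non-surviving contributions are not individually zero but must cancel in matched pairs via sign-reversing involutions'' (adjacent swaps, the flip of \eqref{flipequation2}, regroupings), and you defer this matching as the main obstacle. In fact no pairwise cancellation occurs and none is needed: for $i>p$ the simplex $L_i(\simp(a\underset{\beta}{*}\zeta))$ necessarily contains an identity edge of $\uuu$, so reducedness kills the term outright; for $i<p$ one checks that $R_i(\simp(a\underset{\beta}{*}\zeta))$ still carries a nontrivial edge, and then every $\xi$ one can form is normal --- either a path component is of the form $c^{1,x}=c^{x,1}=1$, or an identity entry $1_{\sigma[m_l]^*}$ of $\zeta$ survives (possibly composed only with other such identities) as an argument --- so normalization kills the term; and for $i=p$ nondegeneracy of $L_p$ forces $\bar m'=(1,\dots,1)$ and the trivial shuffle, after which normalization forces the $\fff$-side partition to be $(1,\dots,1)$ as well. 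Every off-diagonal term vanishes individually; that is precisely why this proposition, unlike Theorem \ref{propnewdif} and Propositions \ref{Fcommutes}, \ref{Gcommutes}, needs no involution bookkeeping. Since your proposal asserts the contrary and never executes the cancellation scheme, as written it is not a complete proof.

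Second, your description of the surviving diagonal term is off. You take the $\fff$-side partition to be the single block $(n-r)$, but that choice produces a $\xi$ containing path components of an all-identity simplex, hence identity arguments, and is killed by normalization (for $q\geq 2$); the surviving partition is $(1,\dots,1)\in\Part{n-p}$. In the surviving configuration all blocks on both layers have size one, so $\zeta$ carries no paths at all (its entries are the $1_{u_j^*}$), $\xi$ is just $(a_1,\dots,a_q)$ with trivial restrictions, the prefactor $c^{\tau,p,A}(L_p\tau)^*c^{R_p\tau,\bar m,A}$ involves only twists $c^{u,v}$ with $u$ or $v$ an identity and hence equals $1$ by the unitality requirement in Definition \ref{defpseudo}, and every sign is $+1$. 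So there are no ``two implicit paths'' to compare, and neither Lemma \ref{absolute} nor the coherence \eqref{compatibility} enters; the twist-cancellation mechanism you envisage is not how $\GGG\fff(\phi)=\phi$ comes out, and pursuing it would have you computing a term that is in fact zero.
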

\begin{proof}
	Assume that  $p+q=n$. Let $\sigma=(u_1,\dots,u_p)$ be a $p$-simplex as in (\ref{eqsigma0}) and let $A_0,A_1,\dots,A_q\in \mathrm{Ob}(\aaa(U_p))$. Take $a=(a_1,\dots,a_q)$ where $a_i\in\aaa(U_p)(A_{n-i},A_{n+1-i})$ as in (\ref{equationa0}), we show that 
	\begin{equation}
	(\GGG\fff(\phi))^\sigma(a)=\phi^\sigma(a).
	\end{equation} 
	We have 
	\begin{align*}
	(\GGG\fff\phi)^\sigma(a)&=\sum_{i=0}^n(\GGG\fff_i\phi)^\sigma(a)\\
	&=\sum_{i=0}^n\sum_{\substack{\bar{m}'\in\Part{p},\beta\in S_{p,q}\\ \zeta\in\Seqq(\sigma,\bar{m}') }}(-1)^{\beta+\zeta}(\fff_i\phi)^{\simp({a}\underset{\beta}{*}\zeta)}({a}\underset{\beta}{*}\zeta)
	\end{align*}
	Denoting $\tilde{b}=({a}\underset{\beta}{*}\zeta)$, we have
	\begin{align*}
	&(\fff_i\phi)^{\simp({a}\underset{\beta}{*}\zeta)}({a}\underset{\beta}{*}\zeta)
	=\sum_{\substack{\bar{m}\in\Part{n-i}\\\xi\in \Seq(R_i\simp(\tilde{b}),\bar{m})}}(-1)^{\bar{m}+\xi}\fff_i^{\simp(\tilde{b}),\bar{m},A_n}\phi^{L_i(\simp(\tilde{b}))}(\xi)\tilde{b}_{n+1-i,\dots,n}.
	\end{align*}
	\begin{itemize}
		\item When $i>p$, then $L_i(\simp(\tilde{b}))$ is degenerate, so $\phi^{L_i(\simp(\tilde{b}))}=0$, and thus $(\GGG\fff_i\phi)^\sigma(a)=0$.
		\item When $i<p$, then it is seen that $\xi$ is normal, so $\phi^{L_i(\simp(\tilde{b}))}(\xi)=0$, and thus $(\GGG\fff_i\phi)^\sigma(a)=0$.\\
		\item We have $(\GGG\fff\phi)^\sigma(a)=(\GGG\fff_p\phi)^\sigma(a)$. $L_p(\simp(\tilde{b}))$ is non-degenerate if and only if $\bar{m}'=(1,1,\dots,1)$ and $\beta=1$. Then $L_p(\simp(\tilde{b}))=\sigma$, we have that $\xi$ is not normal if and only if $\bar{m}=(1,1,\dots,1)$. Thus we get $(\GGG\fff_p\phi)^\sigma(a)=\phi^\sigma(a)$.
	\end{itemize}

\end{proof}

\def\cprime{$'$} \def\cprime{$'$}
\providecommand{\bysame}{\leavevmode\hbox to3em{\hrulefill}\thinspace}
\providecommand{\MR}{\relax\ifhmode\unskip\space\fi MR }
\providecommand{\MRhref}[2]{%
  \href{http://www.ams.org/mathscinet-getitem?mr=#1}{#2}
}
\providecommand{\href}[2]{#2}

\end{document}